%%%%%%%%%%%%%%%%%%%%%%%%%%%%%%
%        USE LATEX           %
%%%%%%%%%%%%%%%%%%%%%%%%%%%%%%
\documentclass[letterpaper,12pt]{amsart}
\advance\oddsidemargin by-0.5in
\advance\evensidemargin by-0.5in
\advance\textwidth by 1in
\usepackage{setspace,graphicx,color,amsmath,amssymb,mathptmx,amsthm}
\usepackage[bookmarks=false]{hyperref}
\newcommand{\E}[2]{#1^{(#2)}}
\newcommand{\gateaux}{G\^ateaux}
\newcommand{\frechet}{Fr\'echet}
\newcommand{\real}{\mathbb{R}}
\newcommand{\R}{\mathbb{R}}
\newcommand{\mT}{\mathbb{T}}
\newcommand{\G}{O}
\newcommand{\Z}{\mathbb{Z}}
\newcommand{\e}{\varepsilon}
\newcommand{\ball}[2]{B_{#2}(#1)}
\newcommand{\ballcl}[2]{\overline{B}_{#2}(#1)}
\newcommand{\T}{T}
\newcommand{\limx}{x_\infty}
\newcommand{\lime}{e_\infty}
\renewcommand{\phi}{\varphi}
\newtheorem{theorem}[subsection]{Theorem}

\newtheorem{lemma}[subsection]{Lemma}
\newtheorem*{lemma*}{Lemma}
\newtheorem{definition}[subsection]{Definition}

\theoremstyle{definition}
\newtheorem{remark}[subsection]{Remark}
\newtheorem*{remark*}{Remark}
\numberwithin{equation}{section}

%%%%%%%%%%%%%%%%%%%%%%%%%%%%%%%%%%%%%%%%%%%%%%%%%%%%%%%%%%%
\newcommand{\bib}{
}
%%%%%%%%%%%%%%%%%%%%%%%%%%%%%%%%%%%%%%%%%%%%%%%%%%%%%%%%%%%
\begin{document}
\subjclass[2000]{Primary 46G05; Secondary 46T20}
\advance\baselineskip by3pt
\title{A universal differentiability set in Banach spaces with separable dual}
\author{Michael Dor\'e}
\address{School of Mathematics, University of Birmingham, Edgbaston, Birmingham B15 2TT, UK}
\email{M.J.Dore@bham.ac.uk}
\author{Olga Maleva}
\address{School of Mathematics, University of Birmingham, Edgbaston, Birmingham B15 2TT, UK}
\email{O.Maleva@bham.ac.uk}
\thanks{Michael Dor\'e acknowledges the support of the UK EPSRC PhD+ programme. Olga Maleva acknowledges the support of the EPSRC grant
EP/H43004/1.}
\begin{abstract}
We show that any non-zero Banach space with a separable dual
contains a totally disconnected, closed and bounded subset $S$ of Hausdorff dimension $1$
such that every Lipschitz function on the space is
Fr\'echet differentiable somewhere in $S$.
\end{abstract}

\maketitle
\pagestyle{myheadings}
\markboth{A UNIVERSAL DIFFERENTIABILITY SET}{MICHAEL DOR\'E and OLGA MALEVA}

\section{Introduction}
\label{intro}\label{sec.intro}
It is well known that
there are quite strong results ensuring the existence of points of differentiability of Lipschitz functions defined on
finite and infinite dimensional Banach spaces.
Rademacher's theorem implies that real-valued Lipschitz functions on finite dimensional spaces are differentiable almost everywhere in the sense of Lebesgue measure.  For the infinite dimensional case, Preiss shows in \cite[Theorem 2.5]{P} that
every real-valued Lipschitz function defined on an Asplund\footnote{This is best possible as
any non-Asplund space has an equivalent norm
--- which of course is a Lipschitz function --- that is nowhere \frechet{}
differentiable; see \cite{Asp,BL}.} space
is \frechet{} differentiable at a dense set of points.

A natural question then arises as to whether every ``small'' set $S$
in a finite dimensional or infinite dimensional Asplund space $Y$
gives rise to a real-valued Lipschitz function on $Y$ not differentiable
at any point of $S$.
Let us call a subset $E$ of the space $Y$ a universal differentiability
set if for every Lipschitz function $f:Y\to\real$, there exists $y\in E$ such that
$f$ is \frechet{} differentiable at $y$.

In this paper we show that for non-zero separable Asplund spaces $Y$, there are always
``small'' subsets with the universal differentiability property, in the sense that the Hausdorff dimension of the closure of the set can be taken equal to $1$. Hence, as we may also take the set to be bounded, in the case in which $Y$ is a finite dimensional space of dimension at least $2$ we recover the fact that a universal differentiability set may be taken to be compact and with Lebesgue measure zero, a fact first proved by the authors in~\cite{DM}.

In the case $\dim Y=1$ it is easy to show that every
Lebesgue null subset of $\real$ is not a universal differentiability set;
see \cite{Z} and \cite{FT}.  Note also that a separable Asplund space is simply a Banach space with a separable dual.
For non-separable spaces $Y$, any set $S$ of finite Hausdorff dimension
is contained in a separable subspace
$Y'\subseteq Y$; therefore the distance function
$y \mapsto \text{dist}(y,Y')$ is Lipschitz and nowhere differentiable on $S$.

We note here that for Lipschitz mappings whose codomain has dimension $2$ or above, there are  many
open questions. For example, while  Rademacher's  Theorem
still guarantees
that for every $n\ge m\ge2$ the set of points where a Lipschitz mapping
$f:\real^n\to\real^m$ is not differentiable has Lebesgue measure zero,
the answer to the question of
whether there are Lebesgue null sets in $\R^n$ containing a differentiability point of every Lipschitz $f \colon \R^n \to \R^m$
is known only for $m=2$. The answer for $n=m=2$ is negative; see \cite{ACP}. The case $n>m=2$ is a topic of a forthcoming
paper \cite{DM4} where the authors, building on methods
developed in \cite{LPT} in their
study of differentiability problems in infinite dimensional Banach spaces,
construct null universal differentiability sets for planar valued Lipschitz
functions.

No similar positive results are known in the case in which the dimension of the codomain is at least $3$. However, a partial result was obtained in \cite{PH} where it is proved that the
union $H$ of all ``rational hyperplanes'' in $\real^n$ has the property that
for every $\e>0$ and every Lipschitz mapping $f:\real^n\to\real^{n-1}$
there is a point in $H$ where the function $f$ is $\e$-\frechet{}
differentiable. Unfortunately, this is a weaker notion, and the existence of points
of $\e$-\frechet{} differentiability does not imply the existence of points
of full differentiability. See also \cite{JLPS,LP}, in which the notion of
$\e$-\frechet{} differentiability is studied with the emphasis on the infinite dimensional case.

It follows from the work of Preiss in \cite{P}
that Lebesgue null universal differentiability sets exist in any Euclidean space of dimension at least $2$.
However there is a drawback
in the construction by Preiss: any set $S$ covered by \cite[Theorem 6.4]{P}
is dense in the whole space, and simple refinements of the same approach are only capable of constructing universal differentiability sets that are still dense in some non-empty open set.
This can be explained as follows. The proof in~\cite{P} makes essential use of the following sufficient condition for $S$ to be a universal differentiability set:
$S$ is $G_{\delta}$ and for every $x \in S$ and $\e>0$,
there is a $\delta$-neighbourhood $N$ of $x$, for some $\delta = \delta(\e,x) > 0$, such that for every line segment $I \subseteq N$,
the set contains a large portion
of a path that approximates $I$ to within $\e|I|$. Fixing $\e = 1/2$ say, a simple application of the Baire category theorem shows that one can choose $\delta(1/2,x)$ uniformly
for $x$ belonging to some non-empty open $U$. It quickly follows that $S$ itself is dense in $U$.
See also \cite[Introduction]{DM} for a discussion of this.

In \cite{DM2} we improve the result of \cite[Corollary 6.5]{P}
by constructing,
in every finite dimensional space,
a compact universal differentiability set that has  Hausdorff dimension $1$.

The main result of the present paper is that
every non-zero Banach space with separable dual
contains a closed and bounded universal differentiability
set of Hausdorff dimension $1$; see Theorem~\ref{th.main-part1},
Remark~\ref{rem.wedge},
Lemma~\ref{lem.main-cl}
and Theorem~\ref{th.UDtot}.
The dimension $1$ here is optimal; see Lemma~\ref{lem.hausd}.
The universal differentiability set need not contain any non-constant
continuous curves; in Theorem~\ref{th.UDtot}
we show that this set may in fact be chosen to be totally disconnected.
In the case in which $Y$ is a finite dimensional space, this result implies
the earlier result of \cite{DM2}.
Note that compact subsets of infinite dimensional spaces cannot have the
universal differentiability property; indeed if $S \subseteq Y$ is compact then
one may even construct a Lipschitz
convex function $f \colon Y\to \R$ not Fr\'echet differentiable on $S$, for example
$$
f(y) := \text{dist}(y,{\text{convex hull}(S)}).
$$
See also remark after Lemma~\ref{perturbcompacts}.

The proof of  Lemma~\ref{lem.main-cl}
is based on Theorems~\ref{th.main-part1} and~\ref{th.passtoclosed}, which rely on
Sections~\ref{Differentiability},
\ref{Optimisation}
and~\ref{Set_theory}.
Section~\ref{Set_theory} gives details of the construction of the set.
Section~\ref{Optimisation} explains the procedure for finding the point
with almost locally maximal directional derivative. Finally, Section~\ref{Differentiability} proves any such point is a point of
\frechet{} differentiability.

Assume we have a closed set $S$ and that we aim to prove $S$ has the universal
differentiability property. We describe the details of the
construction of $S$ below; at the
moment we just say $S$ is going to be defined using a Souslin-like operation
on a family of closed ``tubes'', that is
closed neighbourhoods of particular line segments.
Consider an arbitrary
Lipschitz function $f:Y\to\real$; we would like to show $f$ is \frechet{} differentiable
at some point of $S$.
The strategy is to, in some sense, almost locally maximise the
directional derivative of $f$;
this is done in Theorem~\ref{th.incr},
 from within the constructed family of tubes. We then use the Differentiability Lemma~\ref{lem.diff},
which gives a sufficient condition
for the Lipschitz function to be \frechet{} differentiable at a point
where it has such an `almost locally maximal' directional derivative.

In Section~\ref{Differentiability} we prove that if a Lipschitz function
$f$ has a
directional derivative $L$ at some point $y\in S$, and this derivative
is almost locally maximal in the sense that for every $\e$,
every directional derivative at \textit{any} nearby point from $S$ does not
exceed $L+\e$, then the Lipschitz function is in fact
\frechet{} differentiable at the original point
and the gradient is in the direction $e$ of the almost locally maximal
directional  derivative.
The word \textit{any} in the latter sentence needs in fact to be replaced by a special condition \eqref{eq.condi}; see Lemma~\ref{lem.max} and Lemma~\ref{lem.diff}.
The proof is then based on the idea that, assuming non-\frechet{} differentiability, we can find a wedge --- that is a specially chosen union of two line segments --- in which the growth of the function contradicts the mean value theorem and the local maximality assumption.

In Section~\ref{Optimisation} we show how to find such point with
`almost locally maximal' directional derivative.
The idea behind the proof is to take a sequence of pairs $(y_n,e_n)$ with the directional derivative $g'(y_n,e_n)$ being very close
to the supremum over all directional derivatives $g'(z,u)$
with $z$ close to $y_{n-1}$ and $(z,u)$ satisfying certain additional constraints -
see Definition~\ref{defnweight} and inequality~\eqref{eq.defxnen} -
and to argue that the sequence
$(y_n,e_n)$ converges to a point-direction pair $(y,e)$
with the desired
almost locally maximal directional derivative.

The optimisation method used in the present paper develops ideas from \cite{P}
and  \cite{DM}. The new idea that we use in this paper is that
instead of looking at points $y\in Y$, we define a bundle $X$ over $Y$, where $X$ is a complete topological space and $\pi \colon X \to Y$ is a continuous mapping, and locally maximise the directional
derivative $f'(\pi x, e)$ over $x \in X$.
This ensures that during the optimisation iterative
procedure we are not thrown to the boundary of the set; if $\pi(X) \subseteq S$ then we are guaranteed that the point we obtain lies inside $S$.

Another key aspect of the proof of our result is the new set theoretic construction; see Theorem~\ref{th.passtoclosed} and Section~\ref{Set_theory}.
First of all, we need to remark that the limit point to be obtained as a
result
of optimisation procedure must not be a porosity point of the set --- see the next section for the definition and reasons. We achieve this by
constructing a set in which, for every point $x$ and every $\e>0$, sufficiently
small $\delta$-neighbourhoods of $x$ contain  an $\e \delta$-dense set of line segments. The set is defined as an
intersection of a countable collection $(J_k(\lambda))_{k \geq 1}$ of closed sets. Each $J_k(\lambda)$ is in its turn a countable union of ``tubes'',  which are
closed neighbourhoods of  particular line segments.
The construction of $J_k(\lambda)$ is inductive: around every tube in $J_l(\lambda)$ with $l < k$ we add a fine collection of tubes to $J_{k}(\lambda)$
and replace the original tube with a more narrow tube around the same
line segment.

As we are aiming for a final set of Hausdorff dimension $1$, we need to ensure the widths of the tubes in $J_k(\lambda)$ tend to $0$ as $k \to \infty$. More precisely,
we fix upfront a $G_\delta$ set $\G$
of Hausdorff dimension $1$ containing a dense set of straight line segments,
 and a nested collection of open sets $\G_k$ with intersection $\G$.
By constructing $J_k(\lambda) \subseteq \G_k$ we thereby ensure that
$\T_\lambda=\bigcap_{k \geq 1} J_k(\lambda)$
has Hausdorff dimension at most $1$, as required. As $J_k(\lambda)$ are closed sets, so then is $T_{\lambda}$.

The parameter $\lambda\in(0,1)$ is used to
change the widths of all tubes involved in tube sets
$J_k(\lambda)$ proportionally,
multiplying by $\lambda$. We then establish that if $\lambda_1<\lambda_2$ are fixed and
we pick an arbitrary point $y\in\T_{\lambda_1}$, then for each $\e>0$
every sufficiently small $\delta$-neighbourhood of $y$ has an
$\e\delta$-dense set of line segments that are fully
inside $T_{\lambda_2}$.
In order to achieve this we first find the level $N$ after which,
in the construction of tube sets  $J_k(\lambda)$ we were
choosing new tubes with density finer than
$\e$ multiplied by the width of the tube on the previous level.
Choose $\delta$ to be smaller than the width of a tube on the level $N$
and set $n\ge N$ to be the ``critical'' level on which the width of the
tube containing point $x$ multiplied by $\lambda_2-\lambda_1$
for the first time becomes less than $\delta$. Then
the whole $\delta$-neighbourhood of $x$ is
guaranteed to be inside the tube sets $J_m(\lambda_2)$, with $m\le n-1$.
For $m\ge n+1$ we find that the new tubes go $\e w_m$-densely around $x$,
where $w_m$ is the width on the tube on level $m$. Since
$w_m\le\e w_{m-1}\le\e\delta$ by construction, we  find
many tubes $\e\delta$-close to $x$ on those subsequent levels. The problem that
remains is that on the level $n$ itself we might not find an appropriate tube at
all! We overcome this obstacle by slightly modifying the definition of
$J_k(\lambda)$ and taking it to be the union of tubes on a number of
levels so that the ``one level shift'' does not take us outside the
tube set $J_k(\lambda)$.

\begin{figure}[h]
\label{fig}
\begin{center}
\input{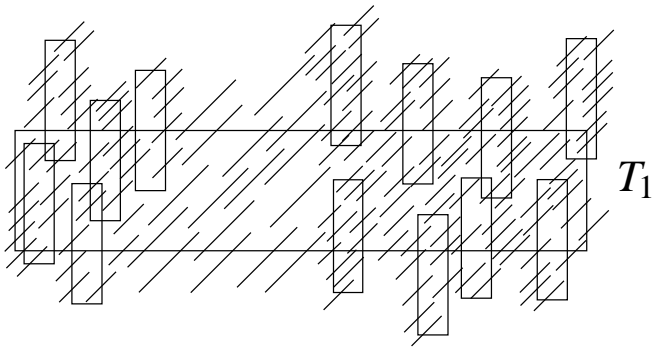}
\caption{We show here a horizontal tube $T_1$ of level $1$, vertical tubes
of level $2$ that approximate points from $T_1$ and ``diagonal'' tubes of level $3$ that
approximate points from tubes of level $2$ and points from $T_1$.}
\end{center}
\end{figure}

There is extra problem in the infinite dimensional case however.
Given a tube $T$ of width $w$ in one of the tube sets $J_k$,
in order to ``kill'' its porosity points and ensure sufficiently many line segments in the final set, we add tubes $w/N_k$-densely to $J_{k+1}$,
where $N_k \to \infty$. The problem that
immediately arises in the infinite dimensional case is that there is
no ``minimal'' width among all tubes from $J_k$: since the Banach spaces we are working over are not
locally compact, each collection of tubes $J_k$ will have to be infinite, so that the infimum of the widths in $J_k$ may be zero.
Therefore we must add such approximating tubes only locally, in a small neighbourhood of each tube from $J_k$. This forces the length of tubes
close to any fixed point $x\in\T_\lambda$ to shrink rapidly, and therefore
the point $x$
will not have a ``safe'' neighbourhood $\ball{x}{r}$ in which the set hits
every ball $\ball{y}{c\|x-y\|}$,
i.e.\ we again get porosity at $x$.

To overcome this, a new approach is required; see Definition~\ref{defMinf} and the proof of Lemma~\ref{thehardwork}. In brief, on constructing level $k+1$ approximation of tube $T$ from $J_k$,
we re-visit tubes constructed on each previous
levels, $1\le l\le k$, that form a sequence of ancestors of $T$. We approximate
each tube thus re-visited to level $k+1$ and include all new tubes in
$J_{k+1}$ --- see Figure~\ref{fig}. Approximations of lower level tubes to level $k+1$
allows us to include longer tubes in $J_{k+1}$.
This makes it possible to find, for each $x$ and $\e>0$, the critical value
$\delta_1>0$ such that $\e\delta$-close to $x$ there are  line
segments of length $\delta$, for every $\delta\in (0,\delta_1)$.
As explained in the beginning of this section,
this property turns out to be sufficient for the set to have the universal
differentiability property.

Theorem~\ref{th.passtoclosed} is stated using more general terms than line segments and tubes; we prove the statement for a general class $(K_r)_{r \in R}$ of compact subsets of an arbitrary metric space $(Y,d)$.

Finally, to get a totally disconnected universal differentiability set, we
need to get rid of all these straight line segments that we have included
in order to be able to prove the differentiability property
inside the set $T_\lambda$ defined above.
For this, we intersect $T_\lambda$ with a union of parallel hyperplanes obtained as a preimage of a totally disconnected subset of $\real$ under a continuous linear functional.
To have this intersection totally disconnected it is enough to ensure that the containing $G_\delta$ set has this intersection totally disconnected.
To show that the intersection of $T_\lambda$ with the union of hyperplanes has the universal differentiability property we prove that for every Lipschitz function, its differentiability points inside $T_\lambda$ form a very dense subset, and then choose the hyperplanes densely enough.
See Theorems~\ref{th.main-part1} and~\ref{th.UDtot} for details.

%%%%%%%%%%%%%%%%%%%%%%%%%%%%%%%%%%%%%%%%%%%%%%%%
\section{Definitions and notations}
\label{sec.def}
In this paper we shall be working with real valued functions
defined on a real Banach space $Y$ with separable dual. If a function
$f:E\to\real$ is defined on a subset $E$ of a Banach space $Y$ we say
$f$ is locally Lipschitz on its domain $E$ if
for every $x\in E$ there exist $r>0$ and $L\geq0$ such that
$|f(y')-f(y)|\le L\|y-y'\|$ for all $y,y'\in E\cap\ball{x}{r}$; the smallest
such constant $L$ is called the Lipschitz constant of $f$ in $\ball{x}{r}$ and is
denoted $\text{Lip}(f|_{\ball{x}{r}})$.
A function $f:Y\to\real$ is simply called Lipschitz if there
is a common Lipschitz constant $L<\infty$ for which the Lipschitz condition
is satisfied for any pair of points $y,y'\in Y$.
The smallest
such constant $L \geq 0$ is then called the Lipschitz constant of $f$  and is
denoted by $\text{Lip}(f)$.

For any $f:Y\to\real$ and $y,e \in Y$,
we define the directional derivative of $f$ in the direction $e$ as
\begin{equation}\label{eq.dirder}
f'(y,e)=
\lim_{t\to0}\frac{f(y+te)-f(y)}{t}
\end{equation}
if the limit exists.
If, for a fixed $y\in Y$, the formula \eqref{eq.dirder}
 defines an element of $Y^*$, we
say $f$ is \gateaux{} differentiable at $y$.
Finally, if $f$ is \gateaux{} differentiable at $y$ and the convergence in
\eqref{eq.dirder} is uniform for $e$ in the unit sphere $S(Y)$ of $Y$,
we say that
$f$ is \frechet{} differentiable at $y$ and call
$f'(y)$ the \frechet{} derivative of $f$, where $f'(y)e = f'(y,e)$ for all $e \in Y$.

The main focus of the present paper is on
universal differentiability sets (UDS), those
subsets of a Banach space $Y$ that contain points of
\frechet{} differentiability of every Lipschitz function $f:Y\to\real$.

Recall a subset $P$ of $Y$ is called porous if there is a $c>0$ such that
for every $y\in P$ and every $r>0$ there exists $\rho<r$ and
$y'\in \ball{y}{\rho}$ such that $\ball{y'}{c\rho}\cap P=\emptyset$.
It is easy to see that any porous set is not a UDS since the distance function
$f(x)=\inf_{y\in P}{\|x-y\|}$ is $1$-Lipschitz and is not \frechet{}
differentiable at any point of $P$, provided $P$ is porous; \cite{Zdist}.
It turns out that the same is true for any $\sigma$-porous set $P$, that is any set that is a
countable union of porous sets; see \cite{BL}.

The existence of a non $\sigma$-porous set in Euclidean spaces without porosity points and with a null closure was first shown in \cite{Z76}; see also \cite{Z98,ZP}.  The set we are constructing will, in the finite dimensional  case, automatically be an example of such a set.

We shall be interested in the Hausdorff dimension of the universal differentiability sets we shall construct.
Recall, for $s\geq 0$ and $A\subseteq Y$
$$
\mathcal H^s (A)=
\lim_{\delta\downarrow0}\inf\Bigl\{\sum_i \textrm{diam}(E_i)^s \text{ where }
A\subseteq \bigcup_i E_i \text{, } \textrm{diam}(E_i)\le\delta\Bigr\},
$$
defines the $s$-dimensional Hausdorff measure of $A$, and
$$
\textrm{dim}_{\mathcal H}(A)=
\inf\{s \geq 0 \text{ such that } \mathcal H^s(A)=0\}
$$
the Hausdorff dimension of $A$.

Let $(M,\|\cdot\|)$ be a normed space and $t \in M^3$.
We call the union of segments
$$W(t) = [t_1,t_2]\cup[t_2,t_3]$$
a \textit{wedge} and define the standard \textit{wedge distance} by
$$d(W(t'),W(t))=\max_{1\le i\le 3}\|t_i'-t_i\|.$$
We call the space of all wedges equipped with the standard wedge distance
the \textit{wedge space} and denote it $(\mathcal W_M,d)$. Note
that the distance $d$ depends on the norm chosen on $M$.
For $\alpha>0$ and  subsets
$S_1,S_2\subseteq M$ we say $S_1$ is an
\textit{$\alpha$-wedge approximation} for
$S_2$ in norm $\|\cdot\|$
if for any
$W\in\mathcal W_M$ with $W\subseteq S_2$, there exists $W'\in\mathcal W_M$ with
$W'\subseteq S_1$ and $d(W',W)\le\alpha$.
When it is clear which norm on $M$ is considered we
shall just say that $S_1$ is an
\textit{$\alpha$-wedge approximation} for
$S_2$.
We shall also consider a more general construction when the
wedge space
is replaced by a general family of compact subsets of $M$, which may now be considered a general metric space. We shall at times make use of the Hausdorff distance between two such compact sets:
$$
\mathcal H(K_1,K_2)=\inf\{r>0:K_1\subseteq\ballcl{K_2}{r}
\quad\text{and}\quad
K_2\subseteq\ballcl{K_1}{r}\}.
$$
Here we use $\ballcl{A}{r}$ to denote the closed $r$-neighbourhood of $A \subseteq M$; we shall also use $\ball{A}{r}$ to denote an open $r$-neighbourhood of $A \subseteq M$.

In order to  construct a UDS
we first define a $G_\delta$ set $\G$ containing a dense
set of arbitrarily small wedges and then define a subset $S$ of $\G$
as described in Section~\ref{intro}.
For an arbitrary Lipschitz function we then apply our optimisation method
to $S$; see Section~\ref{Optimisation}. We remark that any $G_\delta$ set is a
complete topological space; this lets us conclude that the
differentiability point, which we find as a
limit point of the iterative construction, belongs to the set $S$.

As we have already mentioned in Section~\ref{intro}, any UDS has Hausdorff dimension
at least $1$. We prove this result in the next lemma.
\begin{lemma}\label{lem.hausd}
Let $Y$ be a non-zero Banach space and $S\subseteq Y$ a universal differentiability set.
Then the Hausdorff dimension of $S$ is at least $1$.
\end{lemma}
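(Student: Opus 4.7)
The plan is to reduce to the one-dimensional case via a linear functional, then use the classical one-dimensional result that Lebesgue null sets in $\real$ are not universal differentiability sets.

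First, assume for a contradiction that $\dim_{\mathcal H}(S) < 1$. Since $Y \neq \{0\}$, pick any $e \in Y$ with $\|e\| = 1$, and apply the Hahn--Banach theorem to obtain $\phi \in Y^*$ with $\|\phi\| = 1$ and $\phi(e) = 1$. The functional $\phi$ is $1$-Lipschitz, so $\phi(S) \subseteq \real$ satisfies $\dim_{\mathcal H}(\phi(S)) \leq \dim_{\mathcal H}(S) < 1$. In particular $\phi(S)$ has Lebesgue measure zero in $\real$.

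Next, invoke the classical result (as cited in the introduction via \cite{Z} and \cite{FT}) that for any Lebesgue null subset $N \subseteq \real$ there exists a Lipschitz function $g \colon \real \to \real$ that fails to be differentiable at every point of $N$; apply this with $N = \phi(S)$ to obtain such a $g$. Define $f \colon Y \to \real$ by $f = g \circ \phi$. Since $\phi$ and $g$ are Lipschitz, so is $f$.

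Finally, suppose $f$ were Fr\'echet differentiable at some $y \in S$. Then in particular the directional derivative $f'(y,e)$ exists, and
\[
f'(y,e) = \lim_{t \to 0} \frac{g(\phi(y) + t\phi(e)) - g(\phi(y))}{t} = \lim_{t \to 0} \frac{g(\phi(y) + t) - g(\phi(y))}{t},
\]
so $g$ is differentiable at $\phi(y) \in \phi(S)$, contradicting the choice of $g$. Hence $S$ is not a universal differentiability set, completing the contradiction. The only nontrivial input is the one-dimensional fact about null sets, which is already referenced in the paper; the rest is essentially a Hahn--Banach plus pushforward argument, so I do not anticipate any real obstacle.
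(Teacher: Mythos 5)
Your proposal is correct and is essentially identical to the paper's own proof: both project $S$ by a norm-one functional paired with a unit vector $e$, observe that the image is Lebesgue null since Lipschitz maps do not increase Hausdorff dimension, invoke the Zahorski/Fowler--Preiss one-dimensional result to get a Lipschitz $g$ nowhere differentiable on the image, and pull back by composition, noting that the directional derivative in the direction $e$ would force differentiability of $g$. The only cosmetic difference is your explicit appeal to Hahn--Banach to normalise the functional, which the paper leaves implicit.
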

\begin{proof}
Assume $\dim_{\mathcal{H}}(S)<1$. Fix any nonzero $P\in Y^*$ and $e\in Y$ with
$P(e)=1$. The Hausdorff
dimension of $P(S)$ is strictly less than $1$, and therefore $P(S)$
has Lebesgue measure $0$.
Let  $g:\R\to\R$ be a Lipschitz function that is
not differentiable at any $y\in P(S)$.
Then
$f := g \circ P:Y\to\real$ is a Lipschitz function
that is not differentiable at any $x\in S$, as the directional derivative $f'(y,e)$ does not exist for $y \in S$.
\end{proof}
%%%%%%%%%%%%%%%%%%%%%%%%%%%%%%%%%%%%%%%%%%%%%%%%

\section{Main results}
We begin this section with the statement of a
criterion for universal differentiability.

\begin{theorem}\label{th.main-part1}
Let
$(M,d)$ be a non-empty complete metric space,
$(Y,\|\cdot\|)$ be a Banach space with separable dual
and $\pi:M\to Y$ a continuous mapping.

Suppose that for every $\eta> 0$ and $x \in M$ and every open neighbourhood $N(x)$ of $x$ in $M$ there exists $\delta_0=\delta_0(x,N(x),\eta) > 0$ such that, for any $\delta \in(0, \delta_0)$ the set $\pi(N(x))$ is a $\delta\eta$-wedge approximation
for $\ball{\pi(x)}{\delta}$.

Then $\pi(M)$ is a universal differentiability set and, moreover,
for every Lipschitz function $g:Y\to\R$
the set
$D_g = \{y\in\pi(M):g\text{ is \frechet{} differentiable at }y\}$
is dense in $\pi(M)$. Furthermore, if $y \in \pi(M)$, $r>0$
and $P \colon Y \to \R$ is a non-zero continuous linear map then there exists a finite open interval $I = I_g(y)$ with $Py \in I$ and
$$
\mu(I \setminus P(D_g\cap\ball{y}{r})) = 0,
$$
where $\mu$ denotes the Lebesgue measure.
\end{theorem}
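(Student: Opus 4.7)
The plan is to combine the optimisation procedure of Section~\ref{Optimisation} with the Differentiability Lemma~\ref{lem.diff} of Section~\ref{Differentiability}. Given a Lipschitz function $g:Y\to\R$, I start from a pair $(x_0,e_0)\in M\times S(Y)$ for which $g'(\pi(x_0),e_0)$ exists, and iteratively choose pairs $(x_n,e_n)$ almost maximising $g'(\pi(x),e)$ over small neighbourhoods in $M\times S(Y)$ of $(x_{n-1},e_{n-1})$ (in the sense of Definition~\ref{defnweight} and inequality~\eqref{eq.defxnen}). Completeness of $M$ together with continuity of $\pi$ will guarantee that the sequence converges to a limit $(\limx,\lime)$ with $\limx\in M$; the whole point of optimising on the bundle $M$ over $Y$ rather than directly on $Y$ is precisely that $\pi(\limx)$ then automatically lies in $\pi(M)$.

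At $(\limx,\lime)$, Theorem~\ref{th.incr} will give that $g'(\pi(\limx),\lime)$ exists and is almost locally maximal in the sense of condition~\eqref{eq.condi}. The Differentiability Lemma~\ref{lem.diff} promotes such almost local maximality to Fr\'echet differentiability, under the hypothesis that for every $\eta>0$ and all sufficiently small $\delta$, every wedge contained in $\ball{\pi(\limx)}{\delta}$ is $\eta\delta$-approximable by a wedge lying in a prescribed controlling set on which the directional derivatives of $g$ are dominated by the almost-maximal value. This is exactly what the wedge approximation hypothesis of the present theorem supplies, applied with $x=\limx$ and $N(x)$ the open neighbourhood of $\limx$ on which almost local maximality in the sense of Lemma~\ref{lem.max} holds. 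Invoking Lemma~\ref{lem.diff} then yields that $g$ is Fr\'echet differentiable at $\pi(\limx)$, so $\pi(\limx)\in D_g\cap\pi(M)$.

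Density of $D_g$ in $\pi(M)$ will follow from the fact that the initial pair $(x_0,e_0)$ is arbitrary and the weights in the optimisation may be chosen small enough that every iterate $x_n$ stays within any prescribed open neighbourhood of $x_0$ in $M$; the limit $\pi(\limx)$ can therefore be made to lie arbitrarily close to $\pi(x_0)$, and in particular inside any given open subset of $\pi(M)$ chosen in advance.

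The final measure assertion is the main obstacle. The natural approach is to apply the preceding machinery to the family of perturbed Lipschitz functions $g_c(z)=g(z)-cP(z)$ for $c$ in a small interval around $0$. Since $cP$ is linear, $D_{g_c}=D_g$, so each parameter $c$ produces some point $z_c\in D_g\cap\ball{y}{r}$. Varying $c$ shifts the quantity being maximised in the optimisation by $cP(e)$, biasing the selected direction $e_c$ towards directions on which $P$ is large, and hence displacing the location $z_c$ in the $P$-direction. The hard step is to show that $c\mapsto P(z_c)$ has image of full Lebesgue measure in a small open interval $I$ containing $Py$; this will demand a careful quantitative analysis of how the optimisation output depends on the perturbation parameter $c$, combined with a covering or change-of-variables type argument to pass from pointwise control of $P(z_c)$ to the null complement in the image.
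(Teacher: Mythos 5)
Your treatment of the first two assertions (that $\pi(M)$ is a universal differentiability set and that $D_g$ is dense in $\pi(M)$) follows the paper's own route: Theorem~\ref{th.incr} produces a pair $(\tilde x,\tilde e)$ at which the auxiliary function $f=g+2\mathrm{Lip}(g)e_0^*$ has an almost locally maximal directional derivative, the wedge approximation hypothesis applied to the neighbourhood $N_\e$ supplies exactly conditions (1) and (2) of Lemma~\ref{lem.diff}, and density is obtained by restricting the bundle to an open $N\subseteq M$ with $\pi(N)\subseteq \ball{y}{\e}$ (your variant, shrinking the steps of the iteration so the iterates stay near $x_0$, amounts to the same thing). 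As a sketch this part is sound, modulo the standing reduction to a norm that is \frechet{} differentiable off the origin.

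The measure assertion is where your proposal fails, and the failure is structural rather than a missing estimate. Perturbing $g$ by the linear functional $-cP$ cannot work: since $(g-cP)'(z,e)=g'(z,e)-cP(e)$, the perturbation changes every directional derivative by a quantity depending only on the direction $e$ and not on the location $z$. The optimisation therefore receives no information about $P(z)$, so there is no mechanism by which varying $c$ could force the output point $z_c$ to have $P(z_c)$ sweep out a full-measure subset of an interval; your claim that biasing the selected \emph{direction} towards large $P(e)$ ``displaces the location $z_c$ in the $P$-direction'' has no support. Worse, the output of the optimisation is highly non-canonical (each $(x_n,e_n)$ is only chosen within $\e_n$ of a supremum), so $c\mapsto P(z_c)$ need not even be measurable. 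The paper's device is precisely to make the perturbation \emph{location}-sensitive: assuming for contradiction that $I\setminus P(D_g\cap\ball{y}{r})$ has positive measure, it takes a non-constant everywhere differentiable Lipschitz $h$ on $I$ with $h'=0$ on $P(D_g\cap\ball{y}{r})\cap I$ and runs the first part of the theorem on $G=g+3h\circ P$, restricted to the sub-bundle over a line segment $L_0\subseteq\pi(M)$ that wedge-approximates $[y-\delta e_1,y+\delta e_1]$, starting from a point $y_0\in L_0$ where $h'(Py_0)\ne0$, so that $G'(y_0,e_0)\ge2$. The resulting \frechet{} differentiability point $\tilde y$ of $G$ satisfies $G'(\tilde y,\tilde e)\ge2$; but $\tilde y\in D_g\cap\ball{y}{r}$ forces $P\tilde y$ into the zero set of $h'$, whence $G'(\tilde y,\tilde e)=g'(\tilde y,\tilde e)\le\mathrm{Lip}(g)=1$, a contradiction. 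Note also that your sketch never addresses the measurability of $P(D_g\cap\ball{y}{r})$, which must be established (in the paper: $D_g$ is an $F_{\sigma\delta}$ set, so its image under $P$ is analytic, hence Lebesgue measurable) before the displayed measure identity even makes sense.
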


To prove Theorem~\ref{th.main-part1}, we need to find points
of \frechet{} differentiability in $\pi(M)$ for every Lipschitz function
defined on $Y$. To accomplish this, we first apply the
next theorem, Theorem~\ref{th.incr}, to obtain a point with
almost locally maximal directional derivative,
and then use
Differentiability Lemma~\ref{lem.diff} to  show that the function
is in fact \frechet{} differentiable at this point.

\begin{theorem}
\label{th.incr}
Let $(M,d)$ be a non-empty complete metric space, $(Y,\|\cdot\|)$ a Banach space,
$\pi\colon M \to Y$ a continuous map and
$\Theta\colon (0,\infty) \rightarrow (0,\infty)$ a real-valued function with
 $\Theta(t) \rightarrow 0$ as $t \rightarrow 0^+$.
 Assume $g\colon Y \to \R$ is a Lipschitz function and
$$
(x_0,e_0) \in
D = \{(x,e)\in M\times (Y\setminus \{0\}) \text{ such that  }g'(\pi x,e) \text{ exists}\}
$$
is such that $\|e_0\|=1$ and $g'(x_0,e_0)\ge0$.

Then one can define
\begin{enumerate}
\item[(1)]
a Lipschitz function $f \colon Y \to \R$ by
\begin{equation}\label{eq.deffg}
f=g+2\mathrm{Lip}(g)e_0^*,
\end{equation}
where $e_0^*\in Y^*$ is a linear functional such that
$\|e_0^*\|_{(Y,\|\cdot\|)^*}=e_0^*(e_0)=1$,
\item[(2)]
a norm $\| \cdot \|'$ on $Y$, with
$\|y\| \leq \| y \|' \leq 2 \|y\|$
for all $y \in Y$, and
\item[(3)]
a pair
$(\tilde x,\tilde e) \in D$ with $\|\tilde e \|' = 1$
\end{enumerate}
such that $f'(\pi\tilde x,\tilde e) \geq f'(\pi x_0,e_0)$ and the directional derivative $f'(\pi\tilde x,\tilde e)$ is almost locally maximal in the following sense.
For any
$\e > 0$ there exists an open neighbourhood $N_{\e}$ of $\tilde x$ in $M$ such that
whenever $(x',e') \in D$ with

\begin{enumerate}
\item[(i)]
$x' \in N_{\e}$, $\| e' \|' = 1$ and
\item[(ii)]
for any $t\in\R$
\begin{align}
\label{eqincr}
\notag |(f(\pi x'&+t\tilde e)-f(\pi x'))-(f(\pi\tilde x+t\tilde e)-
f(\pi\tilde x))|\\
&\leq \Theta(f'(\pi x',e')-f'(\pi\tilde x,\tilde e))|t|,
\end{align}
\end{enumerate}
then we have $f'(\pi x',e') < f'(\pi\tilde x,\tilde e) + \e$.

Moreover, if the original norm $\| \cdot \|$ is \frechet{} differentiable on $Y\setminus \{0\}$ then the norm $\| \cdot \|'$ can be chosen with this property too.
\end{theorem}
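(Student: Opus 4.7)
The plan is to produce $(\tilde x,\tilde e)$ as the limit of a bunched sequence $(x_n,e_n)\in D$ obtained by iteratively almost maximising $f'$ under slowly tightening constraints, and to take $\|\cdot\|'$ as the limit of a sequence of norms $\|\cdot\|_n$ refined at each step. The passage from $g$ to $f=g+2\mathrm{Lip}(g)e_0^*$ serves two purposes: it ensures $\mathrm{Lip}(f)\le 3\mathrm{Lip}(g)$, so that $|f'(\pi x,e)|\le 3\mathrm{Lip}(g)$ whenever $\|e\|\le 1$, and it guarantees the seed value $f'(\pi x_0,e_0)=g'(\pi x_0,e_0)+2\mathrm{Lip}(g)\ge 2\mathrm{Lip}(g)>0$. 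Any monotone almost-maximising sequence for $f'$ along unit vectors in a norm between $\|\cdot\|$ and $2\|\cdot\|$ therefore lives in a bounded interval and must converge.

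Inductively I choose $\e_n\searrow 0$, pairs $(x_n,e_n)\in D$, equivalent norms $\|\cdot\|_n$ on $Y$ satisfying $\|y\|\le\|y\|_n\le(2-2^{-n})\|y\|$ and $\|e_n\|_n=1$, and nested open neighbourhoods $N_n$ of $x_n$ in $M$ with $\overline{N_{n+1}}\subseteq N_n$ and $\mathrm{diam}_M(N_{n+1})\le 2^{-n-1}$. At stage $n+1$ a pair $(x',e')\in D$ is called admissible if $x'\in N_n$, $\|e'\|_n=1$ and
\begin{equation*}
|(f(\pi x'+te_n)-f(\pi x'))-(f(\pi x_n+te_n)-f(\pi x_n))|\le\Theta(f'(\pi x',e')-f'(\pi x_n,e_n))|t|
\end{equation*}
for every $t\in\R$, with the convention that this inequality is vacuous when the derivative gap is non-positive, so that $(x_n,e_n)$ is itself admissible. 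I then pick $(x_{n+1},e_{n+1})$ admissible with $f'(\pi x_{n+1},e_{n+1})\ge f'(\pi x_n,e_n)$ and within $\e_n$ of the supremum of $f'$ over admissible pairs, and I define $\|\cdot\|_{n+1}$ by adding to $\|\cdot\|_n$ a non-negative convex penalty bounded above by $2^{-n-2}\|y\|$ and vanishing on $\R e_{n+1}$, so that $\|e_{n+1}\|_{n+1}=1$; if $\|\cdot\|$ is \frechet{} differentiable on $Y\setminus\{0\}$, the penalty is taken $C^1$ (built from a smooth bump and a suitable functional arising from a dense sequence in $Y^*$), so that \frechet{} differentiability is preserved at every stage and passes to the limit.

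Completeness of $M$ and $Y$ then forces $x_n\to\tilde x\in\bigcap_n N_n$ and $e_n\to\tilde e$; the norms $\|\cdot\|_n$ increase to a limit norm $\|\cdot\|'$ with $\|y\|\le\|y\|'\le 2\|y\|$ and $\|\tilde e\|'=1$, while the uniform Lipschitz bound on $f$ combined with equicontinuity of the difference quotients $t^{-1}(f(\pi x_n+te_n)-f(\pi x_n))$ yields $(\tilde x,\tilde e)\in D$ and $f'(\pi\tilde x,\tilde e)=\lim_n f'(\pi x_n,e_n)=:L^*\ge f'(\pi x_0,e_0)$. For the maximality clause, given $\e>0$ I choose $n$ so large that $\e_n<\e/2$, $L^*-f'(\pi x_n,e_n)<\e/2$, and such that the triangle-inequality discrepancy $|(f(\pi\tilde x+t\tilde e)-f(\pi\tilde x))-(f(\pi x_n+te_n)-f(\pi x_n))|\le(\e/4)|t|$ holds for all $t\in\R$, which follows from $\mathrm{Lip}(f)<\infty$ together with $\pi x_n\to\pi\tilde x$ and $e_n\to\tilde e$; then I set $N_\e:=N_n$. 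Any $(x',e')$ satisfying (i)--(ii) with respect to $(\tilde x,\tilde e)$ --- after renormalising $e'$ from $\|\cdot\|'$-unit to $\|\cdot\|_n$-unit length, which introduces only an arbitrarily small error --- also satisfies the admissibility inequality at stage $n+1$, whence $f'(\pi x',e')\le f'(\pi x_{n+1},e_{n+1})+\e_n<L^*+\e$.

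The delicate part will be the quantitative transfer of condition (ii) from $(\tilde x,\tilde e)$ back to $(x_n,e_n)$: the slack on the right of (ii) is $\Theta(f'(\pi x',e')-f'(\pi\tilde x,\tilde e))$, which can shrink much faster than the visible gap $L^*-f'(\pi x_n,e_n)$, so the rates $\e_n$, the sizes of the norm penalties, the shrinkage of the neighbourhoods $N_n$, and the stopping index $n$ must all be calibrated against the modulus $\Theta$ in order for the triangle inequality between the two increment comparisons to have enough room to absorb the discrepancy. Once this bookkeeping is done, $(\tilde x,\tilde e)$ is the desired almost locally maximal pair, and the $C^1$ nature of the incremental penalties transfers \frechet{} differentiability of $\|\cdot\|$ to $\|\cdot\|'$.
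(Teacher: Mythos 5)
Your overall architecture (perturb $g$ to $f=g+2\mathrm{Lip}(g)e_0^*$, iterate a near-maximisation over an admissible class, refine the norm at each step to force convergence of the directions, pass to the limit) is the same as the paper's, but the proof has a genuine gap exactly where you defer to ``bookkeeping'': the transfer of condition (ii) between stages is not a calibration problem but fails outright for the admissible classes as you define them. Your stage-$(n+1)$ admissibility allows slack exactly $\Theta(f'(\pi x',e')-f'(\pi x_n,e_n))|t|$ and nothing else. To show that a pair $(x',e')$ satisfying (ii) relative to $(\tilde x,\tilde e)$ is admissible at stage $n+1$ (so that the near-supremum property of $(x_{n+1},e_{n+1})$ applies to it), you must change the base point from $\pi\tilde x$ to $\pi x_n$ and the direction from $\tilde e$ to $e_n$; this costs additive errors of order $\bigl(\mathrm{Lip}(f)\|\tilde e-e_n\|+\dots\bigr)|t|$, plus for small $|t|$ an error controlled only through the directional derivatives at $\pi\tilde x$ and $\pi x_n$. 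These errors are fixed positive multiples of $|t|$ once $n$ is fixed, whereas the available room $\Theta(f'(\pi x',e')-f'(\pi x_n,e_n))$ tends to $0$ as the derivative gap does; for pairs $(x',e')$ with small gap the required inequality is simply false. The paper resolves this with two devices you do not have: it replaces $\Theta$ by a function $\Omega\ge 2\Theta$ satisfying $\Omega(A)+2B\le\Omega(A+B)$ (Lemma~\ref{omegalemma}(3)), which absorbs the errors that are controlled by differences of weights, and it builds a strictly positive additive slack $\sigma_{n-1}-\nu$ into the definition of the class $D_n$ itself, which absorbs the errors controlled by $\|\tilde e-e_{n-1}\|$ and $\|\pi x-\pi x_{n-1}\|$; the slack only disappears in the limit, which is why the final statement can be phrased with $\Theta$ alone. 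Without an additive slack in the intermediate classes, the class over which you take the supremum at stage $n+1$ does not contain the pairs you later need to dominate, and the maximality conclusion does not follow.

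Two further steps are asserted rather than proved and are not routine. First, $(\tilde x,\tilde e)\in D$ does not follow from ``equicontinuity of the difference quotients'': a Lipschitz function need not have a directional derivative at a limit of points where it does. The paper derives existence of $f'(\pi\tilde x,\tilde e)$ from the accumulated increment comparisons, and for that it needs every $(x_k,e_k)$ with $k\ge n$ to satisfy the comparison directly against $(x_{n-1},e_{n-1})$ --- the nesting $D_{n+1}\subseteq D_n$ of Lemma~\ref{lemleq}, which again rests on the $\Omega$ property and the $\sigma$-slacks. Your comparisons link only consecutive pairs, and telescoping them gives an error $\sum_k\Theta(\mathrm{gap}_k)$ that need not be small even though each term is; moreover your convention that the comparison is vacuous when the gap is non-positive can break the chain entirely. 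Second, the convergence of $(e_n)$ needs the quantitative argument that near-maximality of the weight together with the norm penalty forces $\|e_{n+1}-\R e_n\|$ to be small (the ``moreover'' part of Lemma~\ref{lemleq}); completeness of $Y$ alone gives nothing. These are the core ideas of the paper's Section~\ref{Optimisation}, and the proposal as written does not contain them.
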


We prove Theorem~\ref{th.incr} at the end of Section~\ref{Optimisation}.
We will now use its conclusion to prove Theorem~\ref{th.main-part1}.

\begin{proof}[Proof of Theorem~\ref{th.main-part1}]
Without loss of generality we may assume that the norm $\|\cdot\|$
is \frechet{} differentiable on $Y\setminus\{0\}$, by~\cite{Asp, BL}, since passing to an equivalent norm keeps the
$\delta\eta$-wedge approximation condition and does not change the
differentiability property.

Taking arbitrary $x\in M$ and $N_0(x)=M$ we get that
the wedge approximation property of $\pi(N_0(x))$ implies that $\pi(M)$
contains a non-degenerate straight line segment $L\subseteq Y$. As
any Lipschitz function $g \colon Y \to \R$ is differentiable
at some point $p\in L$ in the direction of $L$, the set
$$
D:= \{(x,e) \in M \times (Y \setminus \{0\}) \text{ such that } g'(\pi x,e) \text{ exists}\}$$
is non-empty.

Without loss of generality we may assume that the Lipschitz constant of $g$
is equal to $1$. Picking an arbitrary $(x_0,e_0) \in D$ and
$\Theta(s) = 25 \sqrt{3s}$, we see that all the conditions
of  Theorem~\ref{th.incr} are satisfied if we rescale $e_0$ in order to
have $\|e_0\|=1$ and replace $e_0$ with $-e_0$ if necessary so as to have
$g'(x_0,e_0)\ge0$.
Let the Lipschitz function $f \colon Y \to \R$,
the norm $\|\cdot \|'$ on $Y$,
the pair $(\tilde x,\tilde e) \in D$
and, for each $\e > 0$, the open neighbourhood $N_{\e}$ of $\tilde x \in M$ be given by the conclusion of Theorem~\ref{th.incr}. Note that
$f'(\pi\tilde x,\tilde e)\ge f'(\pi x_0,e_0)$,
$\text{Lip}(f) \leq 3$, we may take $\|\cdot \|'$ to be \frechet{} differentiable on $Y\setminus \{0\}$ and that
 \begin{equation}
 \label{normbound}
 \|z\| \leq \|z\|' \leq 2\|z\|
 \end{equation}
 for all $z \in Y$, so that $\|\tilde e\| \leq \|\tilde e\|' = 1$.

We claim that $\tilde y = \pi\tilde x$ is a point of \frechet{}
differentiability of $f$.

Since the two norms $\|\cdot\|$, $\|\cdot\|'$ are equivalent, it suffices
to verify the conditions of
Lemma~\ref{lem.diff} for $(Y,\|\cdot\|')$, applied to the
Lipschitz function $f$, $L = 3$ and the pair
$$(\tilde y,\tilde e) = (\pi \tilde x,\tilde e).$$
To accomplish this, we let $\e,\theta>0$ and show that
$$F_\e=\pi(N_\e) \text{ and } \delta_*=\delta_0(\tilde x,N_\e,\theta/2)$$
are such that (1) and (2) of Lemma~\ref{lem.diff} hold, with the norm $\|\cdot\|$ replaced by $\|\cdot\|'$.

Suppose $\delta \in (0,\delta_*)$, $\|y_i-\tilde y\|'<\delta$ for $i=1,2,3$.
Then
from~\eqref{normbound} we have $\|y_i-\tilde y\| < \delta$
for $i=1,2,3$ as well. Now using that $F_\e=\pi(N_\e)$ is
a $\delta\theta/2$-wedge approximation for $\ball{\tilde y}{\delta}$ in $\|\cdot\|$
and the inequality \eqref{normbound}, we get that $F_\e$
is a $\delta\theta$-wedge approximation for $\ball{\tilde y}{\delta}$ in
norm $\|\cdot\|'$.
This verifies condition (1) of Lemma~\ref{lem.diff}.

For condition (2) we note that if $y' \in F_\e$, $\|e'\|' = 1$ and
\begin{align*}
&|(f(y'+t\tilde e)-f(y'))-(f(\tilde y+t\tilde e)-f(\tilde y))|\\
&\leq 25\sqrt{(f'(y',e')-f'(\tilde y,\tilde e))L}\cdot |t|
\end{align*}
for all $t \in \R$, then as $F_\e = \pi(N_\e)$ we may write
$y' = \pi x'$ where $x' \in N_\e$. As $L = 3$ and $\Omega(s) = 25 \sqrt{3s}$, the conditions (i) and (ii) of Theorem~\ref{th.incr} are satisfied, so we deduce that
$f'(\pi x',e') < f'(\pi\tilde x,\tilde e)+\e$.

As all the conditions of Lemma~\ref{lem.diff} are satisfied we deduce that $f$ is \frechet{} differentiable at $\tilde y = \pi\tilde x \in \pi(M)$. As $f- g$ is
linear, we conclude
$g$ is also \frechet{} differentiable at $\tilde y \in \pi(M)$. Hence $\pi(M)$ is indeed a universal differentiability set in $Y$.

Note moreover we have proved slightly more: namely, if $M$ is any non-empty complete metric space
satisfying the wedge approximation property as in the conditions of present
theorem, then for any Lipschitz $g:Y\to\real$ and
an arbitrary pair $(x_0,e_0)\in M\times (Y\setminus\{0\})$
such that $\|e_0\|=1$ and $g'(\pi x_0,e_0)\ge0$, there is a Lipschitz function
$f:Y\to\real$ defined according to \eqref{eq.deffg}
and a pair $(\tilde x,\tilde e)\in M\times (Y\setminus\{0\})$ such that
$\|\tilde e\|\le1$,
$g$ is \frechet{}
differentiable at $\pi\tilde x$ and
$f'(\pi\tilde x,\tilde e)\ge f'(\pi x_0,e_0)$.

To verify the
density of the set
$D_g = \{y\in\pi(M):g\text{ is \frechet{} differentiable at }y\}$
in $\pi(M)$,
it suffices to note that if $y=\pi x \in \pi(M)$ and $\e > 0$, we may pick a non-empty open set $N \subseteq M$ such that
$\pi(N) \subseteq \ball{y}{\e}$.
Then as the restriction bundle $\pi|_{N} \colon N \to Y$ satisfies the conditions of the present theorem, any Lipschitz $g \colon Y \to \R$ contains a point of \frechet{} differentiability in
$\pi(N) \subseteq \pi(X) \cap \ball{y}{\e}$.

We now check the last observation of the theorem. We may assume $\|P\| = 1$.
Let
$$y =\pi(x)\in \pi(M)\text{ and }\delta_0=\delta_0(x,M,\eta),$$
where $\eta\in (0,1/2)$.
Choose also a
vector $e_1\in Y$ such that $Pe_1=1$.
Fix any
$$\delta\in(0,\min\{1/30,r/2,\delta_0\})$$
and find a line
segment $L_0\subseteq \pi(M)$ that is
an $\eta\delta$-wedge
approximation for $L=[y-\delta e_1,y+\delta e_1]$.
It is easy to see that
$L_0\subseteq\ball{y}{r}$ and
$$
P(L_0)\supseteq I=(Py-(1-\eta)\delta,Py+(1-\eta)\delta).
$$
Let $e_0$ be the unit vector in the direction of $L_0$.
As $g$ is Lipschitz, the directional derivative
$g'(z,e_0)$ exists for almost all points $z\in L_0$.
We note that the set $D_g$ is a $F_{\sigma\delta}$-set:
$$
D_g=
\bigcap_{n\ge1}
\bigcup_{\underset{\delta\in\mathbb Q}{y^*\in A}}
\bigcap_{\underset{|t|<\delta}{\|z\|\le1}}
\bigl\{y\in Y\colon
|g(y+tz)-g(y)-ty^*(z)|\le|t|/n
\bigr\},
$$
where $A$ is a countable dense subset of the unit ball of $Y^*$.
Therefore
the image
$$P(D_g \cap \ball{y}{r}),$$
being a projection of a Borel subset of
a Polish space,
is an analytic subset of $\R$ and therefore Lebesgue measurable.

Suppose then that the Lebesgue measure $\mu(I \setminus P(D_g\cap\ball{y}{r}))$
is strictly positive.
There exists a non-constant everywhere differentiable Lipschitz function
$h \colon I \to \R$ such that $h' = 0$ on $P(D_g\cap\ball{y}{r}) \cap I$.
This implies there exists $y_0\in L_0$ such that $s=Py_0\in I$,
the directional derivative $g'(y_0,e_0)$ exists and $h'(s)\ne0$.
By scaling $h$ if necessary we may assume $h'(s) = 1$.
Let $G = g + 3h\circ P$. This is a Lipschitz mapping defined on a
$G_\delta$-set $\widetilde M=\pi^{-1}(L_0\cap P^{-1}(I))\subseteq M$ and such that the directional
derivative
$G'(y_0,e_0)$ exists; moreover,
$G'(y_0,e_0)=g'(y_0,e_0)+3\ge2$ as $\text{Lip}(g) = 1$.
Since $L_0\subseteq \pi(\widetilde M)$ we can find $x_0\in\widetilde M$
such that $y_0=\pi x_0$.

Then, using the more general statement we have proved for the first part
of the theorem for $\widetilde M$ instead of $M$ and $G$ instead of $g$
we conclude that there is a
Lipschitz function
$F:Y\to\real$ defined according to \eqref{eq.deffg},
$F=G+2\mathrm{Lip}(G)e_0^*$, where $\|e_0^*\|=e_0^*(e_0)=1$,
and a pair
$(\tilde x,\tilde e)\in\widetilde M\times (Y\setminus\{0\})$ such that
$\|\tilde e\|\le1$,
$G$ is \frechet{}
differentiable at $\pi\tilde x$ and
$F'(\pi\tilde x,\tilde e)\ge F'(\pi x_0,e_0)$.
Then $\tilde y=\pi\tilde x\in L_0\subseteq\ball{y}{r}$
is a point of \frechet{} differentiability of $G$ and
\begin{align*}
G'(\tilde y,\tilde e)-G'(y_0, e_0)
&=
F'(\tilde y,\tilde e)-F'(y_0, e_0)
+2\mathrm{Lip}(G) e_0^*(e_0-\tilde e)\\
&\ge
F'(\tilde y,\tilde e)-F'(y_0, e_0)
\ge0
\end{align*}
as $e_0^*(e_0)=1$ and $e_0^*(\tilde e)\le\|\tilde e\|\le1$.
Together with $G'(y_0,e_0)\ge2$ we conclude
$G'(\tilde y,\tilde e)\ge2$.

However, since $G$ is \frechet{} differentiable at $\tilde y$, so is $g$, and
therefore $\tilde y\in D_g\cap\ball{y}{r}$.
As we also have $\tilde y=\pi\tilde x\in\pi(\widetilde M)$, we conclude
$P\tilde y\in P(D_g\cap\ball{y}{r})\cap I$; hence
$G'(\tilde y,\tilde e)=g'(\tilde y,\tilde e)$,
a contradiction to $G'(\tilde y,\tilde e)\ge2$
as a directional derivative of a $1$-Lipschitz function $g$
cannot exceed $1$.
\end{proof}

Together with the following statement, Theorem~\ref{th.main-part1}
implies the existence of a closed universal differentiability set; see
Lemma~\ref{lem.main-cl}.
\begin{theorem}
\label{th.passtoclosed}
Let $(Y,d)$ be a metric space and let $(K_r)_{r \in R}$ be a collection of non-empty compact subsets of $Y$ indexed by a non-empty metric space $(R,\gamma)$ such that the Hausdorff distance $\mathcal{H}(K_r,K_s)$
is bounded from above by
$\gamma(r,s)$ for every $r,s \in R$.
Assume $\G$ is a $G_{\delta}$ subset of $Y$ such that $\G$ contains a $\gamma$-dense subset of the family $(K_r)_{r \in R}$ and $K_{r_0}\subseteq \G$ is one
of these compacts.
Assume further that there exist $\rho>0$ and $\e_0 > 0$ such that for every $\e \in (0,\e_0)$ we can find a set of indices
$R(\e) \subseteq R$ such that
\begin{equation}\label{eq.repsilon}
\begin{tabular}{l}
\textbullet\quad for every $s \in R$ there exists $t \in R(\e)$ with $\gamma(t,s) < \e$,\\
\textbullet \quad
for every subset $S$ of $Y$ of diameter at most $\rho \e$
the set\\ \quad\ $\{r \in R(\e): S\cap K_r\ne\emptyset\}$ is finite.
\end{tabular}
\end{equation}
Then there exists a nested collection of closed non-empty subsets $(\T_{\lambda})_{0 \leq \lambda \leq 1}$ of $\G$,
$$\T_{\lambda'} \subseteq \T_{\lambda} \text{ whenever } 0\le \lambda'\leq \lambda\le1,$$
each containing $K_{r_0}$
that satisfies the following. For each
$$\eta > 0\text{, }\lambda\in (0,1]\text{ and }y \in \bigcup_{0\le\lambda'<\lambda}\T_{\lambda'}$$
there exists
$\delta_1= \delta_1(\eta,\lambda,y) > 0$ such that if
$\delta \in (0,\delta_1)$ and $s \in R$ with $K_s\subseteq\ballcl{y}{\delta}$ there exists $t \in R$ such that $K_t \subseteq \T_{\lambda}$ and
$\gamma(t,s) < \eta \delta$.
\end{theorem}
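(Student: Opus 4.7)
The plan is to realize $\T_\lambda$ as an intersection $\bigcap_k J_k(\lambda)$ of closed ``level sets'', each a locally finite union of (possibly slightly thickened) compacts $K_r$, by a tube-style inductive construction generalizing the one sketched in the introduction. First I would write $\G=\bigcap_k \G_k$ with decreasing open $\G_k$, fix a rapidly decreasing scale sequence $\e_k\downarrow 0$ with $\e_k<\e_0$, and use \eqref{eq.repsilon} to pick locally finite discretizations $R_k:=R(\e_k)$. Starting from $R_1(\lambda)\ni r_0$ with $K_{r_0}\subseteq\G_1$, I would construct subcollections $R_k(\lambda)\subseteq R_k$ inductively on $k$: at step $k+1$, for \emph{every} ancestor level $l\le k$ and every $r\in R_l(\lambda)$, add to $R_{k+1}(\lambda)$ all $t\in R_{k+1}$ satisfying $K_t\subseteq\G_{k+1}$ and $\gamma(t,r)<\alpha_{k,l}(\lambda)$, where $\alpha_{k,l}(\lambda)$ is a threshold strictly increasing in $\lambda$. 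The locally finite part of \eqref{eq.repsilon} ensures only finitely many $t$'s are added per ancestor $r$, keeping $J_{k+1}(\lambda):=\bigcup_{t\in R_{k+1}(\lambda)}K_t$ closed. Re-visiting \emph{all} earlier levels, not just the immediate predecessor, is the new idea from the introduction that compensates for the absence of a uniform minimum scale among the $K_r$.

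With this setup, $\T_{\lambda'}\subseteq\T_\lambda$ for $\lambda'\le\lambda$ follows from monotonicity of $\alpha_{k,l}$ in $\lambda$, $K_{r_0}\subseteq\T_\lambda$ is built in, and $\T_\lambda\subseteq\bigcap_k\G_k=\G$ from the level constraints $K_r\subseteq\G_k$. The heart of the proof is the density property. Fix $y\in\T_{\lambda'}$ with $\lambda'<\lambda$ and $\eta>0$; by construction $y$ lies in some $K_{r_k}$ with $r_k\in R_k(\lambda')$ for every $k$. For $\delta>0$ sufficiently small (how small depends on the decay of $\e_k$, $\alpha_{k,l}$, and on the gap $\lambda-\lambda'$) and any $s\in R$ with $K_s\subseteq\ballcl{y}{\delta}$, approximate $s$ by $t_k\in R_k$ with $\gamma(t_k,s)<\e_k$ and identify the ``critical level'' $n$ at which $\e_n$ first drops below a threshold involving $\eta\delta$ and $\lambda-\lambda'$. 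For $k>n$, the refinement step at level $k$ places $\eta\delta$-dense neighbours of $s$ around the ancestor $r_{k-1}$ into $R_k(\lambda)$; for $k<n$, the ancestor scale is still large, and the strict positive gap $\alpha_{k,l}(\lambda)-\alpha_{k,l}(\lambda')$ absorbs the $\gamma$-distance from $t_k$ to an ancestor $r_l$, again giving $t_k\in R_k(\lambda)$; and the critical level $n$ itself is bridged by the re-visiting rule, which ensures $J_n(\lambda)$ contains re-approximations of ancestors at level $n$ as well. Picking $t:=t_{k_0}$ for a suitable $k_0$ just above $n$ then yields the required $K_t\subseteq\T_\lambda$ with $\gamma(t,s)<\eta\delta$.

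The main obstacle, foreshadowed explicitly in the introduction, is precisely the critical level $n$: neither the ancestor-scale nor the refinement argument applies cleanly there, and without the re-visiting rule one is forced outside $\T_\lambda$. My plan is to resolve this by ensuring each $J_k$ absorbs re-approximations of \emph{every} previous level, effectively widening $J_k$ across a window of scales. Once the thresholds $\alpha_{k,l}(\lambda)$ and the decay of $\e_k$ are tuned accordingly, the three regimes in the density argument match up, and the remaining properties (closedness, nestedness, $K_{r_0}$-containment, inclusion in $\G$) follow from the construction.
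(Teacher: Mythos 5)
Your overall architecture --- closed level sets $J_k(\lambda)$ built by revisiting all ancestor levels, intersected over $k$, with a three-regime density argument organised around a critical level --- matches the paper's, but the construction as you have written it cannot deliver the conclusion, for two concrete reasons. First, your $J_{k}(\lambda)=\bigcup_{t\in R_{k}(\lambda)}K_t$ is a union of the compacts themselves at the single scale $\e_k$, and your inductive rule only ever adds \emph{new, finer} indices at each level. Consequently a compact $K_t$ selected at level $k_0$ is not contained in $J_{k}(\lambda)$ for $k>k_0$: that set is a locally finite union of much smaller compacts, and nothing forces it to cover $K_t$. The same objection already defeats $K_{r_0}\subseteq\T_\lambda$, and with it non-emptiness. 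The paper avoids this by making each level a union of closed \emph{neighbourhoods} $\ballcl{K_r}{\lambda w}$ of the compacts and, crucially, by re-inserting every compact from every earlier level into every later level as the core of an ever-narrower tube (Lemma~\ref{fromltok}(5), and the chain extension $r'_{m+j}=t$ in the proof of Lemma~\ref{thehardwork}), so that a chosen $K_t$ persists, at distance zero, through all subsequent levels.

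Second, your $\lambda$ enters only through the selection thresholds $\alpha_{k,l}(\lambda)$, i.e.\ through \emph{which} indices are included, whereas the paper uses $\lambda$ to scale the \emph{widths} of tubes around a $\lambda$-independent skeleton. That width is exactly what makes the shallow levels $k<k_0$ work: if $y$ lies in the level-$k$ set at parameter $\lambda'$ with tube width $w_k$, then the entire ball $\ballcl{y}{(\lambda-\lambda')w_k}$ lies in the level-$k$ set at parameter $\lambda$ (Lemma~\ref{thehardwork}(1)), so the new compact $K_t\subseteq\ballcl{y}{2\delta}$ is swallowed automatically once $2\delta\le(\lambda-\lambda')w_k$. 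In your version, showing that the scale-$\e_k$ approximant $t_k$ of $s$ lies in $R_k(\lambda)$ only yields $K_{t_k}\subseteq J_k(\lambda)$; that is a statement about $t_k$, not about the single index $t=t_{k_0}$ you must exhibit, and $K_{t_{k_0}}\subseteq K_{t_k}$ fails in general, so without positive tube widths nothing places $K_{t_{k_0}}$ inside $J_k(\lambda)$ for $k<k_0$. Both defects are repairable by attaching widths and safe neighbourhoods to each index --- which is precisely the paper's tube-triple formalism $(r,w,\alpha)$ --- but as it stands the proposal does not prove the theorem.
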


\begin{remark}\label{rem.wedge}
We prove Theorem~\ref{th.passtoclosed} in Section~\ref{Set_theory}.

Let now
$R = Y \times Y \times Y$ and for each $r = (y_1,y_2,y_3) \in R$
define
$$K_r = W(r)=[y_1,y_2] \cup [y_2,y_3].$$
If we further let $\gamma(K_r,K_s)$ be equal to the standard wedge
distance,
$$\gamma(K_r,K_s)=d(W(r),W(s)),$$
the conclusion of
Theorem~\ref{th.passtoclosed}
is: there exists $\delta_1>0$ such that if $\delta\in(0,\delta_1)$
then $\T_\lambda$ is a $\eta\delta$-wedge approximation for
$\ballcl{y}{\delta}$.
In Lemma~\ref{lem.main-cl}
we show that
this property implies that
$\T_\lambda$ are universal differentiability sets.
We will later easily get that $T_\lambda$ has Hausdorff dimension $1$
by taking the containing $G_\delta$-set $\G$ of Hausdorff dimension $1$.
See Lemma~\ref{lem.gdelta} for the list of properties that we require
$\G$ to satisfy for this.

However, in order to get the conclusion of Theorem~\ref{th.passtoclosed},
one needs to verify condition \eqref{eq.repsilon}.
In the case in which $Y$ is a finite dimensional space, it is easy to see that
since balls in $Y$ are totally bounded sets,
$R=Y^3$
satisfies the required condition.
In case $Y$ is an infinite dimensional space, we
prove this property in Lemma~\ref{lem.reduction}.
\end{remark}

\begin{lemma}\label{lem.main-cl}
Let $Y$ be a Banach space
with separable dual and $(\mathcal W,d)=(\mathcal W_Y,d)$ be the wedge space equipped with the standard wedge distance.
Suppose $\G$ is a $G_\delta$ subset of $Y$ containing a $d$-dense subset of
$\mathcal W$ and the nested collection $(T_\lambda)_{0\le\lambda\le1}$ of
non-empty closed subsets of $Y$, $T_{\lambda'}\subseteq T_{\lambda}$ for
$0\le\lambda'\le\lambda\le1$, satisfies the condition that for
each $\eta>0$, $\lambda\in(0,1]$ and
$y\in\bigcup_{0\le \lambda'<\lambda} T_{\lambda'}$
there is a
$\delta_1=\delta_1(\eta,\lambda,y)>0$
such that for all $\delta\in(0,\delta_1)$ the set $T_\lambda$ is a
$\eta\delta$-wedge approximation for $\ball{y}{\delta}$.

Then
for each $\lambda\in(0,1]$
the set $\T_\lambda$  is a closed universal differentiability set.
Furthermore, for any Lipschitz function $g:Y\to\R$, any
$x\in T_{\lambda'}$, $0\le \lambda' < \lambda\le1$, $r>0$ and any
non-zero continuous linear map
$P \colon Y \to \R$ there exists a finite open interval $I = I_g(x)$ with
$Px \in I$ and
$$
\mu(I\setminus P(T_{\lambda}\cap D_{g,r}(x))) = 0,
$$
where $D_{g,r}(x)$ is the set of points of \frechet{}
differentiability of $g$ in the $r$-neigh\-bour\-hood of $x$ and
$\mu$ denotes the Lebesgue measure.
\end{lemma}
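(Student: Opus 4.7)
The plan is to apply Theorem~\ref{th.main-part1} to a continuous bundle $\pi\colon M\to Y$ whose image satisfies $T_{\lambda'}\subseteq\pi(M)\subseteq T_\lambda$ for every $\lambda'<\lambda$. Both conclusions of the lemma then follow from the corresponding conclusions of that theorem: the inclusion $\pi(M)\subseteq T_\lambda$ makes $T_\lambda$ a universal differentiability set, and for the projection estimate, any $x\in T_{\lambda'}$ lies in $\pi(M)$, so the final clause of Theorem~\ref{th.main-part1} produces a finite open interval $I$ with $Px\in I$ and $\mu(I\setminus P(D_g\cap\ball{x}{r}))=0$; since $D_g\subseteq\pi(M)\subseteq T_\lambda$, we have $D_g\cap\ball{x}{r}\subseteq T_\lambda\cap D_{g,r}(x)$, yielding $\mu(I\setminus P(T_\lambda\cap D_{g,r}(x)))=0$.

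To build $(M,\pi)$, I fix a sequence $\lambda_n\uparrow\lambda$ in $(0,\lambda)$ (say $\lambda_n=\lambda(1-2^{-n})$) and assemble $M$ as a Polish space over the nested closed strata $T_{\lambda_n}$ so that $\pi(M)=\bigcup_n T_{\lambda_n}$. A natural first candidate is a disjoint-union type space with complete metric
\[
d_M((x,n),(y,m))=\|x-y\|+|2^{-n}-2^{-m}|,
\]
together with $\pi(x,n)=x$. Then $\pi$ is continuous, $\pi(M)=\bigcup_n T_{\lambda_n}\subseteq T_\lambda$, and $T_{\lambda'}\subseteq\pi(M)$ for every $\lambda'<\lambda$ (take $n$ with $\lambda_n>\lambda'$).

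The central step is verifying the wedge approximation hypothesis of Theorem~\ref{th.main-part1} at every $(x,n)\in M$. I will apply the hypothesis of the lemma with $y=x\in T_{\lambda_n}$ and level $\lambda_{n+1}$ in place of $\lambda$ (which is admissible since $\lambda_n<\lambda_{n+1}$): this yields $\delta_1>0$ such that for every $\delta\in(0,\delta_1)$ the set $T_{\lambda_{n+1}}$ is a $\delta\eta$-wedge approximation for $\ball{x}{\delta}$. These approximating wedges lie in $\ballcl{x}{(1+\eta)\delta}\cap T_{\lambda_{n+1}}$, a subset of $\pi(M)$, and will lie in $\pi(N(x,n))$ provided the neighborhood $N(x,n)$ of $(x,n)$ in $M$ is chosen so that $\pi(N(x,n))$ contains a neighborhood of $x$ in $T_{\lambda_{n+1}}$.

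The principal technical obstacle is to arrange that this condition holds for \emph{every} open neighborhood $N(x,n)$, including those so small that they remain confined to stratum $n$. For the simple metric above, a ball of radius less than $2^{-(n+1)}$ about $(x,n)$ lies inside stratum $n$ and $\pi$ maps it into $T_{\lambda_n}$, which generally does not contain the approximating wedges (which live in the strictly larger $T_{\lambda_{n+1}}$). The resolution will require either a refinement of the topology on $M$ that links consecutive strata at arbitrarily small scales---for instance identifying $(x,n)$ with $(x,n+1)$ whenever $x\in T_{\lambda_n}$, producing a Polish refinement of the induced topology on $\bigcup_n T_{\lambda_n}$---or an inductive construction in which the $n$-th stratum is further restricted to those $x\in T_{\lambda_{n-1}}$ so that the hypothesis can be applied at level $\lambda_n$ itself and supply wedges lying inside the stratum. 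Once this is in place, Theorem~\ref{th.main-part1} delivers both conclusions of Lemma~\ref{lem.main-cl}.
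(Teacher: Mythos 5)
Your overall strategy --- build a complete metric space $M$ with a continuous $\pi\colon M\to Y$ satisfying $T_{\lambda'}\subseteq\pi(M)\subseteq \T_\lambda$ and feed it to Theorem~\ref{th.main-part1} --- is exactly the paper's, and your derivation of both conclusions from that theorem is fine. But the proof is not complete: you yourself flag the central step (arranging that \emph{every} open neighbourhood of a point of $M$, however small, has image containing the approximating wedges, which live in a strictly larger stratum) and then leave it as ``the resolution will require either \dots or \dots''. Neither of your two suggested fixes works as stated. Identifying $(x,n)$ with $(x,n+1)$ for $x\in T_{\lambda_n}$ identifies every point with all of its copies in higher strata, so the quotient collapses to $\bigcup_n T_{\lambda_n}$ with (essentially) the norm metric --- an increasing union of closed sets that is in general not closed, hence not complete, destroying the hypothesis of Theorem~\ref{th.main-part1}. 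Restricting the $n$-th stratum to $T_{\lambda_{n-1}}$ does not help either: applying the hypothesis at level $\lambda_n$ produces wedges in $T_{\lambda_n}$, i.e.\ in the point-set of stratum $n+1$, which a ball of radius less than the inter-stratum gap still fails to reach. Any discrete indexing of the strata runs into this wall.

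The paper's resolution is to make the stratum index a \emph{continuous} parameter: for $\lambda\in(0,1]$ set
$$X_\lambda=\{(\tau,y):\ 0<\tau<\lambda\ \text{ and }\ y\in \T_{\tau'}\ \text{for every}\ \tau'\in(\tau,1)\},$$
with metric $d((\tau',y'),(\tau,y))=\Delta(\tau',\tau)+\|y'-y\|$, where $\Delta$ is a complete metric on the open interval $(0,\lambda)$, and $\pi(\tau,y)=y$. Completeness holds because $\Delta$ is complete on $(0,\lambda)$ and each $\T_{\tau'}$ is closed (a limit $y$ of points $y_k\in\T_{\tau'}$, valid for all $\tau'$ eventually exceeding $\lim\tau_k$, again lies in every $\T_{\tau'}$ with $\tau'>\lim\tau_k$). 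The key point your discrete construction cannot reproduce is that for \emph{every} $\psi>0$ there is $\tau'\in(\tau,\lambda)$ with $\Delta(\tau',\tau)<\psi$, so the $\psi$-neighbourhood of $(\tau,y)$ in $X_\lambda$ already contains $\{\tau'\}\times(\T_{\tau'}\cap B_\psi(y))$; since $y\in\T_{\tau''}$ for some $\tau''\in(\tau,\tau')$, the hypothesis of the lemma applies at level $\tau'$ and yields, for $\delta<\min\{\delta_1(\eta,\tau',y),\psi/2\}$, wedges in $\T_{\tau'}\cap B_\psi(y)\subseteq\pi(N)$ approximating those in $B_\delta(y)$. Until you supply a construction with this property (or an equivalent one), the proof has a genuine gap at its load-bearing step.
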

\begin{proof}
For every $\lambda\in(0,1]$, define a subset of $(0,\lambda)\times Y$
\begin{equation}\label{eq.defxl}
X_\lambda=\{(\tau,y):
0<\tau<\lambda\text{ and }
y\in \T_{\tau'}\text{ for every }\tau'\in(\tau,1)\}.
\end{equation}
Note that if
 $\tau\in(0,\lambda)$ we have
$X_\lambda\supseteq\{\tau\}\times\T_{\tau}$, so $X_\lambda\ne\emptyset$;
and
for every $(\tau,y)\in X_\lambda$ we necessarily have $y\in\T_\lambda$.
Moreover, if we let $\Delta$ denote a complete metric on
$(0,\lambda)$, then
$$
d((\tau',y'),(\tau,y))=\Delta(\tau',\tau)+\|y'-y\|
$$
makes $X_\lambda$ a  complete metric space, since $T_\lambda$ is closed.

We now check that the conditions of Theorem~\ref{th.main-part1}
are satisfied for
$$M=X_\lambda \text{ and }\pi(\tau,y)=y.$$
Assume we are given $\eta>0$, a point $x=(\tau,y)\in X_\lambda$
and its open neighbourhood $N(x)$. Without loss of generality we may assume there is $\psi>0$ such that
$$
N(x)=\{(\tau',y')\in X_\lambda:\Delta(\tau',\tau)<\psi
\text{ and }\|y'-y\|<\psi\}.
$$
Then fixing $\tau'\in(\lambda,\tau)$ such that
$\Delta(\tau',\tau)<\psi$ we get
$\pi(N(x))\supseteq \ball{y}{\psi}\cap\T_{\tau'}$.
Define now
$\delta_0(x,N(x),\eta)=\min\{\delta_1(y,\tau',\eta),\psi/2\}$
and assume $\delta\in(0,\delta_0)$. Since $T_{\tau'}$
is a $\delta\eta$-wedge approximation for  $\ball{y}{\delta}$ and
$\delta+\delta\eta<2\delta<\psi$, we conclude that
$\T_{\tau'}$, and therefore
$\pi(N(x))$ as well is a
$\delta\eta$-wedge approximation for  $\overline{B}_\delta(x)$.

The conclusion of Theorem~\ref{th.main-part1} says that
$\pi(X_\lambda)$ is a universal differentiability set. Since
$\pi(X_\lambda)\subseteq\T_\lambda$ we conclude
$\T_\lambda$ is a universal differentiability set,
for every $\lambda\in(0,1]$.

Moreover, if $x\in T_{\lambda'}$ and $0\le\lambda'<\lambda\le1$
we conclude $(\lambda',x)\in X_{\lambda}$
(if $\lambda'=0$ then find $\lambda''\in(0,\lambda)$ and get
$x\in\T_{\lambda''}$ so
$(\lambda'',x)\in X_{\lambda}$).
Then the final part of the lemma follows from the conclusion of
Theorem~\ref{th.main-part1}.
\end{proof}

Lemma~\ref{lem.reduction} shows that most natural choices of $(R,\gamma)$ in $Y$, an infinite dimensional separable Banach space, satisfy the conditions of Theorem~\ref{th.passtoclosed} with $\rho = 1/4$; in particular the conditions are satisfied whenever the collection $(K_r)_{r \in R}$ of compacts is translation invariant, with $\gamma(K_r,x+K_r)\le\|x\|$.

\begin{lemma}
\label{lem.reduction}
Suppose $(Y,\|\cdot\|)$ is an infinite dimensional Banach space, $(R,\gamma)$ is separable and has the property that whenever $r \in R$ and $x \in Y$ then $K_s = x+K_r$ for some $s \in R$ with
\begin{equation}
\label{yacond}
\gamma(s,r) \leq \frac{1}{4\rho}\|x\|.
\end{equation}
Then for every $\e > 0$ there exists a set $R(\e) \subseteq R$ such that
\begin{enumerate}
\item
\label{iunnoone}
for all $r \in R$ there exists $s \in R(\e)$ with $\gamma(s,r) < \e$,
\item
\label{iunnotwo}
if $r,s$ are distinct elements of $R(\e)$ then $\text{dist}(K_r,K_s) > \rho\e$,
\end{enumerate}
where for compact $K,K' \subseteq Y$, we define
$$\text{dist}(K,K') = \inf\{\|k'-k\| \text{ where }k \in K\text{, }k' \in K'\}.$$
\end{lemma}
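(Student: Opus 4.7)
The plan is to take $R(\e)$ of the form $\{s_n:n\ge1\}$, where each $s_n$ is obtained by translating the $n$-th term $r_n$ of a countable $\gamma$-dense sequence in $R$ by a small vector $x_n\in Y$. Fix such a sequence $(r_n)_{n\ge1}$ using separability of $(R,\gamma)$, and construct $x_n\in Y$ inductively so that $\|x_n\|<2\rho\e$ and the translates $K_{r_n}+x_n$ are pairwise at $Y$-distance strictly greater than $\rho\e$. Hypothesis~\eqref{yacond} then yields $s_n\in R$ with $K_{s_n}=K_{r_n}+x_n$ and $\gamma(s_n,r_n)\le\|x_n\|/(4\rho)<\e/2$. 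With this, \eqref{iunnotwo} holds by construction, while \eqref{iunnoone} follows from the triangle inequality: for any $r\in R$ there is some $n$ with $\gamma(r,r_n)<\e/2$, whence $\gamma(r,s_n)<\e$.

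The substance of the proof is the inductive step. Given $x_1,\ldots,x_{n-1}$, unpacking the separation condition $\text{dist}(K_{r_n}+x_n,K_{r_m}+x_m)>\rho\e$ shows that for each $m<n$ this is equivalent to
$$x_n\notin F_{m,n}:=\ballcl{x_m+K_{r_m}-K_{r_n}}{\rho\e},$$
the closed $\rho\e$-neighbourhood of the compact set $x_m+K_{r_m}-K_{r_n}$. Since each such compact is totally bounded, it may be covered by finitely many open balls of radius $\rho\e/2$; enlarging those balls by $\rho\e$ then gives a finite cover of $F_{m,n}$ by closed balls of radius $3\rho\e/2$. Hence the entire forbidden region $\bigcup_{m<n}F_{m,n}$ lies in a finite union of closed balls, each of radius $3\rho\e/2<2\rho\e$, and the task reduces to showing that such a finite collection cannot cover the open ball $\ball{0}{2\rho\e}$.

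This last step is the main obstacle, and is where the hypothesis that $Y$ is infinite-dimensional is essential. Suppose for contradiction $\ball{0}{2\rho\e}\subseteq\bigcup_{i=1}^N\ballcl{y_i}{r}$ for some $r<2\rho\e$. Then $F:=\mathrm{span}(y_1,\ldots,y_N)$ is a proper closed subspace of $Y$, so Riesz's lemma supplies a unit vector $y\in Y$ with $\mathrm{dist}(y,F)>r/(2\rho\e)$. Taking any $t\in(r/\mathrm{dist}(y,F),\,2\rho\e)$, a non-empty interval, the point $ty$ lies in $\ball{0}{2\rho\e}$ yet satisfies $\|ty-y_i\|\ge t\cdot\mathrm{dist}(y,F)>r$ for every $i$, contradicting the covering. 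Hence $\ball{0}{2\rho\e}\setminus\bigcup_{m<n}F_{m,n}$ is non-empty, and any element of it may be chosen as $x_n$, closing the induction.
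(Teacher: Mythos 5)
Your proof is correct, and its overall skeleton matches the paper's: take a countable $\gamma$-dense sequence $(r_n)$, inductively translate each $K_{r_n}$ by a small vector so that the translates are pairwise more than $\rho\e$ apart, use \eqref{yacond} to bound $\gamma(s_n,r_n)$, and finish with the triangle inequality. The difference lies in how the translation vector $x_n$ avoiding the ``forbidden region'' is produced. The paper isolates this as Lemma~\ref{porosityplus}: using an infinite $1$-separated sequence of unit vectors, it finds, for any compact $K$, a point of norm exactly $\e$ at distance more than $\e/3$ from $K$, and applies this to the compact difference set $K_{r_n}-\bigcup_{m<n}K'_m$; this gives $\|y_n\|=3\rho\e$ and separation $\rho\e$ in one stroke. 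You instead cover the $\rho\e$-neighbourhoods of the difference sets $x_m+K_{r_m}-K_{r_n}$ by finitely many closed balls of radius $3\rho\e/2$ (total boundedness) and then invoke Riesz's lemma to show that finitely many closed balls of radius strictly less than $2\rho\e$ cannot cover $\ball{0}{2\rho\e}$. Both routes encode the same phenomenon --- that in infinite dimensions a neighbourhood of a compact set cannot contain a ball --- and your constants ($\|x_n\|<2\rho\e$, hence $\gamma(s_n,r_n)<\e/2$, paired with $\e/2$-density of $(r_n)$) check out. The paper's version is marginally more economical in that Lemma~\ref{porosityplus} is reused to observe that compact sets in infinite-dimensional spaces are porous, whereas your covering-plus-Riesz argument is self-contained and arguably more standard; neither has an advantage in generality.
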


We establish the lemma in a few short steps.

\begin{lemma}
\label{porosityplus}
If $Y$ is an infinite dimensional Banach space and $K \subseteq Y$ is compact then for every $\e > 0$ there exists $y \in Y$ with $\|y\| = \e$ and $\text{dist}(y,K) > \e/3$.
\end{lemma}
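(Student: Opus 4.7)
The plan is to reduce the problem to a finite-dimensional subspace and then apply Riesz's lemma to find a unit vector far from it. The underlying geometric point is that while $K$ is compact, the unit sphere in the infinite-dimensional space $Y$ is not, so we can always slip outside any finite-dimensional approximation of $K$.

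First I would fix $\e > 0$ and use the compactness of $K$ to cover it by finitely many open balls of radius $\e/6$, centred at points $k_1, \ldots, k_n \in Y$. Let $F = \mathrm{span}\{k_1, \ldots, k_n\}$; since $F$ is finite-dimensional it is closed, and since $Y$ is infinite-dimensional $F$ is a proper closed subspace. By Riesz's lemma (applied with any parameter strictly greater than $2/3$, say $3/4$) there exists $u \in Y$ with $\|u\| = 1$ and $\mathrm{dist}(u,F) > 3/4$.

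Now set $y = \e u$, so that $\|y\| = \e$ and $\mathrm{dist}(y,F) > 3\e/4$. For any $k \in K$, choose some $k_i$ with $\|k-k_i\| < \e/6$; since $k_i \in F$,
\[
\|y - k\| \;\geq\; \|y - k_i\| - \|k_i - k\| \;\geq\; \mathrm{dist}(y,F) - \e/6 \;>\; 3\e/4 - \e/6 \;=\; 7\e/12 \;>\; \e/3,
\]
so $\mathrm{dist}(y,K) > \e/3$, as required.

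There is no serious obstacle here: the only ingredient beyond routine manipulation is Riesz's lemma, whose hypothesis (that $F$ be a proper closed subspace) is exactly what the infinite-dimensionality of $Y$ supplies. Should one wish to avoid Riesz, the same conclusion can be extracted directly from the non-total-boundedness of the unit sphere of $Y$: if every $y$ with $\|y\|=\e$ were within $\e/3$ of $K$, then the $\e/6$-net $\{k_1,\ldots,k_n\}$ would give a finite $\e/2$-net for the sphere of radius $\e$, contradicting that this sphere is not totally bounded in any infinite-dimensional Banach space.
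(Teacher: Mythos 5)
Your argument is correct, but it takes a different route from the paper. The paper fixes a $1$-separated sequence $(e_n)$ on the unit sphere and argues by contradiction: if every $\e e_n$ were within $\e/3$ of $K$, the corresponding points $k_n\in K$ would form an infinite $\e/3$-separated subset of $K$, contradicting compactness. You instead reduce $K$ itself to a finite-dimensional subspace via an $\e/6$-net and then apply Riesz's lemma once to step away from that subspace; this is direct rather than by contradiction, and the margin $7\e/12>\e/3$ comes out with room to spare. Both proofs ultimately rest on the same phenomenon (Riesz-type separation in infinite dimensions), so neither buys much over the other, though yours isolates the finite-dimensional approximation of $K$ explicitly while the paper's keeps the separated sequence fixed independently of $K$, which is marginally more economical. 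One small caution about your closing aside: as stated, the non-total-boundedness argument only yields that two $\e$-separated points of the sphere lie in a common closed $\e/2$-ball, i.e.\ are at distance $\leq \e$ from each other, which does not strictly contradict separation $\geq \e$; to make that version airtight you would need either strict inequalities in the net or a $(1+\delta)$-separated sequence. This does not affect your main proof, which is complete as written.
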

\begin{proof}
It is well known that one may find an infinite collection $(e_n)_{n \in \mathbb{N}}$ in $Y$ with $\|e_n\| = 1$ and $\|e_n-e_m\| \geq 1$ for $m \neq n$. Assuming, for a contradiction, that we cannot find $n$ with
$\text{dist}(\e e_n,K) > \e/3$ then we can pick $k_n \in K_n$ for each $n$ with $\|k_n-\e e_n\| \leq \e/3$. It then follows that $\|k_n-k_m\| \geq \e/3$ for all $m \neq n$, contradicting the compactness of $K$.
\end{proof}

\begin{lemma}
\label{perturbcompacts}
If $Y$ is an infinite dimensional Banach space and $(K_n)_{n \geq 1}$ are compact subsets of $Y$ then for any $\e > 0$ we can find $y_n \in Y$ with $\|y_n\| = \e$ for each $n \geq 1$ such that the translates $K_n' := y_n + K_n$ satisfy $\text{dist}(K'_n,K'_m) > \e/3$ for $n \neq m$.
\end{lemma}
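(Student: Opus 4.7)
The plan is to construct the vectors $y_n$ one at a time by induction, applying Lemma~\ref{porosityplus} at each step to a single cleverly chosen compact set that encodes all previously imposed separation constraints.

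First I would rewrite the target inequality in a form suitable for applying the preceding lemma. By definition of distance between sets, $\text{dist}(K'_n,K'_m) > \e/3$ means $\|(y_n+k_n)-(y_m+k_m)\| > \e/3$ for every $k_n \in K_n$, $k_m \in K_m$, which rearranges as
$$
\text{dist}\bigl(y_n,\ y_m+K_m-K_n\bigr) > \e/3.
$$
The key point is that $y_m + K_m - K_n$ is compact (a translate of the difference of two compacts), so the separation requirement for $y_n$ against all previously chosen $y_m$ reduces to a single ``distance from one compact'' condition against the union of these sets.

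Then I would run the induction. Set $y_1$ to be any vector with $\|y_1\|=\e$. Given $y_1,\dots,y_{N-1}$ already chosen to satisfy the pairwise separation, form the compact set
$$
C_N = \bigcup_{m=1}^{N-1}\bigl(y_m + K_m - K_N\bigr),
$$
which is compact as a finite union of compacts. By Lemma~\ref{porosityplus} applied to $C_N$ (and the scale $\e$), there exists $y_N \in Y$ with $\|y_N\|=\e$ and $\text{dist}(y_N,C_N) > \e/3$. Unwinding the reformulation from the previous paragraph gives $\text{dist}(K'_N,K'_m) > \e/3$ for every $m < N$, completing the induction step.

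The only real obstacle is the reformulation: once one notices that the pairwise separation of translates is equivalent to a single distance-from-a-compact-set condition, the statement reduces immediately to Lemma~\ref{porosityplus}. One small bookkeeping point to be careful about is that $K_m - K_N$ involves the still-to-be-fixed $K_N$, but since at step $N$ the set $K_N$ itself is already fixed (we are only choosing $y_N$), this causes no circularity.
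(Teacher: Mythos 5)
Your proof is correct and is essentially the same as the paper's: the paper also inducts, forms the compact difference set $K_n-\bigcup_{m<n}K'_m$ (the negative of your $C_N$), and applies Lemma~\ref{porosityplus} to separate the new translate from all previous ones. The only cosmetic difference is a sign flip in which difference set you apply the lemma to.
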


\begin{proof}

Suppose $n \geq 1$ and we have chosen $(y_m)_{1 \leq m < n}$ such that $\text{dist}(K'_m,K'_l) > \e/3$ for $1 \leq l < m < n$. It suffices to pick $y_n$ such that $\text{dist}(K'_n,K'_m) > \e/3$ for $1 \leq m < n$.

The difference set
$$K := K_{n}-\cup_{1\leq m < n} K'_m = \{k-k' \text{ where }k \in K_{n},\text{ } k' \in K_m \text{ for some } m < n\}$$
is compact so that we may find $y \in Y$ with $\|y\| = \e$ and $\text{dist}(y,K) > \e/3$, using Lemma~\ref{porosityplus}. Then $\text{dist}(0,-y+K) > \e/3$ so that, choosing $y_n = -y$,
\begin{equation*}
\text{dist}(0,K'_n-\cup_{1\leq m < n} K'_m) > \e/3.
\qedhere
\end{equation*}
\end{proof}
\noindent
\textit{Proof of Lemma~\ref{lem.reduction} }
We may assume $R \neq \emptyset$. Let $(r_n)_{n \geq 1}$ be a dense sequence in $R$. By Lemma~\ref{perturbcompacts} we can find $y_n \in Y$ with $\|y_n\| = 3\rho \e$ such that $K'_n := y_n + K_{r_n}$ satisfy $\text{dist}(K'_n,K'_m) > \rho \e$ for $n \neq m$. Now we may pick $r_n'$ with $K_{r_n'} = y_n + K_{r_n} = K_n'$ and
$$\gamma(r_n',r_n) \leq \frac{1}{4\rho}\|y_n\| = \frac{3}{4}\e$$
using~\eqref{yacond}. Setting $R(\e) = \{r_n' \text{ where } n \in \mathbb{N}\}$
we are done. \qed

\vspace{2ex}
\noindent
\textbf{Conclusion.}
We summarise what we have shown and add some further observations. First note, Lemma~\ref{porosityplus} implies that any compact set in an infinite dimensional
space is porous. Now, as any porous
set is not a UDS,
it follows that a UDS cannot be compact in infinite dimensional
spaces.

On the other hand, we now show that
inside any non-empty open set in $Y$ we can find
a closed universal
differentiability set of Hausdorff dimension $1$
which
does not contain any continuous curves: this set can
be chosen to be totally disconnected.

\begin{lemma}\label{lem.gdelta}
Let $Y$ be a non-zero separable Banach space
 and $(\mathcal W,d)=(\mathcal W_Y,d)$ be the wedge space equipped with the standard wedge distance.
Then given any $\phi\in Y^*\setminus\{0\}$
there exists
a  $G_\delta$ subset $\G$ of $Y$ of Hausdorff dimension $1$ such that
$\G$ contains a $d$-dense subset of
$\mathcal W$ and the intersection
$\G\cap(y+\ker\phi)$ is totally disconnected for any $y\in Y$.
\end{lemma}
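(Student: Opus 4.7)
The plan is to construct $\G$ as a countable intersection of open tubular neighbourhoods of a countable family of transverse wedges, choosing the tube widths small enough to simultaneously keep $\dim_{\mathcal H}\G=1$ and to force every hyperplane slice of $\G$ to have Hausdorff dimension zero (hence be totally disconnected).

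Fix a countable dense set $Q\subseteq Y$ and enumerate as $(W_n)_{n\ge 1}$ all wedges $W_n=[a_n,b_n]\cup[b_n,c_n]$ with $a_n,b_n,c_n\in Q$ satisfying $\phi(a_n)\neq\phi(b_n)$ and $\phi(b_n)\neq\phi(c_n)$; i.e., those wedges whose two segments are transverse to $\ker\phi$. A small generic perturbation of the vertices of any wedge in $\mathcal W_Y$ (using density of $Q$ and density of $\phi(Q)$ in $\R$) produces an arbitrarily close approximation of this form, so $(W_n)$ is $d$-dense in $\mathcal W_Y$. Write $L_n=\|b_n-a_n\|+\|c_n-b_n\|$ and
\[
\alpha_n=\min\!\Bigl(\tfrac{|\phi(b_n-a_n)|}{\|b_n-a_n\|},\,\tfrac{|\phi(c_n-b_n)|}{\|c_n-b_n\|}\Bigr)>0,
\]
and pick positive radii $r_{n,k}$ small enough that
\begin{enumerate}
\item[(a)] $r_{n,k}\bigl(1+\|\phi\|/\alpha_n\bigr)\le 2^{-(n+k)}$, and
\item[(b)] $\sum_n (L_n r_{n,k}^{s-1}+r_{n,k}^s)\to 0$ as $k\to\infty$ for every $s>1$.
\end{enumerate}
Both are easy to achieve by a super-fast decay such as $r_{n,k}\le \alpha_n\, 2^{-(n+k)(1+L_n)}/(1+\|\phi\|)$. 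Set $T_{n,k}=\{z\in Y: d(z,W_n)<r_{n,k}\}$, $U_k=\bigcup_n T_{n,k}$, and $\G=\bigcap_{k\ge 1}U_k$. Then $\G$ is $G_\delta$, contains every $W_n$ (hence a $d$-dense subfamily of $\mathcal W_Y$), and a standard ball cover of each $T_{n,k}$ by roughly $L_n/r_{n,k}$ balls of diameter $2r_{n,k}$, combined with (b), yields $\dim_{\mathcal H}\G\le 1$; the reverse inequality is immediate since $\G$ contains a line segment.

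For the total disconnectedness of $\G\cap H$ with $H=y+\ker\phi$, transversality enters through the following geometric estimate. Parametrise a segment of $W_n$ as $a_n+sv$ with $v=b_n-a_n$, and suppose $z\in H$ satisfies $\|z-(a_n+sv)\|<r_{n,k}$ for some $s\in[0,1]$; applying $\phi$ yields $|s\phi(v)-(\phi(y)-\phi(a_n))|\le\|\phi\|r_{n,k}$, so $s$ lies within $\|\phi\|r_{n,k}/|\phi(v)|$ of the extended-segment crossing parameter $s_p=(\phi(y)-\phi(a_n))/\phi(v)$. Setting $q_n$ to be the crossing point $a_n+s_p v$ if $s_p\in[0,1]$ and the nearest endpoint otherwise, the triangle inequality gives $\|z-q_n\|\le r_{n,k}(1+\|\phi\|/\alpha_n)\le 2^{-(n+k)}$. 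Thus $T_{n,k}\cap H$ is contained in the union of at most two balls in $H$ of diameter $\le 2^{1-(n+k)}$, yielding for each $s>0$
\[
\mathcal H^s_{2^{1-k}}(\G\cap H)\le \mathcal H^s_{2^{1-k}}(U_k\cap H)\le 2\sum_{n\ge 1}\bigl(2^{1-(n+k)}\bigr)^s=\frac{2^{1-ks}}{1-2^{-s}}\xrightarrow[k\to\infty]{}0.
\]
Hence $\dim_{\mathcal H}(\G\cap H)=0$. Since any connected metric space with at least two points has Hausdorff dimension at least $1$ (the $1$-Lipschitz map $z\mapsto\|z-a\|$ projects such a space onto a non-degenerate interval of $\R$), it follows that $\G\cap H$ is totally disconnected, as required.

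The main obstacle I anticipate is the tension between $d$-density -- which forces the construction to approximate wedges almost parallel to $\ker\phi$ and hence to include $W_n$ with arbitrarily small transversality constants $\alpha_n$ -- and the hyperplane-slice estimate, which needs $r_{n,k}$ to be small compared to $\alpha_n$. Condition~(a) is precisely what encodes this $n$-dependent smallness of the tube widths, while (b) maintains the dimension bound; the freedom to shrink $r_{n,k}$ independently at each pair $(n,k)$ is what allows both conditions to be satisfied simultaneously.
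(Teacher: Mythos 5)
Your proof is correct and follows essentially the same route as the paper's: a dense countable family of wedges transverse to $\ker\phi$, surrounded by tube neighbourhoods whose widths shrink fast enough that each hyperplane slice of a tube is trapped near its (at most two) crossing points --- your bound $\|z-q_n\|\le r_{n,k}(1+\|\phi\|/\alpha_n)$ is exactly the paper's estimate $2a(1+\|\phi\|/|\phi(e)|)$ for the diameter of the slice of a tube. The only cosmetic difference is that you build the dimension-one bound directly into the same tube system via condition (b), whereas the paper intersects with a pre-existing $G_\delta$ set of Hausdorff dimension one containing the wedges, and you conclude total disconnectedness from dimension-zero slices rather than from the vanishing sum of slice diameters; both variants are fine.
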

\begin{proof}
Let $\mathcal W_0\subseteq\mathcal W$ be a $d$-dense countable subset.
Note that
$$
\mathcal W_1=\{W=(y_1,y_2,y_3)\in\mathcal W_0:
\phi(y_1)\ne\phi(y_2)\text{ and }\phi(y_2)\ne\phi(y_3)\}
$$
is then also
$d$-dense in $\mathcal W$.
Let $\G'\subseteq Y$ be a $G_\delta$ set  of Hausdorff dimension $1$
such that $W\subseteq \G'$ for all $W\in\mathcal W_1$.
Let further $\{W_1,W_2,\dots\}$ be an enumeration of $\mathcal W_1$.

Let $L=y_0+\real e$ be a line through one of the sides of a wedge
$W\in\mathcal W_1$ and $a>0$. Then, for any $y\in Y$,
the diameter of the intersection of
$\ball{L}{a}$ with $y+\ker\phi$ does not exceed $2a(1+\|\phi\|/|\phi(e)|)$. Therefore
if we let the countable set $(e_{1,i},e_{2,i})_{i\ge1}$ be the
pairs of directions of sides of all wedges in $\mathcal W_1$, the open set
$\G_n=\bigcup_{i\ge1} \ball{W_i}{\e_i}$ intersects $y+\ker\phi$
for any $y\in Y$ by a set of diameter less than $1/n$, whenever
$$\displaystyle 0<\e_i<1/\Bigl(n2^{i+1}\bigl(1+\frac{\|\phi\|}
{\min\{|\phi(e_{1,i})|,|\phi(e_{2,i})|\}}\bigr)\Bigr).$$

Thus the conclusion of the lemma is satisfied for $\G=\G'\cap\bigcap_{n\ge1}\G_n$.
\end{proof}
\begin{theorem}
\label{th.UDtot}
Let $Y$ be a  Banach space with separable dual. Then for every open
set $U\subseteq Y$ and $y_0\in U$ there is a closed
set $S\subseteq U$ of Hausdorff
dimension $1$ such that $y_0 \in S$ and
every locally Lipschitz function
$f$ defined on a domain containing $U$ has a point of
\frechet{} differentiability inside $S$. Moreover, the set
$S$ may be chosen to be in addition
totally disconnected so that it contains no non-constant continuous curves.
\end{theorem}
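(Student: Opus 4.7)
The plan is to combine Lemma~\ref{lem.main-cl}, which produces a closed universal differentiability set $\T_\lambda$ together with a strong density statement for its differentiability points inside arbitrarily small neighbourhoods, with Lemma~\ref{lem.gdelta}, which supplies a containing $G_\delta$-set of Hausdorff dimension $1$ whose slices by hyperplanes of the form $y+\ker\phi$ are totally disconnected. I then cut $\T_\lambda$ along a union of parallel hyperplanes determined by a carefully chosen totally disconnected subset of $\R$, and intersect with a small closed ball around $y_0$.

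\textbf{Setup.} Fix $r_0>0$ with $\ballcl{y_0}{2r_0}\subseteq U$. A locally Lipschitz $f$ on $U$ then restricts to a Lipschitz function on $\ballcl{y_0}{r_0}$, which extends (e.g.\ via the McShane formula) to a Lipschitz $g\colon Y\to\R$; any Fr\'echet differentiability point of $g$ inside $\ball{y_0}{r_0}$ is one for $f$ as well. Pick a nonzero $\phi\in Y^*$ and apply Lemma~\ref{lem.gdelta}, arranging (by throwing one extra element into the countable dense family $\mathcal W_1$ used in that proof) that the resulting $G_\delta$ set $\G$ of Hausdorff dimension $1$ contains a fixed non-degenerate wedge $W_0\ni y_0$ on whose two segments $\phi$ is non-constant. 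Take $(K_r)_{r\in R}$ to be the family of all wedges in $Y$ with the standard wedge distance; Lemma~\ref{lem.reduction} (or total boundedness of balls in the finite dimensional case) verifies hypothesis \eqref{eq.repsilon} of Theorem~\ref{th.passtoclosed}. Applying that theorem with $K_{r_0}=W_0$ produces a nested closed family $(\T_\lambda)_{0\le\lambda\le1}\subseteq\G$ each containing $W_0\ni y_0$; by Remark~\ref{rem.wedge} and Lemma~\ref{lem.main-cl} the set $\T_1$ is then a closed UDS and the density statement holds: for every Lipschitz $h\colon Y\to\R$, every $x\in\T_0$ and every $r>0$ there is a finite open interval $I\ni\phi(x)$ with
$$\mu(I\setminus\phi(\T_1\cap D_{h,r}(x)))=0.$$

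\textbf{Slicing.} Choose a closed set $C\subseteq\R$ with empty interior (hence totally disconnected, since any connected subset of $\R$ is an interval), with $\phi(y_0)\in C$ and with $\mu(C\cap J)>0$ for every nonempty open interval $J$; such a $C$ is a standard construction, for instance the complement in $\R$ of a suitably chosen dense open set of locally finite measure that does not swallow any subinterval. Define
$$S=\T_1\cap\phi^{-1}(C)\cap\ballcl{y_0}{r_0}.$$
Then $S$ is closed, $S\subseteq U$, $y_0\in S$, and $S\subseteq\G$ inherits Hausdorff dimension at most $1$. Total disconnectedness of $S$: any connected subset projects under $\phi$ into a connected subset of $C$, hence onto a single point $t$, so lies in the slice $\G\cap\phi^{-1}(t)$, which is totally disconnected by the choice of $\G$.

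\textbf{Differentiability.} Given locally Lipschitz $f$ on $U$ with Lipschitz extension $g$ as above, apply the density statement of Lemma~\ref{lem.main-cl} with $h=g$, $x=y_0\in\T_0$, $r=r_0$, $P=\phi$ to obtain a finite open interval $I\ni\phi(y_0)$ with $\mu(I\setminus\phi(\T_1\cap D_{g,r_0}(y_0)))=0$. Since $\mu(C\cap I)>0$, the intersection $C\cap\phi(\T_1\cap D_{g,r_0}(y_0))\cap I$ is nonempty, producing $y\in\T_1\cap\ball{y_0}{r_0}$ with $\phi(y)\in C$ at which $g$ is Fr\'echet differentiable. Then $y\in S$ and, as $g\equiv f$ on $\ball{y_0}{r_0}$, $f$ is Fr\'echet differentiable at $y$. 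Hence $S$ has the required UDS property, Lemma~\ref{lem.hausd} upgrades its Hausdorff dimension to exactly one, and total disconnectedness automatically rules out non-constant continuous curves. The main technical nuisances to be handled are ensuring the specific wedge $W_0$ through $y_0$ can be inserted into the dense family of Lemma~\ref{lem.gdelta} (so that $y_0\in\bigcap_\lambda\T_\lambda$) and the construction of a closed totally disconnected $C\subseteq\R$ with positive measure on every interval; both are small modifications of standard constructions, while the genuinely substantive ingredients are Lemmas~\ref{lem.gdelta},~\ref{lem.main-cl} and the measure-theoretic density clause supplied by the second half of Lemma~\ref{lem.main-cl}.
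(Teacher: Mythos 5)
Your proposal follows essentially the same route as the paper's proof of Theorem~\ref{th.UDtot}: Lemma~\ref{lem.gdelta} supplies the containing $G_\delta$ set with totally disconnected hyperplane slices, Theorem~\ref{th.passtoclosed} (via Remark~\ref{rem.wedge} and Lemma~\ref{lem.reduction}) supplies the nested closed family $(\T_\lambda)$, the measure-theoretic density clause of Lemma~\ref{lem.main-cl} does the real work, and the final set is the intersection of a $\T_\lambda$ with $\phi^{-1}(C)$ and a closed ball. (The paper arranges $y_0\in\T_{\lambda'}$ by translating the whole construction rather than by seeding the dense wedge family with a wedge through $y_0$; both devices are legitimate.) The extension step, the total-disconnectedness argument via projecting connected sets into $C$ and then into the slices $\G\cap\phi^{-1}(t)$, and the dimension count via Lemma~\ref{lem.hausd} all match the paper.

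There is, however, one step that fails as written: no closed set $C\subseteq\R$ with empty interior can satisfy $\mu(C\cap J)>0$ for \emph{every} nonempty open interval $J$. Indeed, $\R\setminus C$ is then a nonempty open set, so it contains an interval $J_0$ with $C\cap J_0=\emptyset$; for the same reason, a dense open set necessarily contains subintervals, so the construction you sketch for $C$ cannot be carried out. What your argument actually uses is only that $\mu(C\cap I)>0$ for the one interval $I\ni\phi(y_0)$ produced by Lemma~\ref{lem.main-cl}, and for this the correct and achievable hypothesis is the one the paper imposes: $C$ closed, totally disconnected, containing $\phi(y_0)$ after a shift, and such that every neighbourhood of each of its points meets $C$ in a set of positive measure --- a translated fat (Smith--Volterra--Cantor) set has all of these properties. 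With that replacement your proof is correct and coincides with the paper's.
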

\begin{proof}
Fix any non-zero continuous linear map $P:Y\to\real$.
Let $\G$ be a $G_\delta$ subset of $Y$ satisfying Lemma~\ref{lem.gdelta}
with $\phi=P$.
By Remark~\ref{rem.wedge} and Lemma~\ref{lem.reduction}
we can apply Theorem~\ref{th.passtoclosed} in order to get a nested sequence
of closed sets $T_\lambda\subseteq \G$ satisfying the hypothesis of
Lemma~\ref{lem.main-cl}.

By translation we may assume without loss
of generality $y_0\in T_\lambda$
for some $\lambda\in[0,1)$. Let
$\lambda_0\in(\lambda,1]$
and  $r_0>0$ be such that
$\ball{y_0}{r_0}\subseteq U$; define $S_1=\T_{\lambda_0}\cap\ballcl{y_0}{r_0/2}$.

Let $C\subseteq[0,1]$ be a closed totally disconnected set of
positive measure, such that every neighbourhood of any of its points intersects
$C$ by a set of positive measure. An example of such set could be
a Cantor set of positive measure.

Let $C_0$ be a shift of $C$ such that $Py_0\in C_0$.
Consider a set
$$
S=P^{-1}(C_0)\cap S_1
=P^{-1}(C_0)\cap\T_{\lambda_0}\cap\ballcl{y_0}{r_0/2}.
$$
We clearly have $y_0\in S\subseteq \ballcl{y_0}{r_0/2}\subseteq U$.
Note further that as $P^{-1}(c)\cap \G$ is totally disconnected
for every $c\in C$, and $C$ is totally disconnected by itself, the set
$S$ set is totally disconnected. It is also clear $S$ is closed and
$\text{dim}_{\mathcal H}(S)\le1$ as $\text{dim}_{\mathcal H}(\G)=1$
and $S\subseteq \G$.
It remains to verify that
every locally Lipschitz function defined on a domain containing $U$
has a point of differentiability in $S$. By Lemma~\ref{lem.hausd}
this would also imply $\text{dim}_{\mathcal H}(S)=1$.

Let $f:U'\to\real$ be a locally Lipschitz function
with domain $U'$ containing $U$.
Then for the restriction $f|_{\ball{y_0}{r_0}}$ there exists a Lipschitz
extension $\tilde f$ to the whole space $Y$; one can take for example
$\tilde f(x)=\inf_{y\in\ball{y_0}{r_0}}(f(y)+L\|y-x\|)$, where
$L\ge\text{Lip}(f|_{\ball{y_0}{r_0}})$.

Let $D_{\tilde f}$ be the set of points of \frechet{} differentiability of
$\tilde f$ inside $T_{\lambda_0}$.
By Lemma~\ref{lem.main-cl} there exists a finite open interval
$I=I_{\tilde f}(y_0)\ni Py_0$ such that almost every point in $I$
belongs to $P(S_1\cap D_{\tilde f})$.
Since $Py_0\in C_0$,
we can find a nearby point that belongs to $C_0\cap P(S_1\cap D_{\tilde f})$.
This means $P^{-1}(C_0)$ intersects $S_1\cap D_{\tilde f}$. As
$f$ coincides with $\tilde f$ on $\ball{y_0}{r_0/2}$ and
$D_{\tilde f}\subseteq\T_{\lambda_0}$ we conclude
there is a point of \frechet{} differentiability of $f$
that belongs to
$$P^{-1}(C_0)\cap\T_{\lambda_0}\cap\ballcl{y_0}{r_0/2}=S.$$
\end{proof}

\section{Differentiability}
\label{Differentiability}
We start this section by quoting \cite[Lemma 4.2]{DM}:

\begin{lemma}\label{lem.max}
Let $(Y,\|\cdot\|)$ be a Banach space,
$f\colon  Y \rightarrow \mathbb{R}$ be a Lipschitz function with Lipschitz
constant $\mathrm{Lip}(f) > 0$ and let $\e\in(0,\mathrm{Lip}(f)/9)$.
Suppose $y\in Y$, $e\in S(Y)$ and $s > 0$ are such that
the directional derivative $f'(y,e)$ exists, is non-negative and
\begin{equation}
\label{eq.80lip}
|f(y+te)-f(y)-f'(y,e)t| \leq \frac{\e^{2}}{160\mathrm{Lip}(f)} |t|
\end{equation}
for $|t| \leq s\sqrt{\frac{2\mathrm{Lip}(f)}{\e}}$.
Suppose further $\xi\in(-s/2,s/2)$
and $\lambda \in Y$ satisfy
\begin{align}
\label{eq.est160}&|f(y+\lambda)-f(y+\xi e)| \geq 240\e s,\\
\label{eq.lambdaxi}&\|\lambda-\xi e\| \leq s\sqrt{\frac{\e}{\mathrm{Lip}(f)} }\\
\label{eq.estup}\text{and \quad}&\frac{\|\pi se+\lambda\|}{|\pi s+\xi|} \leq 1 + \frac{\e}{4\mathrm{Lip}(f)}
\end{align}
for $\pi = \pm 1$. Then if $s_{1},s_{2},\lambda' \in Y$ are such that
\begin{equation}\label{eq.ests1s2}
\max(\|s_{1}-se\|,\|s_{2}-se\|) \leq \frac{\e^{2}}{320\mathrm{Lip}(f)^{2}}s
\end{equation}
and
\begin{equation}\label{eq.estlam}
\|\lambda'-\lambda\| \leq \frac{\e s}{16\mathrm{Lip}(f)},
\end{equation}
we can find
$y' \in [y-s_1,y+\lambda'] \cup [y+\lambda',y+s_2]$ and $e' \in S(Y)$ such
that the directional derivative $f'(y',e')$ exists,
$f'(y',e') \geq f'(y,e) + \e$
and for all $t \in \mathbb{R}$ we have
\begin{align}\label{eq.condi}
&|(f(y'+te)-f(y'))-(f(y+te)-f(y))|\\
&\notag\leq
25\sqrt{(f'(y',e')-f'(y,e))\mathrm{Lip}(f)}|t|.
\end{align}
\end{lemma}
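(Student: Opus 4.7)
The plan is to prove Lemma~\ref{lem.max} in three steps: a symmetry reduction, a one-dimensional mean value argument on one side of the (perturbed) wedge to locate $(y',e')$, and a careful decomposition verifying the comparison inequality~\eqref{eq.condi}. By~\eqref{eq.est160}, one of $\pm(f(y+\lambda)-f(y+\xi e)) \geq 240 \e s$ must hold; I assume without loss of generality $f(y+\lambda)-f(y+\xi e) \geq 240\e s$ and focus on the segment $[y-s_1,y+\lambda']$, the opposite sign being handled symmetrically on $[y+\lambda',y+s_2]$ with the roles of $s_1$ and $s_2$ swapped.

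Set $L_1 := \|s_1+\lambda'\|$ and $u_1 := (s_1+\lambda')/L_1$. The Lipschitz function $r \mapsto f(y-s_1+r u_1)$ on $[0,L_1]$ is absolutely continuous, with its almost everywhere derivative equal to the directional derivative $f'(y-s_1+r u_1,u_1)$ at points where it exists. I would estimate the chord $f(y+\lambda')-f(y-s_1)$ by telescoping through $y-s_1,y-se,y,y+\xi e,y+\lambda,y+\lambda'$: the outer two increments are controlled via $\mathrm{Lip}(f)$ together with~\eqref{eq.ests1s2} and~\eqref{eq.estlam}; the next two via~\eqref{eq.80lip} applied at $|t|=s$ and $|t|=|\xi|<s/2$, both safely within its range; and the middle increment is the hypothesised jump~\eqref{eq.est160}. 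Combining with the upper bound $L_1 \leq (s+\xi)\bigl(1+\e/(4\mathrm{Lip}(f))\bigr) + O(\e s/\mathrm{Lip}(f))$ obtained from~\eqref{eq.estup},~\eqref{eq.ests1s2} and~\eqref{eq.estlam}, and using that $|\xi|<s/2$ keeps $s+\xi$ and $L_1$ comparable to $s$, I obtain that the average slope $A_1 := (f(y+\lambda')-f(y-s_1))/L_1$ satisfies $A_1 \geq f'(y,e)+C\e$ for some explicit constant $C$ (chosen large enough to absorb the constants in step~3). Integrating the a.e.\ derivative then yields $r_0 \in (0,L_1)$ at which $f'(y-s_1+r_0 u_1,u_1) \geq A_1$; set $y' := y-s_1+r_0 u_1$ and $e' := u_1$.

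To verify~\eqref{eq.condi}, write $y'-y = \beta e + \eta$ with
\[
\beta := -(1-r_0/L_1)s + (r_0/L_1)\xi,
\qquad
\eta := -(1-r_0/L_1)(s_1-se) + (r_0/L_1)\bigl((\lambda-\xi e)+(\lambda'-\lambda)\bigr),
\]
so that $|\beta| \leq s$ and~\eqref{eq.ests1s2},~\eqref{eq.lambdaxi},~\eqref{eq.estlam} together give $\|\eta\| \leq 3s\sqrt{\e/\mathrm{Lip}(f)}$. Split the left-hand side of~\eqref{eq.condi} as $\Delta_1(t)+\Delta_2(t)$, where
\[
\Delta_2(t) := \bigl(f(y+(\beta+t)e)-f(y+\beta e)\bigr) - \bigl(f(y+te)-f(y)\bigr)
\]
and $\Delta_1(t)$ collects the remaining off-axis difference obtained by transporting through $\eta$. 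Bound $|\Delta_2(t)|$ via three applications of~\eqref{eq.80lip} by $O\bigl(\e^2(|\beta+t|+|\beta|+|t|)/\mathrm{Lip}(f)\bigr)$ on the range $|t|,|\beta|,|\beta+t| \leq s\sqrt{2\mathrm{Lip}(f)/\e}$. Bound $|\Delta_1(t)|$ by combining the two Lipschitz estimates $|\Delta_1(t)| \leq 2\mathrm{Lip}(f)\|\eta\|$ and $|\Delta_1(t)| \leq 2\mathrm{Lip}(f)|t|$, so that $|\Delta_1(t)| \leq 2\mathrm{Lip}(f)\min(|t|,\|\eta\|)$. For $|t|$ outside the range of~\eqref{eq.80lip}, replace the above by the crude bound $|\text{LHS}| \leq 2\mathrm{Lip}(f)(s+\|\eta\|)$. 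Patching these regimes together and using $f'(y',e')-f'(y,e) \geq C\e$ from step~2, each contribution drops below $25\sqrt{(f'(y',e')-f'(y,e))\mathrm{Lip}(f)}\,|t|$.

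The main obstacle is the third step: the right-hand tolerance in~\eqref{eq.condi} scales as $\sqrt{\e\mathrm{Lip}(f)}\,|t|$, while the naive Lipschitz bound on the left is $\mathrm{Lip}(f)|t|$, so the target inequality is considerably tighter and forces all error terms to be controlled simultaneously. The small-$|t|$ regime is particularly delicate because no approximate linearity at $y'$ in direction $e$ is available, and only the combination of~\eqref{eq.80lip} with the smallness of $\|\eta\|$ forced by~\eqref{eq.lambdaxi} saves the estimate. The numerical constants $160$, $240$, $320$ and $25$ appearing in the hypothesis and conclusion are tuned precisely so that every regime closes with just enough slack; any generous rounding would necessitate a corresponding tightening of~\eqref{eq.est160}--\eqref{eq.estlam}.
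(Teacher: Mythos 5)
First, a point of reference: the paper does not prove this lemma at all --- it is quoted verbatim from \cite{DM} (Lemma 4.2 there), so there is no in-paper proof to compare against. Judged on its own terms, your proposal reproduces the standard opening moves (symmetry reduction, telescoping the chord through $y-s_1, y-se, y, y+\xi e, y+\lambda, y+\lambda'$, and a mean value argument giving a point on the segment with derivative at least the average slope $A_1 \geq f'(y,e)+C\e$), and that much is sound and does yield $f'(y',e') \geq f'(y,e)+\e$. But step 3 has a genuine gap that the argument as written cannot close.

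The problem is the regime $|t| \lesssim s\sqrt{\e/\mathrm{Lip}(f)}$ in the verification of \eqref{eq.condi}, and it affects both of your error terms. Write $L=\mathrm{Lip}(f)$. For $\Delta_1$ your bound is $2L\min(|t|,\|\eta\|)$ with $\|\eta\| \leq 3s\sqrt{\e/L}$; at $|t| \sim s\sqrt{\e/L}$ this gives $\approx 2s\sqrt{\e L}$, while the target $25\sqrt{(f'(y',e')-f'(y,e))L}\,|t|$ is only guaranteed to be of order $25s\e$ (since the derivative gap need only be of order $\e$), so you would need $L \lesssim 156\,\e$ --- but the hypothesis only gives $\e < L/9$, with no lower bound on $\e/L$. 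For $\Delta_2$, three applications of \eqref{eq.80lip} leave the additive term $\tfrac{\e^2}{160L}\cdot 2|\beta| \approx \tfrac{\e^2 s}{80L}$, which does not vanish as $t\to 0$ and hence cannot be dominated by any multiple of $|t|$. Your closing remark that the smallness of $\|\eta\|$ ``saves the estimate'' is precisely the step that is not, and cannot be, carried out by these means.

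The missing idea is that $y'$ cannot be an arbitrary point where the a.e.\ derivative along the segment exceeds the average slope. One needs a refined one-dimensional mean-value lemma (in the spirit of \cite[Lemma 3.4]{P}, and this is how \cite{DM} proceeds): among the points where $h'(\theta)$ exceeds the average, one selects, by a variational or stopping-time argument, a point at which in addition the increments of $h$ near $\theta$ are controlled by $h'(\theta)t + O\bigl(\sqrt{(h'(\theta)-A)\,L}\,|t|\bigr)$. This extra control at the chosen point is exactly what makes the right-hand side of \eqref{eq.condi} scale like $\sqrt{(f'(y',e')-f'(y,e))L}\,|t|$ rather than like $L|t|$, and it is the only mechanism by which the small-$|t|$ regime closes. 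Without importing that lemma (or reproving it), the proposal does not establish \eqref{eq.condi}.
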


Our next lemma is crucial for the proof of Theorem~\ref{th.main-part1}
and enables us to demonstrate the universal differentiability property of
the set by finding a point $y$ with almost maximal directional derivative
and a family of sets around $y$ with wedge approximation for arbitrarily
small balls around $y$.
See the definition of wedge approximation in Section~\ref{sec.def}.

\begin{lemma}[Differentiability Lemma]\label{lem.diff}
Let $(Y,\|\cdot\|)$ be a Banach space such that the norm $\|\cdot\|$
is \frechet{} differentiable on $Y\setminus\{0\}$.
Let $f:Y\to\real$ be a Lipschitz function and
$$(y,e)\in Y\times S(Y)$$
be such that the directional derivative
$f'(y,e)$ exists and is non-negative. Suppose that there is a family of
sets $\{F_\e\subseteq Y:\e>0\}$ such that
\begin{enumerate}
\item
whenever $\e,\theta>0$ there exists $\delta_*=\delta_*(\e,\theta)>0$ such that for any
$\delta\in(0,\delta_*)$
the set $F_\e$ is a $\delta\theta$-wedge approximation for
$\ball{y}{\delta}$.
\item
whenever $(y',e')\in F_\e\times S(Y)$ is such that the directional derivative
$f(y',e')$ exists, $f'(y',e')\ge f'(y,e)$ and
for any $t\in\real$ \eqref{eq.condi} is satisfied, i.e.
$$
|(f(y'+te)-f(y'))-(f(y+te)-f(y))|
\le25\sqrt{(f'(y',e')-f'(y,e))\mathrm{Lip}(f)}|t|,
$$
then $f'(y',e')<f'(y,e)+\e$.
\end{enumerate}
Then $f$ is \frechet{} differentiable at $y$.
\end{lemma}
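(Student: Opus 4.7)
The plan is to argue by contradiction: assume $f$ is not \frechet{} differentiable at $y$ and produce, for some suitably small $\e>0$, a point $y'\in F_\e$ and direction $e'\in S(Y)$ with $f'(y',e')\ge f'(y,e)+\e$ satisfying \eqref{eq.condi}. This will contradict condition (2) of the present lemma. The derivative-increase will come from Lemma~\ref{lem.max}, applied to a wedge drawn inside $F_\e$ using the wedge approximation property (1).

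Let $e^*\in Y^*$ denote the \frechet{} derivative of $\|\cdot\|$ at $e$, so that $e^*(e)=\|e^*\|=1$. The only candidate for the \frechet{} derivative of $f$ at $y$ that is compatible with $f'(y,e)$ is $\phi_0:=f'(y,e)\,e^*$; failure of \frechet{} differentiability therefore yields $\sigma>0$ and a sequence $h_n\to 0$, $h_n\ne 0$, with
$$|f(y+h_n)-f(y)-f'(y,e)\,e^*(h_n)|\ge \sigma\|h_n\|.$$
Write $\xi_n=e^*(h_n)$ and $w_n=h_n-\xi_n e$, so $\|w_n\|\le 2\|h_n\|$ and, crucially, $e^*(w_n)=0$. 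Since $f'(y,e)$ exists, $f(y+\xi_n e)-f(y)-f'(y,e)\xi_n=o(\xi_n)$, and a triangle inequality gives $|f(y+h_n)-f(y+\xi_n e)|\ge (\sigma/2)\|h_n\|$ for all large $n$.

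Fix a large constant $C$ (to be specified), set $\e=\sigma/(960C)$ and $s_n=C\|h_n\|$. For $n$ large enough the inequalities
$$|\xi_n|<s_n/2,\qquad \|w_n\|\le s_n\sqrt{\e/\mathrm{Lip}(f)},\qquad |f(y+h_n)-f(y+\xi_n e)|\ge 240\,\e\, s_n,$$
and the bound \eqref{eq.80lip} at scale $s_n\to 0$, all hold (the second inequality imposes $C\ge 3840\,\mathrm{Lip}(f)/\sigma$). The genuinely delicate estimate is \eqref{eq.estup}, and it is here that \frechet{} differentiability of the norm is used essentially. With $u=w_n/(\pm s_n+\xi_n)$ we have $\|u\|\le 2\|w_n\|/s_n\le 4/C$ and $e^*(u)=0$, so
$$\frac{\|\pm s_n e+h_n\|}{|\pm s_n+\xi_n|}=\|e+u\|=1+e^*(u)+o(\|u\|)=1+o(\|u\|).$$
Since the $o$ here is an absolute modulus depending only on the norm and $e$, and since $4/C$ is independent of $n$, taking $C$ still larger drives this $o(\|u\|)$ below $\e/(4\,\mathrm{Lip}(f))$ uniformly in $n$, verifying \eqref{eq.estup}. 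This is the main obstacle: without \frechet{} differentiability of $\|\cdot\|$ the linear term $e^*(u)$ would not vanish and the ratio estimate would fail.

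Finally, set $\theta=\min\{\e^{2}/(640\,\mathrm{Lip}(f)^2),\ \e/(32\,\mathrm{Lip}(f))\}$ and let $\delta_*=\delta_*(\e,\theta)$ be as in (1). For $n$ so large that $2s_n<\delta_*$, $F_\e$ is a $2\theta s_n$-wedge approximation for $\ball{y}{2s_n}$, which contains the wedge $[y-s_n e,y+h_n]\cup[y+h_n,y+s_n e]$; hence condition (1) yields a wedge $[y-s_1,y+\lambda']\cup[y+\lambda',y+s_2]\subseteq F_\e$ with $\max(\|s_1-s_n e\|,\|s_2-s_n e\|,\|\lambda'-h_n\|)\le 2\theta s_n$, which by the choice of $\theta$ satisfies \eqref{eq.ests1s2} and \eqref{eq.estlam}. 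Lemma~\ref{lem.max} then produces $y'$ on this new wedge (so $y'\in F_\e$) and $e'\in S(Y)$ with $f'(y',e')\ge f'(y,e)+\e$ and the estimate \eqref{eq.condi}. This contradicts condition (2) of the present lemma, so $f$ must be \frechet{} differentiable at $y$.
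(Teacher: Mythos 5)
Your proposal is correct and follows essentially the same route as the paper's proof: reduce to the estimate $f(y+h)-f(y)-f'(y,e)e^*(h)=o(\|h\|)$, take a bad increment $h$ violating it, verify the hypotheses of Lemma~\ref{lem.max} with $\lambda=h$, $\xi=e^*(h)$ and $s$ a large fixed multiple of $\|h\|$ (using the \frechet{} differentiability of the norm precisely for \eqref{eq.estup}, since $e^*(h-\xi e)=0$), then use the wedge approximation of hypothesis (1) to perturb the wedge into $F_\e$ and contradict hypothesis (2). The only differences are cosmetic (a sequence $h_n$ and constant $C$ in place of the paper's parameters $\eta,\Delta$), and your constants all check out.
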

\begin{proof}
We may assume $\mathrm{Lip}(f) = 1$.
Note that the norm $\|\cdot\|$ is differentiable at $e$, let $e^*$
be its derivative at $e$.
We shall prove that $f$ is \frechet{} differentiable at $y$ and that
$f'(y)$ is given by the formula
$$
f'(y)(u)=f'(y,e)e^*(u).
$$

Note that $\|e^*\|=1$ and $e^*(e)=1$. Fix
arbitrary $\eta\in(0,1/3)$. Choose $\Delta\in(0,\eta)$ such that
\begin{equation}\label{eq.normdif}
\Bigl|
\|e+th\|-\|e\|-te^*(h)\Bigr|
\le\eta|t|
\end{equation}
for any $\|h\|\le1$ and $|t|\le\Delta$.

Let $\e=\eta\Delta$ and $\theta=\eta^2\Delta^2/320$.
We know that the directional derivative $f'(y,e)$ exists so that there
we may pick
$\rho \in(0, \delta_*(\e,\theta))$
such that whenever $|t| <\rho$,
\begin{equation}\label{eq.diffxe}
|f(y+te)-f(y)-f'(y,e)t| < \frac{\eta^2\Delta^2}{160}|t|.
\end{equation}

Let $\delta=\frac{1}{32}\rho\Delta\sqrt{\Delta\eta}$.
We plan to show that
\begin{equation}\label{difR}
|f(y+ru)-f(y)-f'(y,e)e^*(u) r| < 5000\eta r
\end{equation}
for any $\|u\|\le1$ and $r\in(0,\delta)$.
This will imply the differentiability of $f$ at $y$.

Assume for a contradiction, that there exist $r\in(0,\delta)$
and $\|u\|\le1$ such that the inequality
\eqref{difR} does not hold:
\begin{equation}\label{nondif}
|f(y+ru)-f(y)-f'(y,e) e^*(u) r|
\geq 5000\eta r.
\end{equation}

Define
\begin{equation*}
s=16 r/\Delta,\qquad
\lambda=ru \qquad
\text{ and }\qquad
\xi=r e^*(u).
\end{equation*}
We check now that all the conditions of
Lemma~\ref{lem.max} are satisfied with
$\e,s,\xi,\lambda$ defined as above.

First of all,
$\e=\eta\Delta<1/9$ and
condition \eqref{eq.80lip} follows from
\eqref{eq.diffxe} as ${\e^2}=\eta^2\Delta^2$ and
$s\sqrt{2/\e}=16\sqrt{2}r/(\Delta\sqrt{\eta \Delta})<
32\delta/(\Delta\sqrt{\eta \Delta})
=\rho$.

Next we check $|\xi|<s/2$ and condition \eqref{eq.est160}.
Indeed, $|\xi|\le r< r/\Delta=s/16<s/2$.
Moreover, $r\le\delta<\rho$,
so that we may apply \eqref{eq.diffxe} with $t=\xi$. Combining this
with \eqref{nondif} we verify condition \eqref{eq.est160}:
$$
|f(y+ru)-f(y+\xi e)|
\ge
5000\eta r-\eta r\frac{\eta\Delta^2}{160}|e^*(u)|
\ge
240\cdot16\eta r
=240s\Delta\eta
=240s\e.
$$

Note $\|\lambda-\xi e\|=r\|u-e^*(u)e\|\le 2r
<16r\sqrt{\eta/\Delta}=s\sqrt{\e}$, condition \eqref{eq.lambdaxi}.
Finally, for $\pi=\pm1$ we have
$|\pi s+\xi|\ge s/2$, thus
$$
t=
\Bigl\|\frac{\pi se+\lambda}{\pi s+\xi}-e\Bigr\|
=\Bigl\|\frac{\lambda-\xi e}{\pi s+\xi}\Bigr\|
\le\frac{2r}{s/2}=
\Delta/4,
$$
and so applying \eqref{eq.normdif} for
$h=\Bigl(\frac{\lambda-\xi e}{\pi s+\xi}\Bigr)/t$
we get
$$
\Bigl\|\frac{\pi se+\lambda}{\pi s+\xi}\Bigr\|
\le
1+e^*\Bigl(\frac{\lambda-\xi e}{\pi s+\xi}\Bigr)
+\eta|t|.
$$
Note that
$e^*\Bigl(\frac{\lambda-\xi e}{\pi s+\xi}\Bigr)=0$
as $e^*(\lambda)=re^*(u)=\xi=e^*(\xi e)$ and
hence \eqref{eq.estup}:
$$
\Bigl\|\frac{\pi se+\lambda}{\pi s+\xi}\Bigr\|
\le
1+\eta\Delta/4
=
1+\e/4.
$$

Define $u_1=-e$, $u_2=e$ and $u_3=({r}/{s})u$.
Note that $r/s=\Delta/16<1$, thus all vectors
$u_1,u_2,u_3$ are in the unit ball.
We also have $s<16\delta/\Delta=\frac{1}{2}\rho\sqrt{\Delta\eta}
<\rho<\delta_*(\Delta\eta,\Delta^2\eta^2/320)$, and therefore
as  $F_\e$ is a $\delta\theta$-wedge approximation for
$\ball{y}{s}$,
we can find $u_1',u_2',u_3'$
such that $\|u_i-u_i'\|<\theta={\Delta^2\eta^2}/{320}$
for $i=1,2,3$ and
$$
[y-s_1,y+\lambda']\cup[y+\lambda',y+s_2']\subseteq F_{\e},
$$
where $s_1=-su_1'$, $s_2=su_2'$ and $\lambda'=su_3'$.
We then have $\|s_i-se\|=s\|u_i-u_i'\|\le{\Delta^2\eta^2}s/{320}$
for $i=1,2$ and
$\|\lambda'-\lambda\|=
\|su_3'-ru\|=
s\|u_3'-u_3\|
\le{\Delta^2\eta^2}s/{320}<{\Delta\eta}s/{16}$, which verifies
\eqref{eq.ests1s2}
and \eqref{eq.estlam}.

Therefore all conditions of Lemma~\ref{lem.max}
are satisfied; hence we may find
$$
y'\in [y-s_1,y+\lambda']\cup[y+\lambda',y+s_2]\subseteq F_{\e}
$$ and a direction $e'\in S(Y)$ for which $f(y',e')$ exists, with
$f'(y',e')\ge f'(y,e)+\e$, and such that
for all $t\in\real$ the inequality \eqref{eq.condi} is satisfied.
But for every pair $(y',e')$ from $F_{\e}\times S(Y)$ that satisfies
\eqref{eq.condi}
we have  $f'(y',e')< f'(y,e)+\e$,
a contradiction.

Hence for every $r\in(0,\delta)\text{ and }\|h\|\le1$,
\eqref{difR} is satisfied.
\end{proof}

\section{Optimisation}
\label{Optimisation}
In this section we prove Theorem~\ref{th.incr}.
It describes how, given a Lipschitz function $g$ on a Banach space $Y$
and a bundle $\pi:M\to Y$, where $(M,d)$ is a complete metric space and
$\pi$ is continuous,
one finds a point $\tilde x \in M$ and direction $\tilde e$ in the unit sphere of $Y$ with almost locally maximal directional derivative.

We describe how to choose the desired sequence of pairs
$$
(x_n,e_n)_{n\ge0}\subseteq M\times S(Y)
$$
as an inductive
procedure. While convergence of $(x_n)_{n\ge0}$ simply follows from
the fact that $x_{n+1}$ is chosen very close to $x_n$, we shall need
additional work in order to obtain the convergence of $e_n$. For this,
we change the norm on each step; see \eqref{eq.pn} and Lemma~\ref{lemleq}.
We then argue in Section~\ref{sec.diff}
that the sequence of norms defined in \eqref{eq.pn}
converges to the norm $\|\cdot\|'$ specified in Theorem~\ref{th.incr}.

Suppose the assumptions of Theorem~\ref{th.incr} are satisfied.
We thus have a Lipschitz function $g$ acting on a Banach space $Y$
such that the set
$$
D=\{(x,e)\in M\times \left(Y\setminus\{0\}\right):
\text{the directional derivative }g'(\pi x,e) \text{ exists}\}
$$
is not empty.
Assume without loss of
generality that $\mathrm{Lip}(g) = 1/3$.

Recall $\|e_0\| = 1$ and
$g'(\pi x_0,e_0) \geq 0$.
Choose $e_0^* \in Y^*$ with $e_0^*(e_0) = 1$ and $\|e_0^*\| = 1$, and
define
\begin{equation}\label{eq.deff-g}
f = g + \frac{2}{3}e_0^*
\end{equation}
so that item (1) of Theorem~\ref{th.incr} is satisfied.
Note that $f-g$ is linear, so $f$ is a Lipschitz function with
$\mathrm{Lip}(f) \leq 1$.
As $f-g$ is linear, the set $D$ is precisely the set of all
$$(x,e) \in M \times (Y\setminus \{0\})$$
such that $f'(\pi x,e)$ exists.

We can immediately make a very simple observation:
if $f'(\pi x_0,e_0) \leq f'(\pi x,e)$ then
$$g'(\pi x_0,e_0) + \frac{2}{3} \leq g'(\pi x,e) + \frac{2}{3} e_0^*(e),$$
so that
\begin{equation}\label{eq.start}
e_0^*(e)\ge1-\frac{3}{2}g'(\pi x,e)\ge\frac{1}{2}.
\end{equation}

Note that for any Lipschitz function $f:Y\to\real$  with
$\mathrm{Lip}(f) \leq  1$ and $x,x' \in M$, $e \in Y$ with $\|e\| \leq 1$, we have
$$
|(f(\pi x'+te)-f(\pi x'))-(f(\pi x+te)-f(\pi x))| \leq 2|t|;
$$
therefore, we may assume that
$\Theta(t) \leq 2$ for all $t > 0$.

We now introduce a function
$\Omega(t):(0,\infty)\to(0,\infty)$ that we are going to use instead of
$\Theta(t)$ in our subsequent argument.

\begin{lemma}
\label{omegalemma}
If $\Theta:(0,\infty)\to(0,2]$ satisfies $\Theta(t)\to0$ as $t\to0$
then
there exists a function $\Omega\colon (0,\infty) \to (0,\infty)$ such that
\begin{enumerate}
\item[(1)]
$\Omega(t) \geq 2\Theta(t)$ for all $t \in \R$,
\item[(2)]
$\Omega(t) \rightarrow 0$ as $t \rightarrow 0^+$,
\item[(3)]
if $A,B > 0$ then $\Omega(A) + 2B \leq \Omega(A+B)$.
\end{enumerate}
\end{lemma}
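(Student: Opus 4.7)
The plan is to construct $\Omega$ in the form
$$\Omega(t)=2t+\Phi(t),$$
where $\Phi\colon(0,\infty)\to[0,\infty)$ will be chosen nondecreasing, tending to $0$ as $t\to 0^+$, and dominating $2\Theta$. With such a choice, condition~(3) is automatic: for any $A,B>0$,
$$\Omega(A+B)-\Omega(A)=2B+\bigl(\Phi(A+B)-\Phi(A)\bigr)\ge 2B$$
by monotonicity of $\Phi$. Condition~(2) then holds because both $2t$ and $\Phi(t)$ tend to $0$, and condition~(1) reduces to $2t+\Phi(t)\ge 2\Theta(t)$, which follows at once from $\Phi(t)\ge 2\Theta(t)$ (discarding the nonnegative term $2t$). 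Thus everything reduces to producing such a $\Phi$.

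The natural candidate is the \emph{monotone envelope}
$$\Phi(t)=\sup_{0<s\le t}2\Theta(s).$$
This supremum is well defined and finite, in fact bounded by $4$, since $\Theta\le 2$ by assumption. It is manifestly nondecreasing, and satisfies $\Phi(t)\ge 2\Theta(t)$ by definition. To check that $\Phi(t)\to 0$ as $t\to 0^+$, fix $\varepsilon>0$ and use the hypothesis $\Theta(s)\to 0$ to choose $\delta>0$ with $2\Theta(s)<\varepsilon$ for all $s\in(0,\delta)$; then for every $t\in(0,\delta)$ we have $\Phi(t)=\sup_{0<s\le t}2\Theta(s)\le\varepsilon$, as required.

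There is no real obstacle here: the lemma is an elementary packaging step that upgrades $\Theta$ to a modulus $\Omega$ growing at least linearly with slope $2$, precisely so that the additive error term $2B$ produced on each step of the forthcoming iterative optimisation (in which the directional derivative increases by some amount $B$) can be absorbed into the increment of $\Omega$. The genuine content of Section~\ref{Optimisation} will lie in the subsequent construction of the sequence $(x_n,e_n)$ and the simultaneous adjustment of the norm on $Y$, not in this preliminary lemma.
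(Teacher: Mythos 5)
Your proof is correct and follows essentially the same route as the paper: both take $\Omega(t)=2t+(\text{a nondecreasing envelope dominating }2\Theta)$, so that (3) reduces to monotonicity of the envelope. The only cosmetic difference is that the paper smooths its envelope by dyadic interpolation to make it continuous and piecewise affine, whereas you use the raw supremum $\sup_{0<s\le t}2\Theta(s)$ directly — which suffices, since continuity is never needed.
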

\begin{proof}
For each $n \in \Z$, define
$\beta(2^n):= \sup_{0 < t' \leq 2^{n+1}} \Theta(t')$.
We may uniquely extend $\beta$ to $(0,\infty)$ by imposing the property that $\beta$ is affine on each interval of the form $[2^n,2^{n+1}]$ for $n \in \Z$. Note that $\beta$ is continuous, increasing and $\beta(t) \geq \Theta(t)$ for every $t > 0$.
Further for $t \leq 2^n$ where $n \in \Z$, we have
$\beta(t) \leq \beta(2^{n}) = \sup_{0 < t' \leq 2^{n+1}} \Theta(t')$
and as $\Theta(t) \to 0$ as $t \to 0^+$ we deduce that $\beta(t) \to 0$ as $t \to 0^+$.

We now let $\Omega(t) = 2\beta(t)+2t$. Then (1) and (2) are immediate
as $\beta(t) \geq \Theta(t)$ and $\beta(t) \to 0$ as $t \to 0^+$.
Finally for (3) we may use the fact that $\beta$ is increasing to deduce that for $A,B > 0$,
$\Omega(A+B)= 2\beta(A+B)+2A+2B\geq 2\beta(A)+2A+2B
= \Omega(A)+2B$.
\end{proof}

We now  define a notion of weight and a class of pairs
that weigh more than the given pair.
\begin{definition}
\label{defnweight}
If $p$ is a norm on $Y$ and $(x,e) \in D$ then we call
$$
w_{p}(x,e) = \frac{f'(\pi x,e)}{p(e)}
$$
the weight of $(x,e)$ with respect to the norm $p$.

Further for $\sigma \geq 0$ we let $G_{p}(x,e,\sigma)$ be the set of all $(x',e') \in D$ such that
\begin{equation}\label{eq.defg1}
w_{p}(x,e) \leq w_{p}(x',e')
\end{equation}
and
\begin{align}
\notag
|(f(\pi x'&+te)-f(\pi x'))-(f(\pi x+te)-f(\pi x))| \\
\label{eq.defg2}
&\leq \left(\sigma + \Omega(w_{p}(x',e')-w_{p}(x,e))\right)|t|
\end{align}
for all $t \in \R$, where the function $\Omega$ is given by Lemma~\ref{omegalemma}.
\end{definition}

In what follows, the notation $\|y- \R e\|$ where $y\in Y$ and
$e\in Y\setminus\{0\}$ is used for the distance between the point $y$
and the one dimensional subspace of $Y$ generated by $e$.
This distance is calculated with the original norm $\| \cdot \|$ on $Y$.

\subsection{Inductive construction}
Let $\sigma_0=16$, $\delta_0 = 1$, $t_{0} \in (0,1/2)$,
the norm $p_0=\| \cdot \|$ and
$w_0 = w_{p_0}$. The pair $(x_0,e_0)$ was chosen earlier.
Below we will define
various positive parameters $\sigma_n,t_n,\e_n,\nu_n,\Delta_n,\delta_n$,
nested sequence $D_n$ of non-empty subsets of $D$ and
pairs $(x_n,e_n)\in D_n$.
For every $n\ge1$, we define
\begin{equation}\label{eq.pn}
p_n(y) = \sqrt{\|y\|^2 + \sum_{m=0}^{n-1} t_m^2 \|y-\R e_m\|^2}
\end{equation}
and let $w_n=w_{p_n}$ be the
weight function defined on $D$. It is clear
\eqref{eq.pn} defines a norm on $Y$
and $p_n(y)\ge\max\{\|y\|,p_{n-1}(y)\}$ for all $y\in Y$.
Together with
$\mathrm{Lip}(f) \leq 1$, this implies
$w_n(x,e) \leq \min\{1,w_{n-1}(x,e)\}$ for any $(x,e) \in D$.

For every $n\ge1$, choose
\begin{equation}\label{eq.param}
\sigma_{n} \in (0,\sigma_{n-1}/16),
t_{n} \in (0,t_{n-1}/2)\textrm{ with }t_n^2 < \sigma_{n-1}/16
\textrm{ and }
\e_{n} \in (0,t_n^2\sigma_n^2/2^{13}).
\end{equation}
Let $D_{n}$ to be the set of all pairs $(x,e) \in D$ with $d(x,x_{n-1}) < \delta_{n-1}$, $\|e\| = 1$ and
$$(x,e) \in G_{p_n}(x_{n-1},e_{n-1},\sigma_{n-1}-\nu)$$
for some $\nu \in (0,\sigma_{n-1}/2)$.
Note that $(x_{n-1},e_{n-1})\in D_n$, and so $D_n\ne\emptyset$.
Since $w_n$ is bounded by $1$ from above we can
choose $(x_{n},e_{n}) \in D_{n}$ such that
for every $(x,e)\in D_n$
\begin{equation}\label{eq.defxnen}
w_n(x,e) \leq w_n(x_n,e_n)+\e_n.
\end{equation}
Note that the definition of $D_n$ then implies
$d(x_n,x_{n-1})<\delta_{n-1}$, and as $(x_n,e_n)\in D_n$ and
$p_n(e_{n-1})=p_{n-1}(e_{n-1})$,
we have for every $n\ge1$
\begin{equation}
\label{eq.co5}
w_{n-1}(x_{n-1},e_{n-1})=
w_n(x_{n-1},e_{n-1})\le w_n(x_n,e_n).
\end{equation}
This implies $w_n(x,e)\ge w_0(x_0,e_0)=f'(\pi x_0,e_0)$ for
every $(x,e)\in D_n$; in particular, \eqref{eq.start} implies
\begin{equation}
\label{eq.e0dn}
e_0^*(e)\ge1/2
\end{equation}
for any $(x,e)\in D_n$.

Let $\nu_{n} \in (0,\sigma_{n-1}/2)$ be such that $(x_n,e_n) \in G_{p_n}(x_{n-1},e_{n-1},\sigma_{n-1}-\nu_{n})$.
Pick $\Delta_n > 0$ such that
\begin{align}\label{wot1}
|f(\pi x_{n}+te_{n})-f(\pi x_{n})-f'(\pi x_n,e_n)t| &\leq \sigma_{n-1}|t|/32\\
\label{wot2}
|f(\pi x_{n-1}+te_{n-1})-f(\pi x_{n-1})-f'(\pi x_{n-1},e_{n-1})t| &\leq \sigma_{n-1}|t|/32
\end{align}
for all $t$ with $|t| \leq 4\Delta_{n}/\nu_{n}$.

Finally choose
$\delta_{n} \in (0,(\delta_{n-1} - d(x_n,x_{n-1}))/2)$ such that
$\|\pi x-\pi x_n\| \leq \Delta_n$ whenever $d(x,x_n) \leq \delta_n$; such a $\delta_n$ exists because $\pi$ is continuous.

Let us make some simple observations.
First of all, \eqref{eq.param} implies that the sequences
$\sigma_n,t_n,\e_n$ all tend to zero. Since $\nu_n<\sigma_{n-1}/2$ and
$\delta_n<(\delta_{n-1}-d(x_n,x_{n-1}))/2$ we conclude that
$\nu_n$ and $\delta_n$ tend to
zero, too. The latter inequality also implies
\begin{equation}\label{eq.balls}
\overline{B}_{\delta_n}(x_{n}) \subseteq \ball{x_{n-1}}{\delta_{n-1}}
\end{equation}
for every $n\ge1$ and so
\begin{equation}\label{convofxn}
d(x_k,x_n)<\delta_n \text{ for all $k\ge n$.}
\end{equation}
Since $M$ is complete
we conclude that the sequence $(x_n)$ converges in $M$ to some
point $\limx$.

The inequality $t_n<t_{n-1}/2$ also implies
$p_n(y)^2  \leq \|y\|^2 + 2t_0^2 \cdot \|y\|^2
\leq 2 \|y\|^2$, so for all $y\in Y$,
\begin{equation}\label{eq.normineq}
\|y\|\le p_n(y)\le2\|y\|.
\end{equation}
Then, using $p_n(e_{n-1}) \leq 2$, we get for every $(x,e)\in D$
\begin{align}
\notag&|f'(\pi x,e)-f'(\pi x_{n-1},e_{n-1})|
\leq 2\frac{|f'(\pi x,e)-f'(\pi x_{n-1},e_{n-1})|}{p_{n}(e_{n-1})}\\
\notag&\leq 2\left|\frac{f'(\pi x,e)}{p_{n}(e)}-\frac{f'(\pi x_{n-1},e_{n-1})}{p_{n}(e_{n-1})}\right|
+ 2|f'(\pi x,e)|\left|\frac{1}{p_{n}(e_{n-1})}-\frac{1}{p_{n}(e)}\right|\\
\notag&\leq 2|w_n(x,e)-w_{n}(x_{n-1},e_{n-1})| + 2\frac{\|e\|}{p_n(e_{n-1})p_n(e)} |p_n(e)-p_n(e_{n-1})|\\
\label{eq.fpin}&\leq 2|w_n(x,e)-w_{n}(x_{n-1},e_{n-1})| + 4 \|e-e_{n-1}\|,
\end{align}
where, in the penultimate line, we are using $\mathrm{Lip}(f) \leq 1$ and, in the final line, $p_n(e) \geq \|e\|$,
$p_n(e_{n-1})\ge\|e_{n-1}\|=1$ and the fact that
$$
|p_n(e)-p_n(e_{n-1})| \leq p_n(e-e_{n-1}) \leq 2\|e-e_{n-1}\|.
$$

We are now ready to prove a very important property of sets
$D_n$; the ``moreover'' part of Lemma~\ref{lemleq}
together with \eqref{eq.param} implies the
convergence of the sequence $(e_n)$
to some $\lime\in Y$ with $\|\lime\|=1$. We will show later that the pair
$(\limx,\lime)$ has the properties required by
Theorem~\ref{th.incr}.

\begin{lemma}
\label{lemleq}
For every $n\ge1$, we have the inclusions
$D_{n+1}\subseteq  G_{p_n}(x_{n-1},e_{n-1},\sigma_{n-1}-\nu_n/2)$
and $D_{n+1} \subseteq D_{n}$.
Moreover, for any $(x,e) \in D_{n+1}$
we have $\|e-e_{n}\| \leq \sigma_{n}/8$.
\end{lemma}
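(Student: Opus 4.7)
The strategy is strong induction on $n \ge 1$, establishing all three claims simultaneously. The base case $n=1$ is handled directly: the difference inequality for $G_{p_1}$ is implied by the universal Lipschitz bound $|\Delta(\cdot)| \le 2|t|$ since $\sigma_0 = 16$ forces $\sigma_0 - \nu_1/2 > 12$, and the direction estimate $\|e_1 - e_0\| \le \sigma_1/8$ follows from the argument used below for the general step (it requires no prior part of the induction). For the inductive step, assume the lemma for all smaller indices and fix $(x,e) \in D_{n+1}$, witnessed by some $\nu \in (0,\sigma_n/2)$ with $(x,e) \in G_{p_{n+1}}(x_n,e_n,\sigma_n - \nu)$.

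The first key point is a weight comparison. Set
\[
A = w_{p_{n+1}}(x,e) - w_{p_{n+1}}(x_n,e_n),\quad C = w_{p_n}(x_n,e_n) - w_{p_n}(x_{n-1},e_{n-1}),
\]
\[
B = w_{p_n}(x,e) - w_{p_n}(x_{n-1},e_{n-1}).
\]
Since $p_{n+1} \ge p_n$ pointwise while $p_{n+1}(e_n) = p_n(e_n)$, one has $w_{p_n}(x,e) \ge w_{p_{n+1}}(x,e)$ and $w_{p_{n+1}}(x_n,e_n) = w_{p_n}(x_n,e_n)$; combined with the inductive $(x_n,e_n) \in D_n$ this gives $B \ge A + C$ and the required $w_{p_n}(x,e) \ge w_{p_n}(x_{n-1},e_{n-1})$. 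For the increment bound, I decompose
\[
\Delta(x,x_{n-1},e_{n-1},t) = \Delta(x,x_n,e_n,t) + \delta(t) + \Delta(x_n,x_{n-1},e_{n-1},t),
\]
where the direction-change term $\delta(t) = \Delta(x,x_n,e_{n-1},t) - \Delta(x,x_n,e_n,t)$ satisfies $|\delta(t)| \le 2|t|\|e_n - e_{n-1}\|$ by $\mathrm{Lip}(f) \le 1$. Using the $G_{p_{n+1}}$ bound on the first piece and the $G_{p_n}(x_{n-1},e_{n-1},\sigma_{n-1} - \nu_n)$ bound on the last, then invoking property (3) of Lemma~\ref{omegalemma} via $\Omega(C) + 2A \le \Omega(C + A) \le \Omega(B)$ and $\Omega(A) = 2A + 2\beta(A)$, the task reduces to
\[
\sigma_n + 2\|e_n - e_{n-1}\| + 2\beta(A) \le \nu_n/2,
\]
which closes using the inductive bound $\|e_n - e_{n-1}\| \le \sigma_{n-1}/8$, the decay $\sigma_n \le \sigma_{n-1}/16$, and the smallness $A \le \e_n \le t_n^2\sigma_n^2/2^{13}$ (so $\beta(A)$ is negligible).

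The second inclusion $D_{n+1} \subseteq D_n$ is then automatic: $\nu_n/2 \in (0,\sigma_{n-1}/4) \subset (0,\sigma_{n-1}/2)$, and \eqref{eq.balls} ensures $d(x,x_{n-1}) < \delta_{n-1}$. For the moreover part, having placed $(x,e) \in D_n$, the near-maximality \eqref{eq.defxnen} yields $w_{p_n}(x,e) \le w_{p_n}(x_n,e_n) + \e_n$, which together with $w_{p_{n+1}}(x,e) \ge w_{p_{n+1}}(x_n,e_n) = w_{p_n}(x_n,e_n) \ge 2/3$ forces $p_{n+1}(e)/p_n(e) \le 1 + \tfrac{3}{2}\e_n$. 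Squaring and using $p_{n+1}(e)^2 - p_n(e)^2 = t_n^2\|e - \R e_n\|^2$ with $p_n(e) \le 2$, one gets $t_n^2\|e - \R e_n\|^2 \le 16\e_n$, and then $\|e - \R e_n\| \le \sigma_n/(16\sqrt{2})$ by the choice of $\e_n$. Because $\|e\| = \|e_n\| = 1$ and \eqref{eq.e0dn} gives $e_0^*(e), e_0^*(e_n) \ge 1/2$, the optimal scalar multiple of $e_n$ approximating $e$ must be positive and within $\|e - \R e_n\|$ of $1$, whence $\|e - e_n\| \le 2\|e - \R e_n\| \le \sigma_n/8$.

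The main obstacle is the delicate inequality in the second paragraph, balancing the direction-change cost $2\|e_n - e_{n-1}\|$ and the residual $2\beta(A)$ against the available slack $\nu_n/2$; this is precisely where the rapid decay built into \eqref{eq.param} and the inductive direction estimate are crucial.
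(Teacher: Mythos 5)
Your weight comparison, the inclusion $D_{n+1}\subseteq D_n$ via \eqref{eq.balls}, and the ``moreover'' argument (distance to $\R e_n$ via \eqref{eq.defxnen}, then $e_0^*$ to fix the sign of the scalar) all match the paper and are fine. The gap is in the central step: your single, $t$-uniform decomposition reduces the increment bound to $\sigma_n + 2\|e_n-e_{n-1}\| + 2\beta(A) \leq \nu_n/2$, and this inequality is false in general. The only slack available after applying the $G_{p_n}(x_{n-1},e_{n-1},\sigma_{n-1}-\nu_n)$ bound to the last piece is $\nu_n/2$, but $\nu_n$ is merely \emph{some} admissible value in $(0,\sigma_{n-1}/2)$ determined a posteriori by the pair $(x_n,e_n)$; it has no lower bound, whereas $\sigma_n>0$ is a fixed constant chosen earlier in \eqref{eq.param}. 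Even ignoring $\sigma_n$, the inductive estimate only gives $2\|e_n-e_{n-1}\|\leq\sigma_{n-1}/4$, which already meets or exceeds $\nu_n/2<\sigma_{n-1}/4$. (There is also a circularity: your bound $A\leq\e_n$, needed to make $\beta(A)$ negligible, uses \eqref{eq.defxnen} at level $n$, i.e.\ $(x,e)\in D_n$ — which is exactly what the inclusion you are proving is meant to deliver.)

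The paper escapes this by splitting on $|t|$, and this is precisely what the parameters $\Delta_n$, $\delta_n$ and the inequalities \eqref{wot1}, \eqref{wot2} are for — your proof never uses them, which is the telltale sign. For $|t|<4\Delta_n/\nu_n$ one does \emph{not} invoke the $G_{p_n}$ bound on $\Delta(x_n,x_{n-1},e_{n-1},t)$ at all: instead \eqref{wot1}--\eqref{wot2} replace that piece by $|f'(\pi x_n,e_n)-f'(\pi x_{n-1},e_{n-1})|\,|t|$ up to $\sigma_{n-1}|t|/16$, and \eqref{eq.fpin} converts the derivative difference into $2C+4\|e_n-e_{n-1}\|$; the resulting constant is $\tfrac34\sigma_{n-1}$, comfortably below $\sigma_{n-1}-\nu_n/2$ with no need to absorb anything into $\nu_n$. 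For $|t|\geq 4\Delta_n/\nu_n$ one does not use the $G_{p_{n+1}}$ membership of $(x,e)$ either: the choice of $\delta_n$ gives $\|\pi x-\pi x_n\|\leq\Delta_n\leq\nu_n|t|/4$, so swapping $\pi x$ for $\pi x_n$ costs only $\nu_n|t|/2$, which is exactly the half of $\nu_n$ that the target leaves available. Your argument needs to be rebuilt around this case analysis; as written, the main inequality cannot be closed.
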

\begin{proof}
Notice first that since $\sigma_0=16$,
the ``moreover'' statement is satisfied for $n=0$.

We shall now show that assuming the latter statement is satisfied for $n-1$,
where $n\ge1$, the full conclusion of the present lemma
holds for $n$.

Assume therefore $n\ge1$,
the ``moreover'' part is satisfied for $n-1$ and
$(x,e) \in D_{n+1}$. Since $(x_{n},e_{n}) \in D_{n}$,
we get
\begin{equation}\label{enen-1}
\|e_{n}-e_{n-1}\|
\leq
\frac{\sigma_{n-1}}{8}.
\end{equation}

Since
$(x,e)\in G_{p_{n+1}}(x_n,e_n,\sigma_n-\nu)$ for some $\nu>0$,
we get $w_{n+1}(x,e) \geq w_{n+1}(x_n,e_n)$; thus
using \eqref{eq.co5}, we obtain
the first defining property of $G_{p_n}(x_{n-1},e_{n-1},*)$:
\begin{equation*}
w_n(x,e) \geq w_{n+1}(x,e) \geq w_{n+1}(x_n,e_n) = w_n(x_n,e_n) \geq w_{n}(x_{n-1},e_{n-1}).
\end{equation*}
In order to show $(x,e)\in G_{p_n}(x_{n-1},e_{n-1},\sigma_{n-1}-\nu_n/2)$,
we need to prove the second defining property of the latter set.
We prove the inequality separately for $|t|<4\Delta_n/\nu_n$ and
$|t|\ge4\Delta_n/\nu_n$.

If $|t|<4\Delta_n/\nu_n$, using first~\eqref{wot1},~\eqref{wot2} and then $\mathrm{Lip}(f) \leq 1$,
\begin{align*}
|(f(\pi x+te_{n-1})-&f(\pi x))-(f(\pi x_{n-1}+te_{n-1})-f(\pi x_{n-1}))|\\
\leq |(f(\pi x&+te_{n-1})-f(\pi x))-(f(\pi x_n+te_n)-f(\pi x_n))|\\
&+ |f'(\pi x_n,e_n)-f'(\pi x_{n-1},e_{n-1})|\cdot |t| + \frac{1}{16}\sigma_{n-1}|t|\\
\leq |(f(\pi x&+te_n)-f(\pi x))-(f(\pi x_n+te_n)-f(\pi x_n))|  +  \| e_{n}-e_{n-1}\|\cdot |t|\\
&+ |f'(\pi x_n,e_n)-f'(\pi x_{n-1},e_{n-1})|\cdot |t| + \frac{1}{16}\sigma_{n-1}|t|.
\end{align*}
We may now apply \eqref{enen-1}, $(x,e)\in G_{p_{n+1}}(x_n,e_n,\sigma_n-\nu)$ and
\eqref{eq.fpin} to deduce that the latter is bounded from above by
\begin{multline}
|t|\Bigl(\sigma_n-\nu+\Omega (w_{n+1}(x,e) - w_{n+1}(x_n,e_n)) + \frac{3}{16}\sigma_{n-1} +\Bigr.\\
\Bigl.2(w_n(x_n,e_n)-w_{n}(x_{n-1},e_{n-1}) + 4\|e_n-e_{n-1}\|\Bigr).
\end{multline}
Recall that $\Omega$ is an increasing function and
$$
w_{n+1}(x,e) - w_{n+1}(x_n,e_n)
=
w_{n+1}(x,e) - w_{n}(x_n,e_n)
\le
w_{n}(x,e) - w_{n}(x_n,e_n);
$$
then using again \eqref{enen-1},
 $\sigma_n \in (0,\sigma_{n-1}/16)$, $\nu_n \in (0,\sigma_{n-1}/2)$
so that $\frac{3}{4}\sigma_{n-1}\le\sigma_{n-1}-\nu_n/2$,
and Lemma~\ref{omegalemma}(3),
we have
\begin{align*}
&|(f(\pi x+te_{n-1})-f(\pi x))-(f(\pi x_{n-1}+te_{n-1})-f(\pi x_{n-1}))|\\
\leq
&\left(\frac{3}{4}\sigma_{n-1}+\Omega (w_{n}(x,e) - w_{n}(x_n,e_n)) + 2(w_n(x_n,e_n)-w_{n}(x_{n-1},e_{n-1})) \right)|t|\\
\leq
&\left(\sigma_{n-1}-\nu_n/2 +\Omega (w_{n}(x,e) - w_{n}(x_{n-1},e_{n-1}))\right)|t|.
\end{align*}

Now we consider the case $|t| \geq 4\Delta_n/\nu_n$. As
$(x,e)\in D_{n+1}$, we have $d(x,x_n) < \delta_n$. Therefore,
from  the definition of $\delta_n$, we have
$\|\pi x-\pi x_n\| \le \Delta_n \le \nu_n|t|/4$. Thus,
replacing $f(\pi x+te_{n-1})$ with $f(\pi x_n+te_{n-1})$
and $f(\pi x)$ with $f(\pi x_n)$, we get
\begin{multline*}
|(f(\pi x+te_{n-1})-f(\pi x))-(f(\pi x_{n-1}+te_{n-1})-f(\pi x_{n-1})|\\
 \leq \nu_n|t|/2 + |(f(\pi x_n+te_{n-1})-f(\pi x_n))-(f(\pi x_{n-1}+te_{n-1})-f(\pi x_{n-1})|.
\end{multline*}
Now using
$(x_n,e_n) \in G_{p_n}(x_{n-1},e_{n-1},\sigma_{n-1}-\nu_{n})$,
we estimate the second term by
$$(\sigma_{n-1}-\nu_n + \Omega(w_n(x_n,e_n)-w_n(x_{n-1},e_{n-1})))|t|.$$
Adding  $\nu_n|t|/2$ to this and noting $\Omega$ is an increasing
function, we estimate this from above by
$$
\left(\sigma_{n-1}-\nu_n/2 + \Omega(w_n(x,e)-w_n(x_{n-1},e_{n-1}))\right)|t|.
$$
This finishes the proof of $(x,e)\in G_{p_n}(x_{n-1},e_{n-1},\sigma_{n-1}-\eta_n/2)$.

Further, for $(x,e) \in D_{n+1}$ we have $\|e\| = 1$ and
$d(x,x_{n-1}) < \delta_{n-1}$, using $d(x,x_n) < \delta_n$ and
\eqref{eq.balls}. Therefore, $(x,e) \in D_{n}$; hence
$D_{n+1}\subseteq D_n$.

Finally to prove $\|e-e_n\|\le\sigma_n/8$,
note that \eqref{eq.co5} together with the definition of
$(x_n,e_n)$ implies
\begin{equation}
\label{wotname}
w_n(x_n,e_n) \leq
w_{n+1}(x_{n},e_{n}) =\frac{p_n(e)}{p_{n+1}(e)} w_n(x,e)
 \leq \frac{p_n(e)}{p_{n+1}(e)} (w_n(x_n,e_n)+\e_n).
\end{equation}

Writing $p_{n+1}(e)=\sqrt{p_n^2(e)+t_n^2d^2}$, where
$d = \|e-\R e_n\| \leq 1$ and using  $t_n < t_0 < 1/2$ we deduce
$$
\frac{p_n(e)}{p_{n+1}(e)} = {1}/{\sqrt{1+t_n^2 d^2/p_n(e)^2}} \leq 1-\frac{t_n^2 d^2}{4p_n(e)^2}
$$
as $1/\sqrt{1+x} \leq 1-x/4$ for $0 \leq x \leq 1$.
Substituting this inequality into~\eqref{wotname}
and using \eqref{eq.param} we obtain
$$
\frac{t_n^2 d^2}{4p_n(e)^2}w_n(x_n,e_n) \leq \e_n \left(1-\frac{t_n^2 d^2}{4p_n(e)^2}\right)\leq \e_n
<t_n^2\sigma_n^2/2^{13}.$$
On the other hand, \eqref{eq.co5} and $g'(\pi x_0,e_0) \geq 0$
imply
$$
w_n(x_n,e_n)\ge w_0(x_0,e_0)=f'(\pi x_0,e_0)
= g'(\pi x_0,e_0)+\frac{2}{3} > \frac{1}{2},
$$
so using $p_n(e)\le2$ we conclude $d\le\sigma_n/2^4$.
This means there is a $t \in \R$ such that
\begin{equation}
\label{runoutof}
\|e-t e_n\| \leq \frac{\sigma_n}{16}.
\end{equation}
It follows $|e_0^*(e-t e_n)| \leq \sigma_n/16 \leq 1/2$. However,
by \eqref{eq.e0dn},
$e_0^*(e), e_0^*(e_n)\geq 1/2$, hence $t \geq 0$.
Then from~\eqref{runoutof} and $\|e_n\| = \|e\| = 1$ we get that
$|1-t| \leq \frac{\sigma_n}{16}$
and so
$$
\|e-e_n\| \leq \frac{\sigma_n}{8}.\qedhere
$$
\end{proof}

We note here that Lemma~\ref{lemleq} implies that $\|e_m-e_n\|\le\sigma_n/8$
whenever $m\ge n+1$. Thus $(e_n)$ is a Cauchy sequence, so it converges.
Let $\lime=\lim e_n$. As $\|e_n\|=1$ for each $n\ge1$, we have $\|\lime\|=1$.

\subsection{Existence of directional derivative
$f'(\pi\limx,\lime)$}\label{sec.diff}
From~\ref{convofxn} we have $d(x_k,x_n)<\delta_n$ for all $k\ge n$. We also know that for $k \geq n$, $(x_k,e_k) \in D_{k+1} \subseteq D_{n+1}$ using Lemma~\ref{lemleq}, so that $\|e_k-e_n\| \leq \sigma_{n}/8$, again by Lemma~\ref{lemleq}. Hence the sequences $x_{n}$ and $e_{n}$ converge to $\limx$
and $\lime$ respectively, where
\begin{equation}\label{eq.distinf}
d(\limx,x_n)<\delta_n
\textrm{ and }
\|\lime-e_n\|\le\sigma_n/8
\end{equation}
are satisfied for every $n\ge1$, the strictness of the first inequality following from \eqref{eq.balls}.
It is also
clear that the sequence of norms $p_n$ converges to
$$
p_\infty(y)= \sqrt{\|y\|^2 + \sum_{m=1}^{\infty} t_m^2 \|y-\R e_m\|^2}
$$
as this formula defines a norm and
$$
p_n^2(y) \leq p_{\infty}^2(y)  \leq
p_n^2(y)+2t_n^2\|y\|^2\le (1+2t_n^2)p_n^2(y)
 \leq (1+t_n^2)^2 p_n^2(y)
$$
implies for all $y\in Y$
\begin{equation}\label{eq.pminf}
p_n(y)\le p_\infty(y)\le(1+t_n^2)p_n(y).
\end{equation}
This implies for every $(x,e)\in D$
\begin{equation}\label{eq.wnwinf}
|w_n(x,e)-w_{\infty}(x,e)|
= |f'(x,e)| \cdot \frac{|p_{\infty}(e)-p_n(e)|}{p_n(e)p_{\infty}(e)}
\leq \|e\| \frac{t_n^2 \cdot p_n(e)}{p_n(e)p_{\infty}(e)}
\leq t_n^2
\end{equation}
using $\mathrm{Lip}(f) \leq 1$ and $p_{\infty}(e) \geq \|e\|$.

We will now show that the directional derivative
$f'(\pi\limx,\lime)$ exists and
\begin{equation}\label{eq.wnincr}
w_m(x_m,e_m) \nearrow w_{\infty}(\limx,\lime),
\end{equation}
where $w_\infty=w_{p_\infty}$.

Indeed, for every $n\ge1$, the inequality $p_n(y)\ge\|y\|$ and
\eqref{eq.co5} imply
$$
0<w_0(x_0,e_0)\le w_n(x_n,e_n)\le\mathrm{Lip}(f)\le1.
$$
Thus there is
$L\in(0,1]$ such that $w_n(x_n,e_n)\nearrow L$. From \eqref{eq.pminf}
we conclude $w_\infty(x_n,e_n)\to L$ and
$w_{n+1}(x_{n},e_{n}) \to L$.
Note then
$$
w_{m}(x_{n},e_{n})-w_{m}(x_{m-1},e_{m-1})
\xrightarrow[n\to\infty]{}\frac{p_{\infty}(\lime)}{p_m(\lime)} L - w_m(x_{m-1},e_{m-1})
=:s_{m}
\xrightarrow[m\to\infty]{} 0.
$$
Assuming $n\ge m$ we get $(x_n,e_n)\in D_n\subseteq D_{m+1}$.
The first condition \eqref{eq.defg1} of
\begin{equation}\label{eq.uniformeps}
(x_n,e_n)\in G_{p_m}(x_{m-1},e_{m-1},\sigma_{m-1}-\nu_m/2)
\end{equation}
says
$w_m(x_n,e_n)\ge w_m(x_{m-1},e_{m-1})$, thus
$s_{m} \geq 0$ for each $m$.
Taking $n \rightarrow \infty$ in the second inequality \eqref{eq.defg2} from
the definition of
\eqref{eq.uniformeps}, we obtain
\begin{equation}\label{eqfm}
|(f(\pi\limx+te_{m-1})-f(\pi\limx))-(f(\pi x_{m-1}+te_{m-1})-f(\pi x_{m-1}))|
\leq r_{m}|t|
\end{equation}
for any $t \in \R$,
where
\begin{equation*}
r_{m}:= \sigma_{m-1} - \nu_{m}/2 + \Omega(s_m) \to 0
\end{equation*}
by Lemma~\ref{omegalemma}(2).
Using $\|\lime-e_{m-1}\| \leq \sigma_{m-1}$ and $\mathrm{Lip}(f) \leq 1$:
\begin{equation}\label{eqfmmod}
|(f(\pi\tilde  x+t\lime)-f(\pi\tilde  x))-(f(\pi x_{m-1}+te_{m-1})-f(\pi x_{m-1}))| \leq (r_m+\sigma_{m-1})|t|.
\end{equation}
Let $\e > 0$. Note that as
$$f'(\pi x_{m-1},e_{m-1}) = p_{m-1}(e_{m-1})w_{m-1}(x_{m-1},e_{m-1}) \to p_{\infty}(\lime)L$$
we may pick $m$ such that
\begin{equation}\label{mlarge}
r_m+\sigma_{m-1} \leq \e/3
\quad\textrm{ and }\quad
|f'(\pi x_{m-1},e_{m-1})-p_{\infty}(\lime)L| \leq \e/3
\end{equation}
and then $\delta > 0$ with
\begin{equation}\label{dirderivatm}
|f(\pi x_{m-1}+te_{m-1})-f(\pi x_{m-1})-f'(\pi x_{m-1},e_{m-1})t| \leq \e |t|/3
\end{equation}
for all $t$ with $|t| \leq \delta$. Combining \eqref{eqfmmod},
\eqref{mlarge} and \eqref{dirderivatm} we obtain
\begin{equation*}
|f(\pi\tilde  x+t\lime)-f(\pi\tilde  x)-p_{\infty}(\lime)Lt| \leq \e |t|
\end{equation*}
for $|t| \leq \delta$. Hence the directional derivative
$f'(\pi\tilde  x,\lime)$ exists and equals
$p_{\infty}(\lime)L$ and $w_\infty(\limx,\lime)=L$.

The last equality and the definition of $s_m$ implies
$w_m(\limx,\lime)-w_m(x_{m-1},e_{m-1})=s_m\ge0$, so together with
\eqref{eqfm} we get
\begin{equation}\label{eq.xinfg}
(\limx,\lime)\in G_{p_m}(x_{m-1},e_{m-1},\sigma_{m-1}-\nu_m/2)
\end{equation}
and so $(\limx,\lime)\in D_m$ for all $m\ge1$.

\subsection{Maximality of the weight function at $(\limx,\lime)$}
\label{localmaximality}
We now verify that the value of the weight
function $w_\infty(\limx,\lime)$ is almost maximal in the
following sense:
For every $\e > 0$ there exists $\delta > 0$ such that whenever
$(x',e') \in G_{p_{\infty}}(\limx,\lime,0)$
with $d(x',\limx) \leq \delta$ then we have
\begin{equation}\label{eq.maxw}
w_{\infty}(x',e') < w_{\infty}(\limx,\lime) + \e.
\end{equation}

Assume $\e>0$ is fixed, choose then
$n \geq 1$ with $\e_n + 2t_n^2 < \e$ and pick $\Delta > 0$ such that for $|t| < 8\Delta/\nu_n$, the following two inequalities are satisfied:
\begin{align}
\label{wot12}
|f(\pi \limx+t\lime)-f(\pi \limx)-f'(\pi \limx,\lime)t| \leq &\frac{1}{16}\sigma_{n-1}|t|;\\
\label{wot22}
|f(\pi x_{n-1}+te_{n-1})-f(\pi x_{n-1})-f'(\pi x_{n-1},e_{n-1})t| \leq &\frac{1}{16}\sigma_{n-1}|t|.
\end{align}

Using \eqref{eq.distinf}  and the continuity of $\pi$ we can find
\begin{equation}\label{delta2}
\delta \in (0,\delta_{n-1}-d(\limx,x_{n-1}))
\end{equation}
such that whenever $d(x',\limx) \leq \delta$,
\begin{equation}
\label{hahamissed}
\|\pi x'-\pi \limx \| \leq \Delta.
\end{equation}

We now suppose, for a contradiction, that we may find
$(x',e') \in G_{p_{\infty}}(\limx,\lime,0)$
such that $d(x',\limx) \leq \delta$ and, contrary to \eqref{eq.maxw},
we have
\begin{equation}\label{choicex'}
w_{\infty}(x',e') \geq w_{\infty}(\limx,\lime) + \e.
\end{equation}
As $w_{\infty}(x',e')$ is invariant if we scale $e'$ by a positive factor, as is the membership relation $(x',e') \in G_{p_{\infty}}(\limx,\lime,0)$, we may assume that $\|e'\| = 1$.

First we shall show that $(x',e') \in D_n$.
Since \eqref{delta2} and $d(x',\limx) \leq \delta$ imply that we have
$d(x',x_{n-1})< \delta_{n-1}$,
by definition of $D_n$ to prove $(x',e') \in
D_{n}$ it is enough to show that
\begin{equation}\label{eqlast2}
(x',e') \in G_{p_n}(x_{n-1},e_{n-1},\sigma_{n-1}-\nu_n/4).
\end{equation}

Note that
from \eqref{eq.wnwinf} we have
\begin{equation}\label{eq.wnwx'}
w_n(x',e') -w_n(\limx,\lime)
\ge
w_\infty(x',e') -w_\infty(\limx,\lime)-2t_n^2
\ge
\e-2t_n^2\ge \e_n>0;
\end{equation}
therefore
$$
w_n(x',e') >w_n(\limx,\lime)
\ge w_n(x_{n-1},e_{n-1})
$$
as $s_n=w_n(\limx,\lime)-w_n(x_{n-1},e_{n-1})\ge0$: see the end of Section~\ref{sec.diff}.

We now check the second condition of \eqref{eqlast2}.
Assume $|t|<8\Delta/\nu_n$; using~\eqref{wot12},~\eqref{wot22} and then $\mathrm{Lip}(f) \leq 1$,
\begin{align*}
|(f(\pi x'&+te_{n-1})-f(\pi x'))-(f(\pi x_{n-1}+te_{n-1})-f(\pi x_{n-1}))|\\
\leq & \text{ }|(f(\pi x'+te_{n-1})-f(\pi x'))-(f(\pi \limx+t\lime)-f(\pi \limx))|\\ &+ |f'(\pi \limx,\lime)-f'(\pi x_{n-1},e_{n-1})| \cdot |t| + \frac{1}{8}\sigma_{n-1}|t|\\
\leq & \text{ }|(f(\pi x'+t\lime)-f(\pi x'))-(f(\pi \limx+t\lime)-f(\pi \limx))|  + \|\lime-e_{n-1}\|\cdot |t|\\&+ |f'(\pi \limx,\lime)-f'(\pi x_{n-1},e_{n-1})|\cdot |t| + \frac{1}{8}\sigma_{n-1}|t|.
\end{align*}
In this sum of four terms, we use $(x',e') \in G_{p_{\infty}}(\limx,\lime,0)$ to
bound the first term from above by
$\Omega(w_{\infty}(x',e')-w_{\infty}(\limx,\lime))\cdot|t|$,
and \eqref{eq.fpin} to
bound
the third term by
$$\left(2(w_n(\limx,\lime)-w_n(x_{n-1},e_{n-1})) + 4\|\lime-e_{n-1}\|\right)|t|.$$
Using
in addition inequality
$\|\lime-e_{n-1}\| \leq \sigma_{n-1}/8$ from
\eqref{eq.distinf}, we get

\begin{multline}\label{eq.estx'}
|(f(\pi x'+te_{n-1})-f(\pi x'))-(f(\pi x_{n-1}+te_{n-1})-f(\pi x_{n-1})|\\
\le
|t|\left(
\Omega(w_{\infty}(x',e')-w_{\infty}(\limx,\lime))
+2(w_n(\limx,\lime)-w_n(x_{n-1},e_{n-1}))
+\frac{3}{4}\sigma_{n-1}\right)
\end{multline}
We now use the fact that $\Omega$ is an increasing function
and $w_\infty(x',e')\le w_n(x',e')$, which follows from
$p_n\le p_\infty$, and then use
\eqref{eq.wnwinf} to
estimate
$2w_n(\limx,\lime)$ from above
by the expression
$2w_\infty(\limx,\lime)+2t_n^2$.
Then the expression on the right hand side of \eqref{eq.estx'} is
less than or equal to
$$
|t|\Bigl(
\Omega(w_n(x',e')-w_{\infty}(\limx,\lime))
+2(w_\infty(\limx,\lime)-w_n(x_{n-1},e_{n-1}))
+\frac{3}{4}\sigma_{n-1}+2t_n^2\Bigr).
$$
As $t_n^2<\sigma_{n-1}/16$ and $\Omega$ satisfies property (3) in
Lemma~\ref{omegalemma}, we finally get
\begin{multline*}
|(f(\pi x'+te_{n-1})-f(\pi x'))-(f(\pi x_{n-1}+te_{n-1})-f(\pi x_{n-1})|\\
\le
\left(
\Omega (w_n(x',e')-w_n (x_{n-1},e_{n-1}))
+\frac{7}{8}\sigma_{n-1}\right)|t|\\
\le
\Bigl(\sigma_{n-1}-\nu_n/4 +\Omega (w_{n}(x',e') - w_{n}(x_{n-1},e_{n-1}))\Bigr)|t|.
\end{multline*}
as $\nu_n<\sigma_{n-1}/2$.
Thus we proved the second condition of \eqref{eqlast2} for
$|t|<8\Delta/\nu_n$.

Now we consider the case $|t| \geq 8\Delta/\nu_n$. From $d(x',\limx) \leq \delta$ and~\eqref{hahamissed} we have
$$\|\pi x'-\pi \limx\| \leq \Delta \leq \nu_n|t|/8$$
so we get, using $(\limx,\lime) \in G_{p_n}(x_{n-1},e_{n-1},\sigma_{n-1}-\nu_{n}/2)$ from
\eqref{eq.xinfg},
\begin{multline*}
|(f(\pi x'+te_{n-1})-f(\pi x'))-(f(\pi x_{n-1}+te_{n-1})-f(\pi x_{n-1}))|\\
 \leq|(f(\pi \limx+te_{n-1})-f(\pi \limx))-(f(\pi x_{n-1}+te_{n-1})-f(\pi x_{n-1}))| +2\|\pi x'-\pi \limx\|\\
\leq \left(\sigma_{n-1}-\nu_n/2 + \Omega(w_n(\limx,\lime)-w_n(x_{n-1},e_{n-1}))\right)|t| + \nu_{n}|t|/4\\
 \leq \left(\sigma_{n-1}-\nu_n/4 + \Omega(w_n(x',e')-w_n(x_{n-1},e_{n-1}))\right)|t|,
\end{multline*}
where, in the final line, we have used $w_n(x',e') \geq w_n(\limx,\lime)$ from
\eqref{eq.wnwx'}.

This finishes the proof of $(x',e')\in D_n$ for every $n\ge1$.
Recall the property of the pair $(x_n,e_n)\in D_n$ is such that
$$
w_n(x,e)\le w_n(x_n,e_n)+\e_n
$$
for all $(x,e)\in D_n$.
Notice that by \eqref{eq.wnincr}
the right hand side of this inequality is less than or equal
to  $w_\infty(\limx,\lime)+\e_n$, thus together with
\eqref{choicex'} and \eqref{eq.wnwinf} we finally get
$$
w_\infty(\limx,\lime)+\e
\le
w_\infty(x',e')
\le
w_n(x',e')+t_n^2
\le
w_\infty(\limx,\lime)+\e_n+t_n^2.
$$
This is a contradiction as $\e>\e_n+t_n^2$.
This means that the assumption \eqref{choicex'} is false,
completing the proof of the statement of the present section.

\subsection{Proof of Theorem~\ref{th.incr}}
We first quote \cite[Lemma 4.3]{P} for determining the \frechet{} differentiability of the norm $p_{\infty}$:
\begin{lemma*}
If the norm of a Banach space $Y$ is \frechet{} differentiable on $Y \setminus \{0\}$,  $e_m \in Y$ and $t_m \geq 0$ with $\sum t_m^2 < \infty$, then the function $p \colon Y \to \R$ defined by the formula
$$p(y) := \sqrt{\|y\|^2 + \sum_{m=1}^{\infty} t_m^2 \|y-\R e_m\|^2}$$
is an equivalent norm on $Y$ that is \frechet{} differentiable on $Y \setminus \{0\}$.
\end{lemma*}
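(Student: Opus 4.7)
The plan has three stages: (i) verify $p$ is an equivalent norm; (ii) prove each $q_m(y) := \|y - \R e_m\|^2$ is \frechet{} differentiable at every point of $Y$; (iii) sum these to obtain \frechet{} differentiability of $p^2$, and conclude for $p$ via the chain rule.

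For (i), each map $y \mapsto \|y - \R e_m\|$ is a continuous seminorm, being the infimum over $t \in \R$ of the convex coercive function $t \mapsto \|y - te_m\|$. Minkowski's inequality in $\ell^2$ applied to the vector $(\|y\|, t_1\|y - \R e_1\|, t_2\|y - \R e_2\|, \ldots)$ then yields subadditivity of $p$; positive homogeneity is obvious, and the two-sided bound $\|y\| \le p(y) \le \sqrt{1 + \sum_m t_m^2}\,\|y\|$ gives equivalence to $\|\cdot\|$.

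The heart of the argument is (ii). Fix $y_0 \in Y$ and $m \ge 1$. If $y_0 \in \R e_m$ then $q_m(y_0 + h) \le \|h\|^2$, so $q_m$ is \frechet{} differentiable at $y_0$ with derivative $0$. Otherwise, the convex function $t \mapsto \|y_0 - te_m\|$ attains its minimum at some $t_*$ (by coerciveness when $e_m \neq 0$, trivially when $e_m = 0$), so the point $z := y_0 - t_* e_m$ is nonzero; set $\varphi := \|\cdot\|'(z) \in Y^*$. The first-order optimality condition forces $\varphi(e_m) = 0$. The subgradient inequality for the convex function $\|\cdot\|$ at $z$ then gives, for every $t \in \R$,
$$\|y_0 + h - te_m\| \ge \|z\| + \varphi\bigl((y_0 + h - te_m) - z\bigr) = \|y_0 - \R e_m\| + \varphi(h),$$
using $\varphi(e_m) = 0$; taking the infimum over $t$ yields $\|y_0 + h - \R e_m\| \ge \|y_0 - \R e_m\| + \varphi(h)$. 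The matching upper bound $\|y_0 + h - \R e_m\| \le \|z + h\| = \|z\| + \varphi(h) + o(\|h\|)$ comes directly from \frechet{} differentiability of $\|\cdot\|$ at $z$. Squaring gives $q_m$ \frechet{} differentiable at $y_0$ with derivative $\Lambda_m := 2\|y_0 - \R e_m\|\,\varphi$, of norm at most $2\|y_0\|$.

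For (iii), write $p^2 = \|\cdot\|^2 + \sum_{m \le N} t_m^2 q_m + R_N$ with tail $R_N := \sum_{m > N} t_m^2 q_m$. From $|q_m(y_0 + h) - q_m(y_0)| \le (2\|y_0\| + \|h\|)\|h\|$ we see that $R_N$ is Lipschitz on a bounded neighbourhood of $y_0$ with constant tending to $0$ as $N \to \infty$, while the head is \frechet{} differentiable at $y_0$ as a finite sum, with \frechet{} derivative $L_N$. The sequence $(L_N)$ is Cauchy in $Y^*$ by $\|\Lambda_m\| \le 2\|y_0\|$ and $\sum t_m^2 < \infty$, so $L_N \to L$ in $Y^*$; a standard triangle estimate then gives \frechet{} differentiability of $p^2$ at $y_0$ with derivative $L$, and the chain rule yields \frechet{} differentiability of $p = \sqrt{p^2}$ at every $y_0 \ne 0$. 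The main obstacle is step (ii): the quotient norm on $Y/\R e_m$ need not be \frechet{} smooth even when $\|\cdot\|$ is, so one cannot just invoke \frechet{} smoothness of a quotient seminorm. The trick that sidesteps this is the vanishing $\varphi(e_m) = 0$ at the projection point $z$, which upgrades the one-sided subgradient inequality into a two-sided asymptotic: perturbations along $e_m$ are invisible to $\varphi$, matching precisely the direction along which the infimum defining $\|y - \R e_m\|$ is taken.
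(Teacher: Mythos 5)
Your proof is correct. Note that the paper does not prove this statement at all --- it is quoted verbatim from Preiss \cite[Lemma 4.3]{P} --- so you have supplied a complete argument where the paper only gives a citation; your route (attainment of the one-dimensional infimum at some $t_*$, the vanishing $\varphi(e_m)=0$ from first-order optimality, the subgradient inequality for the lower bound and \frechet{} smoothness at $z$ for the upper bound, then summation with a uniformly small Lipschitz tail) is the standard one and all the steps check out, including the two points a reader might pause on: the infimum over $t$ is indeed attained because $t\mapsto\|y_0-te_m\|$ is a coercive convex function of one real variable, and the possible non-uniqueness of $t_*$ is harmless since the argument establishes differentiability of $q_m$ for any choice of minimiser.
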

We verify the conclusions of Theorem~\ref{th.incr} for Lipschitz
function $f$ defined in \eqref{eq.deff-g}
and the norm
$\|\cdot\|'=p_\infty$.

The items (1) and (2) of Theorem~\ref{th.incr} follow from
\eqref{eq.deff-g} and \eqref{eq.normineq} as $p_n\to p_\infty$.
The ``moreover'' statement in Theorem~\ref{th.incr}
is a direct consequence of the Lemma
quoted above and the definition of $p_\infty$ in Section~\ref{sec.diff}.

For part (3) of Theorem~\ref{th.incr} we define $\tilde x = \limx$ and
$\tilde e = \lime/\|\lime\|'$.
Then we have $(\tilde x,\tilde e)\in D$
and $\|\tilde e\|' = 1$.
Further we have
$$
f'(\pi\tilde x,\tilde e) = w_{\infty}(\limx,\lime) \geq w_{0}(x_0,e_0)
= f'(\pi x_0,e_0)
$$
by Definition~\ref{defnweight} and \eqref{eq.wnincr}.
Now given any $\e > 0$ we choose $\delta > 0$ as in
Section~\ref{localmaximality} and then define the open neighbourhood  of
$\tilde x$ in $M$ by $N_{\e}= \ball{\tilde x}{\delta}$.

Subsequently if~\eqref{eqincr} is satisfied then as $2\Theta \leq \Omega$, from Lemma~\ref{omegalemma}(1), and further
$\|\lime\|' \leq 2 \|\lime\| = 2$,
we have
that $f'(\pi\tilde x,\tilde e)\le f'(\pi x',e')$ implies
$$
(x',e') \in G_{p_{\infty}}(\limx,\lime,0).
$$
Then we have just showed in Section~\ref{localmaximality}, as $x' \in \ball{x}{\delta}$,
$$w_{\infty}(x',e') < w_{\infty}(\limx,\lime) +\e$$
by \eqref{eq.maxw},
and so replacing $w_{\infty}(x',e')$ by $f'(\pi x',e')$ we conclude
\begin{equation*}
f'(\pi x',e') < f'(\pi\tilde  x,\tilde e)+\e.
\end{equation*}
\qed
\section{Set theory}
\label{Set_theory}

In this section we shall prove Theorem~\ref{th.passtoclosed}.
We recall its hypotheses:
$(Y,d)$ is a metric space and $(K_r)_{r \in R}$ is a collection of non-empty compact subsets of $Y$ indexed by $(R,\gamma)$, a non-empty metric space, such that
\begin{equation}
\label{eq.metricproperty}
\gamma(r,s) \leq \mathcal{H}(K_r,K_s)
\end{equation}
for every $r,s \in R$, where $\mathcal{H}$ denotes the Hausdorff distance.

Further $\G$ is a $G_{\delta}$ subset of $Y$ containing every element of the family $(K_r)_{r \in R'}$ where $R' \subseteq R$ is $\gamma$-dense
and $r_0\in R'$.
We further recall that $\rho,\e_0 >0$ are such that for every $\e \in (0,\e_0)$ there exists $R(\e) \subseteq R$ such that \eqref{eq.repsilon} holds:
\begin{equation}
\notag
\begin{tabular}{l}
\textbullet\quad for every $s \in R$ there exists $t \in R(\e)$ with $\gamma(t,s) < \e$,\\
\textbullet \quad
for every subset $S$ of $Y$ of diameter at most $\rho \e$
the set\\ \quad\ $\{r \in R(\e): S\cap K_r\ne\emptyset\}$ is finite.
\end{tabular}
\end{equation}

We may assume $\rho \in (0,1)$ is fixed. Write $\G = \bigcap_{n=1}^{\infty}\G_n$ where $(\G_n)$
is a nested sequence of
open subsets of $Y$, $\G_{n+1} \subseteq \G_{n}$ for each $n \geq 1$.

We first observe that due to the fact that $\G$ contains
a $\gamma$-dense collection of compacts $K_r$,
we may replace the families $R(\e)$
of compacts with families $R'(\e) \subseteq R'$, so that
$K_r \subseteq \G$ for every $r \in R'(\e)$, and properties listed in
the following lemma are satisfied.
\begin{lemma}
For every $\e \in (0,\e_0)$ we can find $R'(\e) \subseteq R$ such that $K_r \subseteq \G$ for all $r \in R'(\e)$ and
\begin{itemize}
\item
for every $r\in R$ there exists $t \in R'(\e)$ with $\gamma(t,r) < \e$,
\item
for every subset $B$ of $Y$ of
diameter at most $\frac{4}{5}\rho \e$
the set
\begin{equation}
\label{firstfin}
F^B(\e) := \{t \in R'(\e) \text{ with }K_t \cap B \neq \emptyset \}
\end{equation}
is finite.
\end{itemize}
\end{lemma}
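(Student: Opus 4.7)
The plan is to build $R'(\e)$ by slightly shrinking the parameter and then replacing each index by a nearby member of the $\gamma$-dense subfamily $R'$. Concretely, fix a small constant $\alpha\in(0,1)$ satisfying $\alpha(2+\rho)\le\rho/5$ (for instance $\alpha=\rho/(5(2+\rho))$, which is positive since $\rho\in(0,1)$), set $\e'=(1-\alpha)\e\in(0,\e_0)$, and invoke \eqref{eq.repsilon} at the parameter $\e'$.

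For each $t\in R(\e')$, density of $R'$ in $(R,\gamma)$ lets us choose $\phi(t)\in R'$ with $\gamma(\phi(t),t)<\alpha\e$; define $R'(\e):=\phi(R(\e'))\subseteq R'$, so that $K_r\subseteq\G$ for every $r\in R'(\e)$. The $\e$-density property follows from the triangle inequality: given $r\in R$, first pick $s\in R(\e')$ with $\gamma(s,r)<\e'$ and then estimate
\[
\gamma(\phi(s),r)\le\gamma(\phi(s),s)+\gamma(s,r)<\alpha\e+(1-\alpha)\e=\e.
\]

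For finiteness, suppose $B\subseteq Y$ has diameter at most $\tfrac{4}{5}\rho\e$ and some $r=\phi(t)\in R'(\e)$ meets $B$. The hypothesis $\mathcal{H}(K_{\phi(t)},K_t)\le\gamma(\phi(t),t)<\alpha\e$ forces $K_t$ to meet the open $\alpha\e$-neighbourhood $B^+$ of $B$, and $B^+$ has diameter at most $\tfrac{4}{5}\rho\e+2\alpha\e\le\rho\e'$ by the choice of $\alpha$. The second bullet of \eqref{eq.repsilon} at parameter $\e'$ produces a finite upper bound for $\{t\in R(\e'):K_t\cap B^+\ne\emptyset\}$, and selecting an arbitrary $\phi$-preimage for each $r\in F^B(\e)$ injects $F^B(\e)$ into this finite set, so $F^B(\e)$ is finite.

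The only algebraic content is the inequality $\tfrac{4}{5}\rho+2\alpha\le\rho(1-\alpha)$, which rearranges to $\alpha(2+\rho)\le\rho/5$ and is trivially satisfied for small $\alpha$; I do not anticipate any substantive obstacle beyond balancing the two scales, namely the enlargement of $B$ by $\alpha\e$ versus the slack $(1-\alpha)\e$ left in the density threshold of $R(\e')$.
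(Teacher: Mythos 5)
Your proof is correct and follows essentially the same route as the paper: shrink the density parameter, take the family $R(\e')$ from \eqref{eq.repsilon}, replace each of its members by a $\gamma$-close element of the dense subfamily $R'$, and transfer the intersection-with-$B$ condition back through the Hausdorff-distance bound to an enlarged set of controlled diameter. Your constant bookkeeping (requiring $\frac45\rho\e+2\alpha\e\le\rho\e'$) is in fact slightly more careful than the paper's, which takes $\e'=4\e/5$ and perturbation $\rho\e/10$ and then applies the finiteness clause of \eqref{eq.repsilon} at parameter $4\e/5$ to a set of diameter $\rho\e$ rather than the required $\frac45\rho\e$ --- a harmless slip that your choice of $\alpha$ avoids.
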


\begin{proof}
For each $s \in R$ take $t_s \in R'$ with $\gamma(t_s,s) < \rho\e/10$, using the density of $R'$. Set
$$
R'(\e) = \{t_s :
s \in R(4\e/5)\}.
$$
It is clear that $K_r \subseteq \G$ for every $r \in R'(\e)$ and that for every $r \in R$ we can find $t \in R'(\e)$ with $\gamma(t,r) < 4\e/5+\rho\e/10 < \e$.

Now if $t \in F^B(\e)$ then, writing $t = t_s$ with $s \in R(4\e/5)$, we see
 from $\gamma(t_s,s)< \rho\e/10$
and \eqref{eq.metricproperty}
that $K_{s}$ intersects $\overline{B}_{\rho\e/10}(B)$; this set has diameter at most $\rho \e$ so the set $F^B(\e)$ is finite by \eqref{eq.repsilon}.
\end{proof}

We now define the set
\begin{equation}
\label{defT}
\mT = \{(r,w,\alpha) \in R \times (0,\e_0) \times (0,\infty) \text{ such that }K_r \subseteq \G \text{ and }w \leq \alpha\}.
\end{equation}
Here $w\in(0,\e_0)$ denotes the width of the neighbourhood
$T=\ballcl{K_r}{w}$ around $K_r$;
as we mentioned earlier in Remark~\ref{rem.wedge} our main example is
the case when $K_r$ is a
wedge,
then $T$ is an angled tube around
$K_r$. A slightly bigger neighbourhood $\ballcl{K_r}{\alpha}$,
defined by the third parameter, is considered as a neighbourhood of the
tube $T$ just constructed, in which we plan to choose smaller tubes that
approximate $T$. Therefore each element $(r,w,\alpha)\in\mT$ presents
a tube $\ballcl{K_r}{w}$
with some ``safe'' neighbourhood $\ballcl{K_r}{\alpha}$.
For convenience, we will use these terms
even in the general case when $K_r$ are arbitrary compacts;
we will also refer
to elements $(r,w,\alpha)\in\mT$ as tube triples.

For fixed $r_0\in R'$ choose  $0<w_0<\alpha_0<\e_0$ so that
\begin{equation}
\label{initialR}
R_0 = \{(r_0,w_0,\alpha_0)\}\subseteq\mT.
\end{equation}
We shall now construct, for each $k \geq 1$, a set $R_k \subseteq \mT$ inductively by adding, for every $(r,w,\alpha) \in R_l$ where $l < k$, a collection $R_{k,l} = R_{k,l}(r,w,\alpha)$ of tube triples $(t,v,\beta)\in\mT$ with $\ballcl{K_t}{v} \subseteq \G_k$ such that the collection $(K_t)_{(t,v,\beta) \in R_{k,l}}$ well approximates the collection of all compacts $(K_s)_{s \in R}$ when restricted to
the ``safe'' neighbourhood $\ballcl{K_r}{\alpha}$.
First let
\begin{equation}
\label{smallconstants}
r_{k,l} \in (0,\rho/10)
\end{equation}
for each $0 \leq l < k$,
where $\rho\in(0,1)$ is the number fixed in the beginning of the present section. Later,
in \eqref{newstipulation}, we will impose additional restrictions on
$(r_{k,l})$; however
Lemmas~\ref{fromltok}, \ref{finite} and \ref{towardsclosedness}
we prove up to that point
are valid for any $r_{k,l} \in (0,\rho/10)$.

\begin{lemma}
\label{fromltok}
If $0 \leq l < k$ and $(r,w,\alpha) \in \mT$ then there is a set
$$R_{k,l} = R_{k,l}(r,w,\alpha) \subseteq \mT$$
such that
\begin{enumerate}
\item[(1)]
for every $s \in R$ with $K_{s} \subseteq \overline{B}_{\alpha}(K_r)$ there exists $(t,v,\beta) \in R_{k,l}$ such that
$$\gamma(t,s) \leq \frac{10}{\rho}r_{k,l}w,$$
\item[(2)]
if $(t,v,\beta) \in R_{k,l}$ then $\beta = r_{k,l}w  < \alpha/10$ and $v < \e_0/k$,
\item[(3)]
if $(t,v,\beta) \in R_{k,l}$ then $\overline{B}_{v}(K_{t}) \subseteq \G_k$ and $K_{t} \subseteq \overline{B}_{2\alpha}(K_r)$,
\item[(4)]
if $B \subseteq Y$ has diameter at most $8r_{k,l}w$ then the set
\begin{equation*}
F = F_{k,l}^B(r,w,\alpha)
\end{equation*}
of all $(t,v,\beta) \in R_{k,l}$ such that $K_t$ intersects $B$, is finite,
\item[(5)]
there exists $v > 0$ such that $(r,v,r_{k,l}w) \in R_{k,l}$.
\end{enumerate}
\end{lemma}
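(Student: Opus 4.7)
The plan is to invoke the density/finiteness lemma established just above for the parameter $\e = \frac{10}{\rho}r_{k,l}w$, which lies in $(0,\e_0)$ because $r_{k,l} < \rho/10$ and $w < \e_0$. I would then sift the resulting family $R'(\e)$ down to those indices whose compact actually lives in the safe neighbourhood $\ballcl{K_r}{2\alpha}$, and adjoin $r$ by hand so that property (5) holds for free.

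Concretely, I would set
$$
R_{k,l}^{*} = \{t \in R'(\e) : K_t \subseteq \ballcl{K_r}{2\alpha}\} \cup \{r\}, \qquad \beta = r_{k,l}w,
$$
and, using that each $K_t$ is a compact subset of the open set $\G \subseteq \G_k$, choose $v(t)>0$ small enough that $\ballcl{K_t}{v(t)} \subseteq \G_k$, $v(t) < \e_0/k$, and $v(t) \leq \beta$; then declare $R_{k,l} = \{(t,v(t),\beta) : t \in R_{k,l}^{*}\}$. Membership in $\mT$ and properties (2), (3), (5) are then immediate, using $r_{k,l} < \rho/10 < 1/10$ and $w \leq \alpha$ to obtain $\beta < \alpha/10$. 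For (1), the density clause yields $t \in R'(\e)$ with $\gamma(t,s) < \e$; the hypothesis $\mathcal{H}(K_t,K_s) \leq \gamma(t,s)$ together with $K_s \subseteq \ballcl{K_r}{\alpha}$ and $\e \leq w \leq \alpha$ then forces $K_t \subseteq \ballcl{K_r}{\alpha+\e} \subseteq \ballcl{K_r}{2\alpha}$, so $t \in R_{k,l}^{*}$ and $(t,v(t),\beta)$ witnesses (1). For (4), a set $B$ of diameter at most $8r_{k,l}w = \frac{4}{5}\rho\e$ meets only finitely many $K_t$ with $t \in R'(\e)$ by the finiteness clause, and the adjoined triple at $r$ contributes at most one further element, so $F_{k,l}^{B}(r,w,\alpha)$ is finite.

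No geometric obstacle is anticipated; the argument is essentially bookkeeping driven by the earlier lemma. The main thing to get right is matching numerical constants: one must verify at the outset that $\e = \frac{10}{\rho}r_{k,l}w$ lies in $(0,\e_0)$ so that the earlier lemma is applicable, and that $\e \leq \alpha$ so that the enlargement from $\ballcl{K_r}{\alpha}$ to $\ballcl{K_r}{2\alpha}$ actually absorbs the selected $K_t$. Both follow immediately from the standing assumption $r_{k,l} \in (0,\rho/10)$ and the constraint $w \leq \alpha$ built into the definition of $\mT$.
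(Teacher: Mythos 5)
Your proposal is correct and follows essentially the same route as the paper: same choice of $\e = \frac{10}{\rho}r_{k,l}w$, same appeal to the density/finiteness properties of $R'(\e)$, same restriction to compacts inside $\ballcl{K_r}{2\alpha}$ with $r$ adjoined for (5), and the same constant-checking ($\e < w \leq \alpha$, $8r_{k,l}w = \tfrac{4}{5}\rho\e$). Nothing further is needed.
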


\begin{proof}
For each $t \in R$ with $K_t \subseteq \G$ we can pick $v_t \in (0,\e_0/k)$ such that $v_t \leq r_{k,l}w$ and
$$\overline{B}_{v_t}(K_{t}) \subseteq \G_k,$$
as $K_{t} \subseteq \G \subseteq \G_k$, $K_t$ is compact and $\G_k$ is open. Now let
$$
\e = \frac{10}{\rho}r_{k,l}w.
$$

Note that $\e < w < \e_0$ from~\eqref{smallconstants} and~\eqref{defT} and that for any $t \in R'(\e) \cup \{r\}$ we have $K_t \subseteq \G$. So we may set
\begin{equation*}
R_{k,l} = \{(t,v_t,r_{k,l}w) : t \in R'(\e) \cup \{r\}\text{ is such that }
K_t \subseteq \overline{B}_{2\alpha}(K_r)\}.
\end{equation*}
Observe that $R_{k,l} \subseteq \mT$, using the definition of $v_t$.

To see item
(1) of the lemma, for $s \in R$ with $K_s \subseteq \overline{B}_{\alpha}(K_r)$ we may pick $t \in R'(\e)$ with $\gamma(t,s) < \e$. Then $\gamma(t,s) \leq w \leq \alpha$ so that $K_t \subseteq \overline{B}_{\alpha}(K_s)$ using~\eqref{eq.metricproperty}. It then follows that $K_t \subseteq \overline{B}_{2\alpha}(K_r)$ so that $(t,v_t,r_{k,l}w) \in R_{k,l}$.

Items (2) and (3) are immediate.

For (4) note that if $(t,v_t,r_{k,l}w) \in F$ then as $t \in R'(\e) \cup \{r\}$ and the set $B$ has diameter at most $\frac{4}{5} \rho \e$ we have
$$t \in F^B(\e) \cup \{r\};$$
see~\eqref{firstfin}. As this set is finite then so is $F$.

Finally item (5) is immediate with $v = v_r$.
\end{proof}

Recall from~\eqref{initialR} that we have defined $R_0 \subseteq \mT$. Now for $k \geq 1$ define $R_k \subseteq \mT$ by the recursion
\begin{equation}
\label{recursiveR}
R_k = \bigcup_{l=0}^{k-1}\bigcup_{(r,w,\alpha) \in R_l} R_{k,l}(r,w,\alpha).
\end{equation}
Note that for any $(t,v,\beta) \in R_k$ we have
\begin{equation}
\label{moreincs}
K_t \subseteq \G \text{ and }\overline{B}_{v}(K_{t}) \subseteq \G_k
\end{equation}
and
\begin{equation}
\label{worthpointingout}
0 < v \leq \min \left(\beta,\frac{\e_0}{k}\right)
\end{equation}
using~\eqref{defT} and Lemma~\ref{fromltok}, (2) and (3).

Next lemma proves that the collection of tube triples $R_k$
has some local finiteness in its structure; we will use this property later
to prove that if we consider unions of all tubes on each level and
then intersect these unions up to a certain level then the resulting set
is closed, see Definition~\ref{defMinf} and Lemma~\ref{towardsclosedness}.
\begin{lemma}
\label{finite}
If $y \in Y$ and $k \geq 0$ there exists $\delta_k = \delta_k(y) > 0$ such that
the set
\begin{equation*}
F_k=F_k(y) := \{(r,w,\alpha) \in R_k \text{ such that } d(y,K_r) \leq \delta_k + 3\alpha\}
\end{equation*}
is finite.
\end{lemma}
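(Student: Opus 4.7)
The plan is to prove the statement by induction on $k$. The base case $k=0$ is immediate, since $R_0 = \{(r_0, w_0, \alpha_0)\}$ is a singleton by \eqref{initialR}, so $F_0(y)$ has at most one element regardless of the positive $\delta_0$ chosen.

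For the inductive step I fix $y \in Y$ and suppose $\delta_l = \delta_l(y) > 0$ has already been chosen with $F_l(y)$ finite for every $l < k$. The recursion \eqref{recursiveR} allows $F_k(y)$ to be decomposed into contributions from the finitely many parent triples in $\bigcup_{l<k} F_l(y)$. Concretely, I would choose $\delta_k > 0$ small enough that $\delta_k \leq \delta_l$ and $\delta_k \leq r_{k,l} w$ for every $l < k$ and every $(r,w,\alpha) \in F_l(y)$. The collection of such quadruples $(l,r,w,\alpha)$ is finite by the inductive hypothesis and each $r_{k,l} w$ is strictly positive, so such a $\delta_k$ exists.

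Now given any $(t,v,\beta) \in F_k(y)$, the recursion \eqref{recursiveR} places it in $R_{k,l}(r,w,\alpha)$ for some $l < k$ and $(r,w,\alpha) \in R_l$, and Lemma~\ref{fromltok}(2)--(3) yield $\beta = r_{k,l} w$ together with $K_t \subseteq \ballcl{K_r}{2\alpha}$. The latter inclusion gives
\[
d(y, K_r) \leq d(y, K_t) + 2\alpha \leq \delta_k + 3 r_{k,l} w + 2\alpha,
\]
and using $r_{k,l} < 1/10$ from \eqref{smallconstants}, $w \leq \alpha$ from \eqref{defT}, and $\delta_k \leq \delta_l$, the right hand side is at most $\delta_l + 3\alpha$. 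Hence the parent triple $(r,w,\alpha)$ necessarily lies in $F_l(y)$, so that $F_k(y)$ is contained in the union
\[
\bigcup_{l=0}^{k-1} \bigcup_{(r,w,\alpha) \in F_l(y)} \{(t,v,\beta) \in R_{k,l}(r,w,\alpha) : d(y, K_t) \leq \delta_k + 3\beta\},
\]
whose outer indexing is finite.

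It remains to show each inner set is finite. For a fixed parent $(r,w,\alpha) \in F_l(y)$, any triple $(t,v,\beta)$ in the corresponding inner set has $K_t$ intersecting the closed ball $\ballcl{y}{\delta_k + 3 r_{k,l} w}$, which has diameter at most $2\delta_k + 6 r_{k,l} w$. The choice $\delta_k \leq r_{k,l} w$ forces this diameter to be bounded by $8 r_{k,l} w$, so Lemma~\ref{fromltok}(4) applies and delivers finiteness. The only subtle point is ensuring that the constraints defining $\delta_k$ are mutually compatible; this is exactly what the inductive finiteness of each $F_l(y)$ guarantees, as a positive minimum can always be taken over a finite family of positive quantities. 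I do not anticipate any genuine obstacle beyond this bookkeeping.
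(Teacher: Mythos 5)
Your proof is correct and follows essentially the same route as the paper: the same inductive choice of $\delta_k$ (dominated by $\delta_l$ and by $r_{k,l}w$ over the finitely many parents in $F_l(y)$), the same estimate showing each element of $F_k(y)$ has its parent in $F_l(y)$, and the same appeal to Lemma~\ref{fromltok}(4) with a ball of diameter at most $8r_{k,l}w$. No gaps.
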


\begin{proof}
Let $y \in Y$. For any $\delta_0 > 0$ we pick, the set $F_0 \subseteq R_0$ will be finite. Suppose now that $k \geq 1$ and we have picked $\delta_l > 0$ for every $0 \leq l < k$ such that $F_l$ is finite.

Pick $\delta_k > 0$ such that for every $l < k$ we have $\delta_k < \delta_l$ and, for any $(r,w,\alpha) \in F_l$, $\delta_k < r_{k,l}w$.
We shall show that $F_k$ is finite.

Suppose that $(t,v,\beta) \in F_k$. Note that $(t,v,\beta) \in R_{k,l}(r,w,\alpha)$ where $l < k$ and $(r,w,\alpha) \in R_l$, using~\eqref{recursiveR}. Note that $K_{t} \subseteq \overline{B}_{2\alpha}(K_r)$ by Lemma~\ref{fromltok}(3).
Hence
\begin{align*}
d(y,K_r) &\leq d(y,K_t) + 2\alpha\\
 &\leq \delta_k + 3\beta + 2\alpha\\
 &\leq \delta_l + 3\alpha
\end{align*}
using $\delta_k < \delta_l$ and $\beta =r_{k,l} w < \alpha/10$ from Lemma~\ref{fromltok}(2).
Hence $(r,w,\alpha) \in F_l$ and so $\delta_k < r_{k,l}w$. We get $d(y,K_t) \leq \delta_k + 3\beta < 4r_{k,l}w$ so that
$$K_{t} \cap \overline{B}_{4r_{k,l}w}(y) \neq \emptyset$$
and $(t,v,\beta) \in F_{k,l}^{\overline{B}_{4r_{k,l}w}(y)}(r,w,\alpha)$; see Lemma~\ref{fromltok}(4).

We conclude that
\begin{equation*}
F_{k} \subseteq \bigcup_{l=0}^{k-1}\bigcup_{(r,w,\alpha) \in F_l}F_{k,l}^{\overline{B}_{4r_{k,l}w}(y)}(r,w,\alpha),
\end{equation*}
which is finite by Lemma~\ref{fromltok}(4).

\end{proof}

\begin{definition}
\label{defMinf}
If $k \geq 1$, $\lambda \in [0,1]$ and $w > 0$ we define $M_{k}(\lambda,w)$ to be the set of $y \in Y$ such that there exist integers $n \geq 1$,
$0 = l_0 < l_1 < \dots < l_n = k$
and tube triples $(r_m,w_m,\alpha_m) \in R_{l_m}$ for $0 \leq m \leq n$ with
\begin{enumerate}
\item[(1)]
$(r_{m},w_{m},\alpha_{m}) \in R_{l_{m},l_{m-1}}(r_{m-1},w_{m-1},\alpha_{m-1})$ for $1 \leq m \leq n$,
\item[(2)]
$d(y,K_{r_m}) \leq \lambda \alpha_m$ for $0 \leq m \leq n$,
\item[(3)]
$d(y,K_{r_n}) \leq \lambda w_n$,
\item[(4)]
$w_n = w$.
\end{enumerate}

We then let
\begin{equation*}
M_{k}(\lambda) = \bigcup_{w > 0}M_{k}(\lambda,w).
\end{equation*}
\end{definition}

\begin{remark*}
Note that Definition~\ref{defMinf}(3)
implies that
$M_k(\lambda)$ is a subset of  the union
$\bigcup \ballcl{K_r}{\lambda w}$, where the union is taken
over the collection of all
tube triples
$(r,w,\alpha)\in R_k$. Since each of those tubes is inside $\G_k$
by \eqref{moreincs},  we conclude
$M_{k}(\lambda) \subseteq \G_k$.
Further from \eqref{initialR}, \eqref{recursiveR}, Lemma~\ref{fromltok}(5), Definition~\ref{defMinf}(2) and \eqref{moreincs},
\begin{equation}
\label{r0alwaysin}
K_{r_0} \subseteq M_k(\lambda)
\end{equation}
for all $k \geq 1$ and $\lambda \in [0,1]$. Finally if $M_{k}(\lambda,w) \neq \emptyset$ then by Lemma~\ref{fromltok}(2),
\begin{equation}
\label{boundonw}
w < \e_0/k.
\end{equation}
\end{remark*}
\begin{lemma}
\label{towardsclosedness}
For any $k \geq 1$ and $\lambda \in [0,1]$, the set $M_k(\lambda)$
is a closed subset of $(Y,d)$.
\end{lemma}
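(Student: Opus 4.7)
The plan is to prove closedness via a diagonal subsequence extraction, exploiting the local finiteness of the tube collection established in Lemma~\ref{finite}. Suppose $(y_j)_{j \geq 1}$ is a sequence in $M_k(\lambda)$ with $y_j \to y \in Y$; I would like to produce a chain witnessing $y \in M_k(\lambda)$.

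First I would, for each $j$, fix a witnessing chain $(r^j_m, w^j_m, \alpha^j_m)_{0 \leq m \leq n_j}$ with levels $0 = l^j_0 < l^j_1 < \ldots < l^j_{n_j} = k$ as in Definition~\ref{defMinf}. Since $n_j \leq k$, and the number of strictly increasing sequences in $\{0,1,\dots,k\}$ starting at $0$ and ending at $k$ is finite, a pigeonhole argument lets me pass to a subsequence on which $n_j = n$ and $(l^j_m)_{0 \leq m \leq n} = (l_m)_{0 \leq m \leq n}$ are constant.

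Next I would inductively extract nested subsequences in $m$, starting at $m = 0$ (where the triple is forced to equal the single element of $R_0$ from~\eqref{initialR}) and proceeding up to $m = n$. At step $m$ of the induction, I would use the estimate
$$
d(y, K_{r^j_m}) \leq d(y, y_j) + d(y_j, K_{r^j_m}) \leq d(y, y_j) + \lambda \alpha^j_m \leq d(y, y_j) + 3\alpha^j_m.
$$
Once $j$ is large enough that $d(y, y_j) \leq \delta_{l_m}(y)$, where $\delta_{l_m}(y)$ is the constant supplied by Lemma~\ref{finite} applied at level $l_m$, this inequality places $(r^j_m, w^j_m, \alpha^j_m)$ in the finite set $F_{l_m}(y)$. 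I can therefore pass to a further subsequence on which this triple is constant, say equal to $(r_m, w_m, \alpha_m)$, without disturbing the constancy already obtained at levels $m' < m$.

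Finally, with the full chain $(r_m, w_m, \alpha_m)_{0 \leq m \leq n}$ constant along the extracted subsequence, Definition~\ref{defMinf}(1) and~(4) hold automatically (they are properties of the chain, not of $y_j$), while (2) and (3) pass to the limit because $z \mapsto d(z, K_{r_m})$ is continuous (each $K_{r_m}$ being compact) and the inequalities $d(y_j, K_{r_m}) \leq \lambda \alpha_m$ and $d(y_j, K_{r_n}) \leq \lambda w_n$ are non-strict. This yields $y \in M_k(\lambda, w_n) \subseteq M_k(\lambda)$, completing the proof. The main subtlety is ensuring Lemma~\ref{finite} can be invoked uniformly over the chain; this is handled by the trivial bound $\lambda \alpha^j_m \leq 3 \alpha^j_m$, which puts the estimate in precisely the form demanded by the definition of $F_{l_m}(y)$.
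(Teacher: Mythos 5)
Your proposal is correct and follows essentially the same route as the paper: fix witnessing chains, use the bound $n_j\le k$ and the finitely many possible level sequences to stabilise $n$ and $(l_m)$, then invoke the local finiteness from Lemma~\ref{finite} (via $d(y,K_{r^j_m})\le d(y,y_j)+\lambda\alpha^j_m\le\delta_{l_m}+3\alpha^j_m$ for large $j$) to stabilise each triple along a further subsequence, and finally pass the non-strict inequalities in Definition~\ref{defMinf}(2)--(3) to the limit. The paper's proof is the same iterated subsequence extraction, so no further comment is needed.
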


\begin{proof}
Suppose that $\E{y}{i} \in M_k(\lambda)$ with $\E{y}{i} \rightarrow y \in Y$. It suffices to show that $y \in M_k(\lambda)$.

For each $i \geq 1$ we have $\E{y}{i} \in M_k(\lambda)$; therefore we can find $\E{n}{i} \geq 1$,
$$0 = \E{l_0}{i} < \dots < \E{l_{\E{n}{i}}}{i} = k$$
and
$$(\E{r_m}{i},\E{w_m}{i},\E{\alpha_m}{i}) \in R_{\E{l_m}{i}}$$
for
$0 \leq m \leq \E{n}{i}$ such that the conditions in
Definition~\ref{defMinf}(1)---(3)
are satisfied:
\begin{align}
\label{dunnoone}
&\text{$(\E{r}{i}_{m},\E{w}{i}_{m},\E{\alpha}{i}_{m}) \in R_{\E{l}{i}_{m},\E{l}{i}_{m-1}}(\E{r}{i}_{m-1},\E{w}{i}_{m-1},\E{\alpha}{i}_{m-1})$ for $1 \leq m \leq \E{n}{i}$}\\
\label{dunnotwo}
&\text{$d(\E{y}{i},K_{\E{r}{i}_m}) \leq \lambda \E{\alpha}{i}_m$ for $0 \leq m \leq \E{n}{i}$}\\
\label{dunnothree}
&\text{$d(\E{y}{i},K_{\E{r}{i}_{\E{n}{i}}}) \leq \lambda \E{w}{i}_{\E{n}{i}}$.}
\end{align}
As $1 \leq \E{n}{i} \leq k$ we may assume, passing to a subsequence if necessary, that $\E{n}{i} = n$ is constant. But then as $0 \leq \E{l_m}{i} \leq k$ we may assume, passing to another subsequence, that $\E{l_m}{i} = l_m$ is constant for each $0 \leq m \leq n$ with $0 = l_0 < l_1 < \dots < l_n = k$.

Fixing $m$ then as $d(y,\E{y}{i}) \rightarrow 0$, $\lambda \leq 1$ and
\begin{equation*}
d(y,K_{\E{r}{i}_m}) \leq d(y,\E{y}{i}) + \lambda \E{\alpha}{i}_{m},
\end{equation*}
from~\eqref{dunnotwo}, we have $(\E{r_m}{i},\E{w_m}{i},\E{\alpha_m}{i}) \in F_{l_m}(y)$ for $i$ sufficiently high; see Lemma~\ref{finite}. As this set is finite we can assume, passing to another subsequence, that
\begin{equation*}
(\E{r_m}{i},\E{w_m}{i},\E{\alpha_m}{i}) = (r_m,w_m,\alpha_m)
\end{equation*}
is constant for each $0 \leq m \leq n$, with $(r_m,w_m,\alpha_m) \in R_{l_m}$. Further from \eqref{dunnoone}---\eqref{dunnothree} we have
\begin{itemize}
\item
$(r_{m},w_{m},\alpha_{m}) \in R_{l_{m},l_{m-1}}(r_{m-1},w_{m-1},\alpha_{m-1})$ for $1 \leq m \leq n$
\item
$d(\E{y}{i},K_{r_m}) \leq \lambda \alpha_m$ for $0 \leq m \leq n$
\item
$d(\E{y}{i},K_{r_n}) \leq \lambda w_n$.
\end{itemize}
Taking the $i \rightarrow \infty$ limit and using
$\E{y}{i} \rightarrow y$ we obtain
\begin{itemize}
\item
$d(y,K_{r_m}) \leq \lambda \alpha_m$ for $0 \leq m \leq n$
\item
$d(y,K_{r_n}) \leq \lambda w_n$,
\end{itemize}
so that $y \in M_{k}(\lambda)$.
\end{proof}

Up to this point we have let $r_{k,l} \in (0,\rho/10)$ be arbitrary; see~\eqref{smallconstants}. We now further stipulate that if $0 \leq l < l' \leq k$ then we have
\begin{equation}
\label{newstipulation}
r_{k+1,k} \leq \frac{1}{k} \qquad\text{and}\qquad r_{k+1,l} \leq \frac{1}{k}r_{l',l}.
\end{equation}

We now come to the crucial lemma. It proves that if we consider a point $y$ is
in $M_k(\lambda,w)$ and $\lambda'>\lambda$, then the whole
$(\lambda'-\lambda)w$-neighbourhood of $y$ is inside $M_k(\lambda',w)$.
If, however, we want to find compacts $K_t$ close to $y$ of bigger size,
$\delta>(\lambda'-\lambda)w/2$, we can accomplish this as long as we agree to consider
tube sets constructed on subsequent levels.
\begin{lemma}
\label{thehardwork}
Suppose $k \geq 1$, $0 \leq \lambda < \lambda+\psi \leq 1$, $w > 0$,
$\e\in(0,1)$ and $y \in M_k(\lambda,w)$. Then
\begin{enumerate}
\item[(1)]
$\overline{B}_{\psi w}(y) \subseteq M_k(\lambda+\psi,w)$,
\item[(2)]
if $2\delta \in (\psi w,\psi \alpha_0)$ and
$20/(\rho \psi k)<\e<1$
then for each $s \in R$ with $K_s \subseteq \overline{B}_{\delta}(y)$ there exists $t \in R$ with $\gamma(t,s) < \e \delta$ and
$K_t \subseteq M_{k+j}(\lambda+\psi)$ for all $j \geq 1$.
\end{enumerate}
\end{lemma}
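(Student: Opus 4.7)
Part (1) is a direct triangle-inequality translation. Given $y\in M_k(\lambda,w)$ with witnessing chain $(r_m,w_m,\alpha_m)_{0\le m\le n}$ at levels $l_m$, I would reuse the same chain for every $z$ with $d(z,y)\le\psi w$ to verify the defining inequalities of Definition~\ref{defMinf} at level $\lambda+\psi$. The key point is that $w=w_n\le\alpha_n$ (since $(r_n,w_n,\alpha_n)\in\mT$) and the sequence $\alpha_m$ is strictly decreasing (Lemma~\ref{fromltok}(2)), hence $w\le\alpha_m$ for every $m$; so $d(z,K_{r_m})\le\psi w+\lambda\alpha_m\le(\lambda+\psi)\alpha_m$ and $d(z,K_{r_n})\le(\lambda+\psi)w$.

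For part (2) the plan is to identify a \emph{critical level} $m^*$ on the chain for $y$, approximate $K_s$ at level $k+1$ using the tube above $(r_{m^*},w_{m^*},\alpha_{m^*})$, and then propagate the result to all deeper levels via Lemma~\ref{fromltok}(5). Let $m^*\in\{0,\ldots,n\}$ be the largest index with $\psi\alpha_{m^*}\ge 2\delta$; such $m^*$ exists because $2\delta<\psi\alpha_0$. The bound $\delta\le\psi\alpha_{m^*}/2$ together with $\lambda+\psi\le 1$ gives
$$
d(z,K_{r_{m^*}})\le\delta+\lambda\alpha_{m^*}\le\alpha_{m^*}\qquad\text{for every }z\in K_s,
$$
so $K_s\subseteq\overline{B}_{\alpha_{m^*}}(K_{r_{m^*}})$. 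Lemma~\ref{fromltok}(1) (with level $k+1$ attached to $l_{m^*}$) then produces $(t,v,\beta)\in R_{k+1,l_{m^*}}(r_{m^*},w_{m^*},\alpha_{m^*})$ with $\gamma(t,s)\le(10/\rho)\,r_{k+1,l_{m^*}}w_{m^*}$.

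The estimate $\gamma(t,s)<\e\delta$ comes from~\eqref{newstipulation} through two sub-cases. If $m^*<n$ then $r_{k+1,l_{m^*}}\le k^{-1}r_{l_{m^*+1},l_{m^*}}$, so $r_{k+1,l_{m^*}}w_{m^*}\le\alpha_{m^*+1}/k<2\delta/(k\psi)$, where the last inequality uses the maximality of $m^*$. If $m^*=n$ then $l_{m^*}=k$ and $r_{k+1,k}\le 1/k$, so $r_{k+1,k}w_n\le w/k<2\delta/(k\psi)$. In either case the hypothesis $\e>20/(\rho\psi k)$ gives $\gamma(t,s)<\e\delta$. Next, I would apply Lemma~\ref{fromltok}(5) repeatedly to produce, for each $j\ge 2$, a triple $(t,v^{(j)},\beta^{(j)})\in R_{k+j,k+j-1}(t,v^{(j-1)},\beta^{(j-1)})$ (with $v^{(1)}=v$, $\beta^{(1)}=\beta$), keeping the index $t$ itself fixed at every level.

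Finally, to verify $K_t\subseteq M_{k+j}(\lambda+\psi)$, I would fix $z\in K_t$ and $j\ge 1$ and consider the concatenated chain
$$
(r_0,w_0,\alpha_0),\ldots,(r_{m^*},w_{m^*},\alpha_{m^*}),(t,v,\beta),(t,v^{(2)},\beta^{(2)}),\ldots,(t,v^{(j)},\beta^{(j)})
$$
at levels $0<l_1<\cdots<l_{m^*}<k+1<\cdots<k+j$. The $R_{*,*}$-relations are immediate by construction. For the spatial conditions, from $\mathcal{H}(K_t,K_s)\le\gamma(t,s)<\e\delta$ and $K_s\subseteq\overline{B}_\delta(y)$ one has $d(z,y)\le(1+\e)\delta$, whence $d(z,K_{r_m})\le(1+\e)\delta+\lambda\alpha_m<(\lambda+\psi)\alpha_m$ for $m\le m^*$ (using $2\delta\le\psi\alpha_m$ and $\e<1$); below the break $d(z,K_t)=0$ trivialises everything. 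The hardest part of the argument is the choice of $m^*$: it simultaneously keeps the old $r_m$-distances controllable for $m\le m^*$ and, via~\eqref{newstipulation}, forces the new density constant $r_{k+1,l_{m^*}}w_{m^*}$ small enough to beat $\e\delta$.
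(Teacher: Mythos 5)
Your proposal is correct and follows essentially the same route as the paper's proof: part (1) by translating the same witnessing chain using $w=w_n\le\alpha_n\le\alpha_m$, and part (2) by locating the critical level $m^*$ with $\psi\alpha_{m^*+1}<2\delta\le\psi\alpha_{m^*}$ (or $m^*=n$), invoking Lemma~\ref{fromltok}(1) there, bounding $\gamma(t,s)$ via~\eqref{newstipulation} and~\eqref{eq.alineq}, and extending the chain to all deeper levels with Lemma~\ref{fromltok}(5). The case split and all the key estimates match the paper's argument.
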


\begin{proof}
From Definition~\ref{defMinf} we can find integers $n \geq 1$,
\begin{equation*}
0 = l_0 < l_1 < \dots < l_n = k
\end{equation*}
and tube triples
$(r_m,w_m,\alpha_m) \in R_{l_m}$ for $0 \leq m \leq n$ with
\begin{align}
\label{aginone}
&\text{$(r_{m},w_{m},\alpha_{m}) \in R_{l_{m},l_{m-1}}(r_{m-1},w_{m-1},\alpha_{m-1})$ for $1 \leq m \leq n$}\\
\label{agintwo}
&\text{$d(y,K_{r_m}) \leq \lambda \alpha_m$ for $0 \leq m \leq n$}\\
\label{aginthree}
&\text{$d(y,K_{r_n}) \leq \lambda w_n$}\\
\label{aginfour}
&\text{$w_n = w$.}
\end{align}
Note that
\begin{equation}
\label{eq.alineq}
\alpha_m = r_{l_{m},l_{m-1}}w_{m-1} < \alpha_{m-1}
\end{equation}
for each $1 \leq m \leq n$ by Lemma~\ref{fromltok}(2).

To establish (1)
of the present lemma, suppose $d(y',y) \leq \psi w$; then from \eqref{agintwo} and \eqref{aginthree},
\begin{align*}
&\text{$d(y',K_{r_m}) \leq \lambda \alpha_m + \psi w$ for $0 \leq m \leq n$}\\
&\text{$d(y',K_{r_n}) \leq \lambda w_n + \psi w$.}
\end{align*}
Using~\eqref{worthpointingout},~\eqref{aginfour} and~\eqref{eq.alineq} we have $w = w_n \leq \alpha_n \leq \alpha_m$ so that
\begin{align*}
&\text{$d(y',K_{r_m}) \leq (\lambda+\psi) \alpha_m$ for $0 \leq m \leq n$}\\
&\text{$d(y',K_{r_n}) \leq (\lambda+\psi) w_n$;}
\end{align*}
combining these with~\eqref{aginone} and~\eqref{aginfour} we get $y' \in M_{k}(\lambda+\psi,w)$, as required.

We now turn to (2).
We claim that we can find $m$ with $0 \leq m \leq n$ and
\begin{equation}
\label{crittriple}
(t,w,\alpha) \in R_{k+1,l_{m}}(r_m,w_m,\alpha_m)
\end{equation}
where $2\delta \leq \psi \alpha_m$ and $\gamma(t,s) < \e \delta$.

To see this suffices, note first that as $\mathcal{H}(K_t,K_s) \leq \gamma(t,s) < \delta$, using $\e \leq 1$, we have
\begin{equation}
\label{incs}
K_{t} \subseteq \overline{B}_{\delta}(K_s) \subseteq \overline{B}_{2\delta}(y) \subseteq \overline{B}_{\psi \alpha_m}(y),
\end{equation}
where we have also used $2\delta \leq \psi \alpha_m$ from the claim to be proved
and $K_s \subseteq \ballcl{y}{\delta}$ from the
hypothesis of (2).

Now let $l'_{j} = l_{j}$ and $(r'_j,w'_j,\alpha'_j) = (r_j,w_j,\alpha_j)$ for $j \leq m$ and $l_{m+j}' = k+j$ for $j \geq 1$ and, using~\eqref{crittriple}
for $(r_{m+1}',w_{m+1}',\alpha_{m+1}')$ and Lemma~\ref{fromltok}(5), pick inductively
$$(r_{m+j}',w_{m+j}',\alpha_{m+j}') \in R_{l_{m+j}',l_{m+j-1}'}(r_{m+j-1}',w_{m+j-1}',\alpha_{m+j-1}')$$
for each $j \geq 1$, with $r'_{m+j} = t$. Then for any $y' \in K_{t}$, as
$$
d(y',K_{r_j}) \leq d(y,K_{r_{j}})+\psi \alpha_m \leq (\lambda+\psi)\alpha'_j
$$
for $j \leq m$, using~\eqref{agintwo} and~\eqref{incs}, while $d(y',K_{r_{m+j}'}) = 0$ for $j \geq 1$
from $y'\in K_t=K_{r_{m+j}'}$, we have $y' \in M_{k+j}(\lambda+\psi)$ for $j\geq 1$ as required.

We now establish the claim. Suppose first that $2\delta \leq \psi \alpha_n$. Then as
$$K_s \subseteq \overline{B}_{\delta}(y) \subseteq \overline{B}_{\lambda \alpha_n + \delta}(K_{r_n}) \subseteq \overline{B}_{\alpha_n}(K_{r_n}),$$
using~\eqref{agintwo}, we may pick, by Lemma~\ref{fromltok}(1),
$(t,w,\alpha)\ \in R_{k+1,k}(r_n,w_n,\alpha_n)$ with
$$\gamma(t,s) \leq \frac{10}{\rho}r_{k+1,k}w_n \leq \frac{10}{\rho}\frac{1}{k} \frac{2\delta}{\psi} < \e \delta$$
using~\eqref{newstipulation} and $2\delta \in (\psi w_n,\psi)$. Thus we can satisfy the claim with $m = n$.

Suppose instead that $\psi \alpha_n < 2\delta$. As $2\delta \leq \psi \alpha_0$ we can find $m$ with
\begin{equation}
\label{criticalancestor}
\psi \alpha_{m+1} < 2\delta \leq \psi \alpha_{m}
\end{equation}
where $0 \leq m \leq n-1$.
Then as
$$K_s \subseteq \overline{B}_{\delta}(y) \subseteq \overline{B}_{\lambda \alpha_m + \delta}(K_{r_m}) \subseteq \overline{B}_{\alpha_m}(K_{r_m}),$$
we may pick, by Lemma~\ref{fromltok}(1),
$(t,w,\alpha) \in R_{k+1,l_m}(r_m,w_m,\alpha_m)$ with
\begin{equation*}
\gamma(t,s) \leq \frac{10}{\rho}r_{k+1,l_m}w_m \leq \frac{10}{\rho}\frac{1}{k} r_{l_{m+1},l_m}w_m = \frac{10}{\rho}\frac{1}{k} \alpha_{m+1} < \frac{10}{\rho}\frac{1}{k} \frac{2\delta}{\psi} < \e \delta
\end{equation*}
using~\eqref{newstipulation}
with $l_m<l_{m+1}\le k$,~\eqref{eq.alineq} and~\eqref{criticalancestor}. Thus the claim is satisfied.
\end{proof}

\noindent

\subsection{Proof of Theorem~\ref{th.passtoclosed}}

We are now ready to prove Theorem~\ref{th.passtoclosed}.

Assume $r_0$ used in \eqref{initialR} is the one given by hypothesis of Theorem~\ref{th.passtoclosed}.

Given $\lambda \in [0,1]$ we set
$$
\T_{\lambda} = \bigcap_{k=1}^{\infty} J_k(\lambda),
\quad\text{where}\quad
J_{k}(\lambda) = \bigcup_{k \leq n \leq (1+\lambda)k}M_n(\lambda).
$$
Note that as \eqref{r0alwaysin}  implies
$K_{r_0}\subseteq M_n(\lambda) \subseteq \G_n \subseteq \G_k$ for
$n \geq k$, we have $K_{r_0}\subseteq J_{k}(\lambda) \subseteq \G_k$ for every
$k \geq 1$ and hence
$K_{r_0}\subseteq\T_{\lambda} \subseteq \G$ for every $\lambda\in[0,1]$. Similarly as $M_k(\lambda)$ is closed by Lemma~\ref{towardsclosedness}, the set $J_{k}(\lambda)$ is also closed for every $k \geq 1$, and hence $\T_{\lambda}$ is closed for every $\lambda \in [0,1]$. We further note that if $0 \leq \lambda_1\le \lambda_2 \leq 1$ then as $M_k(\lambda_1) \subseteq M_k(\lambda_2)$ from
Definition~\ref{defMinf}, we have $J_k(\lambda_1) \subseteq J_k(\lambda_2)$ and hence we have $\T_{\lambda_1} \subseteq \T_{\lambda_2}$.

Assume $\eta>0$, $0\le\lambda'<\lambda\le1$ and $y\in\T_{\lambda'}$.
By the definitions of $\T_{\lambda'}$ and $M_{k}(\lambda')$ and the last part of Definition~\ref{defMinf}, there exists,
for each $k \geq 1$, an index $n_k$ with $k \leq n_k \leq (1+\lambda')k$ and
$w_k > 0$ such that
$y \in M_{n_k}(\lambda',w_k)$. Let $\psi = \lambda-\lambda' > 0$.

Pick $\delta_1 > 0$ with $2\delta_1 < \psi w_k$ for every
$k \leq 20/(\rho \psi \eta)$, where $\rho\in(0,1)$
is the number fixed in the beginning of the present section.
 Now suppose that
$\delta \in (0,\delta_1)$.
We need to show that if $K_s\subseteq\ballcl{y}{\delta}$
for some $s\in R$ then there exists $t\in R$ such that $K_t\subseteq T_\lambda$
and $\gamma(t,s)<\eta\delta$.
 Let  $k_0 \geq 1$ be the minimal index $k$ such that
$2\delta > \psi w_k$. Such $k_0$ exists as \eqref{boundonw} implies
$w_k\to0$.
Note that $k_0 > 20/(\rho \psi \eta)$.
In particular  $\psi k_0>1$ and so
$$k_0<n_{k_0}+1<(1+\lambda')k_0+\psi k_0=(1+\lambda)k_0.$$

By Lemma~\ref{thehardwork}(2) there exists $t\in R$ such that
$\gamma(s,t)<\eta\delta$ and
$K_t \subseteq M_j(\lambda)$ for every $j \geq n_{k_0}+1$,
so that $K_t\subseteq J_k(\lambda)$ for all $k\ge k_0$.
Note that $\gamma(t,s)<\eta\delta<\delta$ implies that
$K_t\subseteq\ballcl{y}{2\delta}\subseteq	\ballcl{y}{\psi w_k}$
for every $k<k_0$. By Lemma~\ref{thehardwork}(1)
we conclude $K_t\subseteq M_{n_k}(\lambda,w_k)$ for every $k<k_0$.

Hence $K_t \subseteq T_{\lambda}$ as required.

\qed

\bib
\end{document}